\newtheorem{theorem}{Theorem}[section]
\newtheorem{corollary}[theorem]{Corollary}
\newtheorem{lemma}[theorem]{Lemma}
\newtheorem{lemma and definition}[theorem]{Lemma and Definition}
\newtheorem{proposition}[theorem]{Proposition}
\newtheorem{definition}[theorem]{Definition}
\newtheorem{exam}[theorem]{Example}
\newtheorem{remark}[theorem]{Remark}
\newtheorem{discussion}[theorem]{Discussion}
\newtheorem{the construction}[theorem]{THE CONSTRUCTION}
\newtheorem{pd}[theorem]{Proposition-Definition}
\newcommand*\rel@kern[1]{\kern#1\dimexpr\macc@kerna}
\newcommand*\widebar[1]{%
	\begingroup
	\def\mathaccent##1##2{%
		\rel@kern{0.8}%
		\overline{\rel@kern{-0.8}\macc@nucleus\rel@kern{0.2}}%
		\rel@kern{-0.2}%
	}%
	\macc@depth\@ne
	\let\math@bgroup\@empty \let\math@egroup\macc@set@skewchar
	\mathsurround\z@ \frozen@everymath{\mathgroup\macc@group\relax}%
	\macc@set@skewchar\relax
	\let\mathaccentV\macc@nested@a
	\macc@nested@a\relax111{#1}%
	\endgroup
}
\newcommand{\field}[1]{\mathbb{#1}}
\newcommand{\N }{\field{N}}
\DeclareMathOperator{\im}{Im}
\DeclareMathOperator{\Ie}{I}
\DeclareMathOperator{\supp}{Supp}
\DeclareMathOperator{\crk}{crk}
\DeclareMathOperator{\coker}{Coker}
\DeclareMathOperator{\kerr}{Ker}
\DeclareMathOperator{\ass}{Ass}
\DeclareMathOperator{\ann}{Ann}
\DeclareMathOperator{\homm}{Hom}
\DeclareMathOperator{\spec}{Spec}
\title{A Generalization of the Katzman-Zhang Algorithm}
\author{Mehmet Yesil}
\address{Department of Pure Mathematics, The University of Sheffield, Hicks Building, Sheffield S3 7RH, UK}
\address{Department of Mathematics, Batman University, Batman, Turkey}
\email{mehmet-yesil@outlook.com}
\begin{document}
\maketitle

\begin{abstract}
	In this paper, we study the notion of special ideals. We generalize the results on those as well as the algorithm obtained for finite dimensional power series rings by Mordechai Katzman and Wenliang Zhang to finite dimensional polynomial rings. 
\end{abstract}

\section{Introduction}
Throughout this paper our general assumption on $R$ is to be a commutative Noetherian regular ring of prime characteristic $p$. Let $e$ be a positive integer. Let $f: R\rightarrow R$ be the Frobenius homomorphism defined by $f(r)=r^{p}$ for all $r \in R$, whose $e$-th iteration is denoted by $f^{e}$. Let $M$ be an $R$-module. $F_{*}^{e}M=\{F_{*}^{e}m \mid m\in M \}$ denotes the Abelian group $M$ with the induced $R$-module structure via the $e$-th iterated Frobenius and it is given by
\begin{center}
	$rF_*^{e}m=F_*^{e}r^{p^e}m$ for all $m \in M$ and $r\in R$
\end{center}
In particular, $F_{*}^{e}R$ is the Abelian group $R$ with the induced $R$-module structure
\begin{center}
	$rF_*^{e}s=F_*^{e}r^{p^e}s $ for all $r,s\in R$.
\end{center}

An $e$-th Frobenius map on $M$ is an $R$-linear map $\phi:M \rightarrow F_{*}^{e}M$, equivalently an additive map $\phi:M\rightarrow M$ such that $\phi(rm)=r^{p^{e}}\phi(m)$ for all $r\in R$ and $m\in M$. Let $R[X;f^{e}]$ be the skew-polynomial ring whose multiplication is subject to the rule $Xr=f^{e}(r)X=r^{p^{e}}X$ for all $r\in R$. Notice that defining an $e$-th Frobenius map on $M$ is equivalent to endowing $M$ with a left $R[X;f^{e}]$-module structure extending the rule $Xm=\phi(m)$ for all $m\in M$.

This paper studies the notion of special ideals. It was introduced by R. Y. Sharp in \cite{RY}. For a left $R[\phi;f^{e}]$-module $M$, when $\phi$ is injective on $M$, he defines an ideal of $R$ to be $M$-special $R$-ideal if it is the annihilator of some $R[\phi;f^{e}]$-submodule of $M$ (cf. \cite[Section 1]{RY}). Later on, it was generalized by M. Katzman and used to study Frobenius maps on injective hulls in \cite{K1} and \cite{K5}. For a left $R[\phi;f^{e}]$-module $M$, Katzman defines an ideal of $R$ to be $M$-special if it is the annihilator of some $R[\phi;f^{e}]$-submodule of $M$ (cf. \cite[Section 6]{K1}). A special case of special ideals is when $R$ is local, $M$ is Artinian, and $\phi$ is injective. In this case, Sharp showed that the set of $M$-special ideals is a finite set of radicals, consisting of all intersections of the finitely many primes in it (\cite[Corollary 3.11]{RY}). It was also proved by F. Enescu and M. Hochster independently (\cite[Section 3]{EH}). When $R$ is complete local regular and $M$ is Artinian, the notion of special ideals becomes an important device to study Frobenius maps on injective hulls. In particular, since top local cohomology module of $R$ is isomorphic to the injective hull of the residue field of $R$, it provides an important insight to top local cohomology modules.

In the case that $R$ is a finite dimensional formal power series ring over a field of prime characteristic $p$, in \cite{K3}, M. Katzman and W. Zhang focus on the $M$-special ideals when $M$ is Artinian. In this case, they define the special ideals depending on the $R[\phi;f]$-module structures on $E^{\alpha}$, where $E$ is the injective hull of the residue field of $R$ and $\alpha$ is a positive integer. They define an ideal of $R$ to be $\phi$-special if it is the annihilator of an $R[\phi;f]$-submodule of $E^{\alpha}$, where $\phi=U^{t}T$ with $T$ is the natural Frobenius on $E^{\alpha}$ and $U$ is an $\alpha\times\alpha$ matrix with entries in $R$ (see Section \ref{section:Katzman-Zhang Algorithm}). Furthermore, they use Katzman's $\Delta^{e}$ and $\Psi^{e}$ functors, which are extensions of Matlis duality keeping track of Frobenius maps, to define $\phi$-special ideals equivalently to be the annihilators of $R^{\alpha}/W$ for some submodule $W$ satisfying $UW\subseteq W^{[p]}$, where $W^{[p]}$ is the submodule generated by $\{ w^{[p]}=(w_{1}^{p},\dots,w_{\alpha}^{p})^{t}\mid w=(w_{1},\dots,w_{\alpha})^{t}\in W\}$ (see Proposition \ref{art1}). Katzman and Zhang show that there are only finitely many $\phi$-special ideals $P$ of $R$ with the property that $P$ is the annihilator of an $R[\phi;f^{e}]$-submodule $M$ of $E^{\alpha}$ such that the restriction of $\phi^{e}$ to $M$ is not zero for all $e$, and introduce an algorithm for finding special prime ideals with this property in \cite{K3}. They first present the case $\alpha=1$, which was considered by M. Katzman and K. Schwede in \cite{K2} with a geometric language. Then they extend this to the case $\alpha>1$. 

In this paper, we adapt the equivalent definition of $\phi$-special ideals above to the polynomial rings, and for an $\alpha\times\alpha$ matrix $U$ we define $U$-special ideals to be the annihilators of $R^{\alpha}/W$ for some submodule $W$ of $R^{\alpha}$ satisfying $UW\subseteq W^{[p]}$. We generalize the results in \cite{K3} to the case that $R$ is a finite dimensional polynomial ring over a field of prime characteristic $p$, and show that there are only finitely many $U$-special ideals with some non degeneracy conditions (see Theorem \ref{last4}). We also present an algorithm for finding $U$-special prime ideals of polynomial rings. Furthermore, we consider the notion of $F$-finite $F$-modules, which is a prime characteristic extension of local cohomology modules introduced by G. Lyubeznik in \cite{L1}, and we show that our new algorithm gives a method for finding the prime ideals of $R$ such that $\crk(H_{IR_{P}}^{i}(R_{P})) \neq 0$ (see Definition \ref{crk} and Theorem \ref{m1}).

\section{Preliminaries}

In this section, we collect some notations and necessary background for this paper.

\subsection{The Frobenius Functor}
Let $M$ be an $R$-module. The Frobenius functor $F_{R}$ from the category of $R$-modules to itself is defined by $F_{R}(M):=F_{*}R\otimes_{R}M$ where $F_{R}(M)$ acquires its $R$-module structure via the identification of $F_{*}R$ with $R$. The resulting $R$-module structure on $F_{R}(M)$ satisfies
\[
s(F_{*}r\otimes m)=F_{*}sr\otimes m \text{ and } F_{*}s^{p}r\otimes m=F_{*}r\otimes sm
\] 
for all $r,s \in R$ and $m \in M$. The $e$-th iteration of $F_{R}$ is denoted by $F_{R}^{e}$, and it is clearly given by $F_{R}^{e}(M)=F_{*}^{e}R\otimes_{R}M$.

Regularity of $R$ implies that the Frobenius functor is exact.

\subsection{Lyubeznik's $F$-modules}
An $R$-module $\mathcal{M}$ is called to be an $F$-module if it is equipped with an $R$-module isomorphism $\theta : \mathcal{M} \rightarrow F_{R}(\mathcal{M})$ which we call the structure isomorphism of $\mathcal{M}$.

An $F$-module homomorphism is an $R$-module homomorphism $\phi: \mathcal{M} \rightarrow \mathcal{M'}$ such that the following diagram commutes
\[
\begin{CD}
\mathcal{M} @>\phi>> \mathcal{M'}\\
@V\theta VV @VV\theta' V\\ 
F_{R}(\mathcal{M}) @>>F_{R}(\phi)> F_{R}(\mathcal{M'})
\end{CD}
\]
where $\theta$ and $\theta'$ are the structure isomorphisms of $\mathcal{M}$ and $\mathcal{M'}$, respectively.

A generating morphism of an $F$-module $\mathcal{M}$ is an $R$-module homomorphism $\beta : M \rightarrow F_{R}(M)$, where $M$ is an $R$-module, such that $\mathcal{M}$ is the limit of the inductive system in top row of commutative diagram
\[
\begin{CD} 
M @>\beta >> F_{R}(M) @>F_{R}(\beta)>> F_{R}^{2}(M) @>F_{R}^{2}(\beta)>> \cdots\\
@V\beta VV @V F_{R}(\beta)VV @V F_{R}^{2}(\beta)VV\\
F_{R}(M) @>>F_{R}(\beta)> F_{R}^{2}(M) @>>F_{R}^{2}(\beta)> F_{R}^{3}(M) @>>F_{R}^{3}(\beta)> \cdots
\end{CD}
\]
and the structure isomorphism of $\mathcal{M}$ is induced by the vertical arrows in this diagram.

An $F$-module $\mathcal{M}$ is called $F$-finite if it has a generating morphism $\beta : M \rightarrow F_{R}(M)$ with $M$ a finitely generated $R$-module. In addition, if $\beta$ is injective, $M$ is called a root of $\mathcal{M}$ and $\beta$ is called a root morphism of $\mathcal{M}$.

\begin{exam} \label{t3}
Any $R$-module isomorphism $\phi:R \rightarrow F_{R}(R)$ makes $R$ into an $F$-module. In particular, the canonical isomorphism 
\[
\phi:R \rightarrow F_{*}R \otimes_{R}R=F_{R}(R) \text{ defined by } r\mapsto F_{*}r\otimes 1.
\]
In particular, $R$ is $F$-finite $F$-module. Therefore, by \cite[Proposition 2.10]{L1}, local cohomology modules $H_{I}^{i}(R)$ with support on an ideal $I\subseteq R$ are $F$-finite $F$-modules. Furthermore, by \cite[Proposition 2.3]{L1}, we have
\[
H_{I}^{i}(R)=\varinjlim(M\xrightarrow{\beta}F_{R}(M) \xrightarrow{F_{R}(\beta)} F_{R}^{2}(M) \xrightarrow{F_{R}^{2}(\beta)}\cdots)
\]
where $\beta : M \rightarrow F_{R}(M)$ is a root morphism.
\end{exam}

\subsection{$I_{e}(-)$ Operation and $\star$-closure}

In this subsection, we will give definitions of $\Ie_{e}(-)$ operation and $\star$-closure, and some properties of them. To do this we need the property that $F_{*}^{e}R$ are intersection flat $R$-modules for all positive integer $e$.

\begin{definition}
	An $R$-module $M$ is intersection flat\index{intersection flat modules} if it is flat and for all sets of $R$-submodules $\{N_{\lambda}\}_{\lambda \in \Lambda}$ of a finitely generated $R$-module $N$,
	$$M \otimes _{R} \bigcap _{\lambda \in \Lambda} N_{\lambda}= \bigcap _{\lambda \in \Lambda} (M \otimes _{R} N_{\lambda})$$
\end{definition}

\begin{definition}
	Let $I$ be an ideal of $R$. Ideal generated by the set $\{r^{p^{e}} \mid r\in I \}$ is called the Frobenius power of $I$ and denoted by $I^{[p^{e}]}$. Consequently, if $I=\langle r_{1}, \dots ,r_{n}\rangle$, then $I^{[p^{e}]}=\langle r_{1}^{p^{e}}, \dots ,r_{n}^{p^{e}}\rangle$.
\end{definition}

\begin{remark}
	Since intersection flat $R$-modules include $R$ and closed under arbitrary direct sum, free $R$-modules are intersection flat. For instance, when $R=\Bbbk[x_{1},\dots,x_{n}]$ over a field $\Bbbk$ of prime characteristic $p$, $F_{*}^{e}R$ are free, and so intersection flat. In addition, by \cite[Proposition 5.3]{K1}, when $R=\Bbbk[\![x_{1},\dots,x_{n}]\!]$ over a field $\Bbbk$ of prime characteristic $p$,  $F_{*}^{e}R$ are intersection flat. Because of regularity, these rings have the property that for any collection of ideals $\{A_{\lambda}\}_{\lambda \in \Lambda}$ of $R$,
	\[
	(\cap_{\lambda \in \Lambda}A_{\lambda})^{[p^{e}]}\cong F_{R}^{e}(\cap_{\lambda \in \Lambda}A_{\lambda})\cong \cap_{\lambda \in \Lambda}F_{R}^{e}(A_{\lambda}) \cong \cap_{\lambda \in \Lambda}A_{\lambda}^{[p^{e}]},
	\]
	and this is enough to define the minimal ideal $J \subseteq R$ with the property $A \subseteq J^{[p^{e}]}$.
\end{remark}

Henceforth $R$ will denote a ring with the property that $F_{*}^{e}R$ are intersection flat for all positive integer $e$.

\begin{pd} \cite[Section 5]{K1} Let $e$ be a positive integer.
	\begin{enumerate}
		\item For an ideal $A \subseteq R$ there exists a minimal ideal $J \subseteq R$ with the property $A \subseteq J^{[p^{e}]}$. We denote this minimal ideal by $\Ie_{e}(A)$\index{$\Ie_{e}(-)$ operation}.
		\item Let $u \in R$ be a non zero element and $A \subseteq R$ an ideal. The set of all ideals $B \subseteq R$ which contain $A$ and satisfy $uB \subseteq B^{[p^{e}]}$ has a unique minimal element. We call this ideal the star closure of $A$ with respect to $u$ and denote it by $A^{\star^{e}u}$\index{$\star$-closure}.
	\end{enumerate}
\end{pd}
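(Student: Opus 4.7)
The plan is to deduce both statements directly from the identity $(\bigcap_{\lambda} A_{\lambda})^{[p^{e}]}=\bigcap_{\lambda} A_{\lambda}^{[p^{e}]}$ that was highlighted in the remark preceding the statement, together with the elementary observation that the family of candidate ideals in each case is nonempty. In each part, the minimal element will simply be the intersection of all ideals in the relevant family; the only real content is verifying that this intersection still satisfies the defining property, which is exactly what intersection flatness is designed to give.

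For part (1), I would consider the family
\[
\mathcal{F}=\{J\subseteq R\text{ ideal}\mid A\subseteq J^{[p^{e}]}\}.
\]
Since $R^{[p^{e}]}=R\supseteq A$, the family contains $R$, so it is nonempty. Define $\Ie_{e}(A):=\bigcap_{J\in\mathcal{F}}J$. It suffices to check that $\Ie_{e}(A)\in\mathcal{F}$, for then it is clearly the minimum. Using that $F_{*}^{e}R$ is intersection flat together with the chain of isomorphisms displayed in the remark, we obtain
\[
\Ie_{e}(A)^{[p^{e}]}=\Big(\bigcap_{J\in\mathcal{F}}J\Big)^{[p^{e}]}=\bigcap_{J\in\mathcal{F}}J^{[p^{e}]}\supseteq A,
\]
which is exactly the required containment.

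For part (2), the argument is parallel. Consider
\[
\mathcal{G}=\{B\subseteq R\text{ ideal}\mid A\subseteq B\text{ and }uB\subseteq B^{[p^{e}]}\}.
\]
Again $R\in\mathcal{G}$, so the family is nonempty, and I set $A^{\star^{e}u}:=\bigcap_{B\in\mathcal{G}}B$. Containment $A\subseteq A^{\star^{e}u}$ is automatic. For the closure condition, I multiply by $u$ and push it into the intersection, then invoke intersection flatness exactly as in part (1):
\[
uA^{\star^{e}u}\subseteq\bigcap_{B\in\mathcal{G}}uB\subseteq\bigcap_{B\in\mathcal{G}}B^{[p^{e}]}=\Big(\bigcap_{B\in\mathcal{G}}B\Big)^{[p^{e}]}=(A^{\star^{e}u})^{[p^{e}]}.
\]
Hence $A^{\star^{e}u}\in\mathcal{G}$, and by construction it is the unique minimum.

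The only conceptual step in either part is the commutation of Frobenius powers with arbitrary intersections, and this has already been established via intersection flatness of $F_{*}^{e}R$ in the preceding remark; without it, one could not guarantee that the intersection of a family closed under the property $A\subseteq J^{[p^{e}]}$ (respectively $uB\subseteq B^{[p^{e}]}$) remains in the family, and the minimal element might fail to exist. The remaining verifications (nonemptiness of the families, and $A\subseteq A^{\star^{e}u}$) are immediate, so there is no real obstacle beyond citing the intersection-flatness framework.
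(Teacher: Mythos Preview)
Your proof is correct. The paper does not actually prove this statement in the text---it is cited from \cite{K1}---and the remark immediately preceding it effectively sketches your argument for part~(1): the identity $(\bigcap_\lambda A_\lambda)^{[p^e]} = \bigcap_\lambda A_\lambda^{[p^e]}$ is declared to be ``enough to define the minimal ideal $J$,'' which is exactly your intersection-over-all-candidates construction.

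For part~(2), however, the paper takes a genuinely different route when it proves the submodule analogue in the very next Proposition-Definition. There the star closure is built iteratively: set $V_0 = V$ and $V_{i+1} = \Ie_e(UV_i) + V_i$; this ascending chain stabilizes by Noetherianity, and the stable value is shown to be the minimum by an easy induction on $i$. Your direct intersection argument is conceptually cleaner and invokes intersection flatness uniformly in both parts, but the paper's iterative construction is the one that matters downstream: it furnishes an explicit procedure for computing the star closure, and it is precisely this inductive description that is used later (in the lemma on $(\widehat{K_P})^{\star^e U}$) to show that star closure commutes with localization and completion, by simply localizing each term $V_i$. So your approach establishes existence more economically, while the paper's approach buys computability and good behavior under localization, which are central to the algorithmic aims of the paper.
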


\begin{definition}
	Given any matrix (or vector) $V$ with entries in $R$, we define $V^{[p^{e}]}$ to be the matrix obtained from $V$ by raising its entries to the $p^{e}$-th power. Given any submodule $K \subseteq R^{\alpha}$, we define $K^{[p^{e}]}$ to be the $R$-submodule of $R^{\alpha}$ generated by $\{ v^{p^{e}} \mid v \in K \}$.
\end{definition}

The Proposition-Definition below extends $\Ie_{e}(-)$-operation and $\star$-closure defined on ideals to submodules of free $R$-modules.

\begin{pd} Let $e$ be a positive integer.
	\begin{enumerate}
		\item Given a submodule $K \subseteq R^{\alpha}$ there exists a minimal submodule $L \subseteq R^{\alpha}$ for which $K \subseteq L^{[p^{e}]}$. We denote this minimal submodule $\Ie_{e}(K)$.
		\item Let $U$ be an $\alpha \times \alpha$ matrix with entries in $R$ and $V \subseteq R^{\alpha}$. The set of all submodules $K \subseteq R^{\alpha}$ which contain $V$ and satisfy $UK \subseteq K^{[p^{e}]}$ has a unique minimal element. We call this submodule the star closure of $V$ with respect to $U$ and denote it $V^{\star^{e}U}$.
	\end{enumerate}
\end{pd}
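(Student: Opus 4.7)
The plan is to mirror the ideal-case argument from the preceding Proposition-Definition, working with submodules of $R^{\alpha}$ in place of ideals of $R$. The crucial ingredient is intersection flatness of $F_{*}^{e}R$, applied here to the finitely generated free $R$-module $R^{\alpha}$. First I would establish the module-theoretic analogue of the formula that Frobenius power commutes with arbitrary intersection; once that is in hand, both parts follow by forming the intersection of the obvious nonempty family and checking that it still lies in the family.

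The key lemma states: for any collection $\{L_{\lambda}\}_{\lambda\in\Lambda}$ of $R$-submodules of $R^{\alpha}$, one has
\[
\Bigl(\bigcap_{\lambda\in\Lambda} L_{\lambda}\Bigr)^{[p^{e}]} = \bigcap_{\lambda\in\Lambda} L_{\lambda}^{[p^{e}]}
\]
as submodules of $R^{\alpha}$. Since $R$ is regular, $F_{R}^{e}$ is exact, so the inclusion $L\hookrightarrow R^{\alpha}$ induces an injection $F_{R}^{e}(L)\hookrightarrow F_{R}^{e}(R^{\alpha})\cong R^{\alpha}$ whose image is exactly $L^{[p^{e}]}$. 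Intersection flatness of $F_{*}^{e}R$, applied to the finitely generated $R$-module $R^{\alpha}$, then yields
\[
F_{*}^{e}R\otimes_{R}\bigcap_{\lambda\in\Lambda} L_{\lambda} \;=\; \bigcap_{\lambda\in\Lambda}\bigl(F_{*}^{e}R\otimes_{R}L_{\lambda}\bigr),
\]
which translates under the above identification to the desired equality.

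For (1), the set $\mathcal{F}:=\{L\subseteq R^{\alpha} : K\subseteq L^{[p^{e}]}\}$ is nonempty, as it contains $R^{\alpha}$. Set $\Ie_{e}(K):=\bigcap_{L\in\mathcal{F}}L$. The key lemma gives $\Ie_{e}(K)^{[p^{e}]}=\bigcap_{L\in\mathcal{F}}L^{[p^{e}]}\supseteq K$, so $\Ie_{e}(K)\in\mathcal{F}$ and it is minimal there by construction. For (2), the set $\mathcal{G}:=\{K\subseteq R^{\alpha} : V\subseteq K \text{ and } UK\subseteq K^{[p^{e}]}\}$ is nonempty (again contains $R^{\alpha}$); set $V^{\star^{e}U}:=\bigcap_{K\in\mathcal{G}}K$. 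Trivially $V\subseteq V^{\star^{e}U}$, and $R$-linearity of $v\mapsto Uv$ gives
\[
U\bigl(V^{\star^{e}U}\bigr)\subseteq \bigcap_{K\in\mathcal{G}}UK\subseteq \bigcap_{K\in\mathcal{G}}K^{[p^{e}]}=\bigl(V^{\star^{e}U}\bigr)^{[p^{e}]},
\]
the final equality by the key lemma. Hence $V^{\star^{e}U}\in\mathcal{G}$ and is its minimum.

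The only genuine obstacle is the key lemma: one has to verify that the natural map $F_{R}^{e}(L)\to R^{\alpha}$ induced by $L\hookrightarrow R^{\alpha}$ has image precisely $L^{[p^{e}]}$, and then invoke intersection flatness with respect to the finitely generated module $R^{\alpha}$. Beyond that, both statements are a direct transcription of the ideal-case argument.
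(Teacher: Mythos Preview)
Your proposal is correct. For part (1) your intersection argument via intersection flatness of $F_{*}^{e}R$ applied to $R^{\alpha}$ is essentially the same idea the paper (and its reference) uses; there is nothing to add there.

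For part (2), however, you take a genuinely different route. The paper constructs $V^{\star^{e}U}$ \emph{from below}: it sets $V_{0}=V$, $V_{i+1}=\Ie_{e}(UV_{i})+V_{i}$, notes this ascending chain stabilizes by Noetherianity, checks the stable value satisfies $UV_{j}\subseteq V_{j}^{[p^{e}]}$, and proves minimality by induction on $i$. You instead construct $V^{\star^{e}U}$ \emph{from above} as the intersection of all members of $\mathcal{G}$, and close that family under intersection using your key lemma. Your argument is shorter and does not even need part (1) as input. What the paper's argument buys, though, is an explicit iterative description of $V^{\star^{e}U}$; this is not merely cosmetic, since the recursive sequence $V_{i+1}=\Ie_{e}(UV_{i})+V_{i}$ is invoked repeatedly later (e.g.\ in the proof that $\star$-closure commutes with localization and completion, and throughout the algorithmic sections). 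So while your proof establishes the statement cleanly, the paper's proof simultaneously sets up the computational machinery used downstream.
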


\begin{proof} For the proof of (\textit{1}) we refer to \cite[Section 3]{K3}. For the proof of (\textit{2}) we shall construct a similar method to that in \cite[Section 3]{K3}. Let $V_{0}=V$ and $V_{i+1}=I_{e}(UV_{i})+V_{i}$. Then $\{V_{i}\}_{i\geq 0}$ is an ascending chain and it stabilizes, since $R$ is Noetherian, i.e. $V_{j}=V_{j+k}$ fo all $k>0$ for some $j\geq 0$. Therefore, $V_{j}=I_{e}(UV_{j})+V_{j}$ implies $I_{e}(UV_{j}) \subseteq V_{j}$, and so $UV_{j} \subseteq V_{j}^{[p^{e}]}$. We show the minimality of $V_{j}$ by induction on $i$. Let $Z$ be any submodule of $R^{\alpha}$ containing $V$ with the property that $UZ \subseteq Z^{[p^{e}]}$. Then we clearly have $V_{0}=V \subseteq Z$, and suppose that $V_{i} \subseteq Z$ for some $i$. Thus, $UV_{i} \subseteq UZ \subseteq Z^{[p^{e}]}$, which implies $I_{e}(UV_{i}) \subseteq Z$ and so $V_{i+1} \subseteq Z$. Hence, $V_{j} \subseteq Z$. 
\end{proof}

For the calculation of $\Ie_{e}(-)$ operation, if $R$ is a free $R^{p^{e}}$-module, we first fix a free basis $\mathcal{B}$ for $R$ as an $R^{p^{e}}$-module, then every element $v \in R^{\alpha}$ can be expressed uniquely in the form $v=\sum_{b\in\mathcal{B}}u_{b}^{[p^{e}]}b$ where $u_{b} \in R^{\alpha}$ for all $b\in \mathcal{B}$.

\begin{proposition}\cite[Proposition 3.4]{K3} Let $e>0$.
	\begin{enumerate}
		\item For any submodules $V_{1}, \dots ,V_{n}$ of $R^{\alpha}$, $\Ie_{e}(V_{1}+ \cdots +V_{n})=\Ie_{e}(V_{1})+ \cdots +\Ie_{e}(V_{n})$.
		\item Let $\mathcal{B}$ be a free basis for $R$ as $R^{p^{e}}$-module. Let $v \in R^{\alpha}$ and $v=\sum_{b\in\mathcal{B}}u_{b}^{[p^{e}]}b$ be the unique expression for $v$ where $u_{b} \in R^{\alpha}$ for all $b\in \mathcal{B}$. Then $\Ie_{e}(\langle v\rangle)$ is the submodule of $R^{\alpha}$ generated by $\{u_{b}\mid b\in \mathcal{B}\}$. 
	\end{enumerate}
	
\end{proposition}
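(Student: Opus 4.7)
Both parts follow from the universal property of $\Ie_{e}(-)$ as a minimal submodule combined with two elementary identities: $(L_{1}+L_{2})^{[p^{e}]}=L_{1}^{[p^{e}]}+L_{2}^{[p^{e}]}$ for submodules of $R^{\alpha}$, and $(u+w)^{[p^{e}]}=u^{[p^{e}]}+w^{[p^{e}]}$ for $u,w\in R^{\alpha}$, each of which is just the freshman's dream applied coordinate-wise in characteristic $p$.

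For (\textit{1}) I would prove the two inclusions separately. For $\subseteq$, from $V_{i}\subseteq \Ie_{e}(V_{i})^{[p^{e}]}$ and the identity above I get
\[
V_{1}+\cdots+V_{n}\subseteq \Ie_{e}(V_{1})^{[p^{e}]}+\cdots+\Ie_{e}(V_{n})^{[p^{e}]}=\bigl(\Ie_{e}(V_{1})+\cdots+\Ie_{e}(V_{n})\bigr)^{[p^{e}]},
\]
and the minimality of $\Ie_{e}(V_{1}+\cdots+V_{n})$ yields the inclusion. For $\supseteq$, each $V_{i}$ sits inside $V_{1}+\cdots+V_{n}\subseteq \Ie_{e}(V_{1}+\cdots+V_{n})^{[p^{e}]}$, so the minimality of each $\Ie_{e}(V_{i})$ gives $\Ie_{e}(V_{i})\subseteq \Ie_{e}(V_{1}+\cdots+V_{n})$, and summing over $i$ finishes.

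For (\textit{2}), let $L$ denote the submodule of $R^{\alpha}$ generated by $\{u_{b}\mid b\in\mathcal{B}\}$. The inclusion $\Ie_{e}(\langle v\rangle)\subseteq L$ is immediate: since each $u_{b}\in L$, the decomposition $v=\sum_{b}u_{b}^{[p^{e}]}b$ exhibits $v$ as an element of $L^{[p^{e}]}$, and minimality does the rest. For the reverse inclusion I would take any submodule $K\subseteq R^{\alpha}$ with $\langle v\rangle\subseteq K^{[p^{e}]}$, write $v=\sum_{i}r_{i}k_{i}^{[p^{e}]}$ with $r_{i}\in R$, $k_{i}\in K$, expand each scalar as $r_{i}=\sum_{b}s_{i,b}^{p^{e}}b$ in the $R^{p^{e}}$-basis $\mathcal{B}$, and then rearrange to obtain
\[
v=\sum_{b}b\sum_{i}(s_{i,b}k_{i})^{[p^{e}]}=\sum_{b}\Bigl(\sum_{i}s_{i,b}k_{i}\Bigr)^{[p^{e}]}b,
\]
using the coordinate-wise freshman's dream in the second equality. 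Uniqueness of the $\mathcal{B}$-expansion of $v$ then forces $u_{b}=\sum_{i}s_{i,b}k_{i}$, an element of $K$, for every $b$, so $L\subseteq K$; specialising to $K=\Ie_{e}(\langle v\rangle)$ completes the argument.

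The main obstacle will be ensuring that the expansion $v=\sum_{b}u_{b}^{[p^{e}]}b$ in $R^{\alpha}$ is genuinely unique; this reduces to the $R^{p^{e}}$-freeness of $R$ applied on each coordinate of $R^{\alpha}$ separately. Once this uniqueness and the two freshman's-dream identities are in hand, the rearrangement in (\textit{2}) is purely formal bookkeeping.
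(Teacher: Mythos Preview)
Your proof is correct. The paper does not actually give a proof of this proposition; it simply cites \cite[Proposition 3.4]{K3} and moves on, so there is nothing to compare against beyond noting that your argument is the standard one and matches what appears in the cited reference.
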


The behaviour of the $\Ie_{e}(-)$ operation under localization is very crucial for our results. The following lemma shows that it commutes with localization.

\begin{lemma}\cite[Lemma 2.5]{K4} \label{ie} Let $\mathcal{R}$ be a localization of $R$ or a completion at a prime ideal. For all $e\in\N$, and all submodules $K \subseteq R^{\alpha}$, $\Ie_{e}(K \otimes_{R} \mathcal{R})$ exists and equals to $\Ie_{e}(K) \otimes_{R} \mathcal{R}$.
\end{lemma}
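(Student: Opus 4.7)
The plan is to split the equality into the two inclusions and to exploit the explicit basis description of $\Ie_e(-)$ from the preceding proposition; the existence of $\Ie_e(K \otimes_R \mathcal{R})$ will fall out as a byproduct once one checks that $F_*^e \mathcal{R}$ remains intersection flat (for localizations this is a consequence of the basis-preservation step below, which makes $F_*^e \mathcal{R}$ free over $\mathcal{R}$; for the completion of a regular local ring it is \cite[Proposition 5.3]{K1}).

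The easy inclusion $\Ie_e(K \otimes_R \mathcal{R}) \subseteq \Ie_e(K) \otimes_R \mathcal{R}$ drops out of minimality. Flatness of $\mathcal{R}$ over $R$ makes Frobenius commute with the base change, so $(\Ie_e(K) \otimes_R \mathcal{R})^{[p^e]} = \Ie_e(K)^{[p^e]} \otimes_R \mathcal{R} \supseteq K \otimes_R \mathcal{R}$, and $\Ie_e(K) \otimes_R \mathcal{R}$ is thereby one of the candidates over which $\Ie_e(K \otimes_R \mathcal{R})$ is minimal.

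For the reverse inclusion I would first reduce to the cyclic case: since $R^\alpha$ is Noetherian, $K = \langle v_1, \ldots, v_n\rangle$ is finitely generated, and by part (1) of the preceding proposition combined with the compatibility of $\otimes_R \mathcal{R}$ with finite sums it is enough to prove the equality for $K = \langle v\rangle$. Then I fix a free basis $\mathcal{B}$ of $R$ as an $R^{p^e}$-module and write $v = \sum_b u_b^{[p^e]} b$ uniquely, so that part (2) of the proposition gives $\Ie_e(\langle v\rangle) = \langle u_b\rangle_R$. Provided $\mathcal{B}$ is also a free basis of $\mathcal{R}$ as an $\mathcal{R}^{p^e}$-module, the same expansion is the unique one for $v$ in $\mathcal{R}^\alpha$, and part (2) applied over $\mathcal{R}$ yields $\Ie_e(\langle v\rangle \otimes_R \mathcal{R}) = \langle u_b\rangle_\mathcal{R} = \langle u_b\rangle_R \otimes_R \mathcal{R} = \Ie_e(\langle v\rangle) \otimes_R \mathcal{R}$, closing the argument.

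The main obstacle is the basis-preservation step. For $\mathcal{R} = S^{-1}R$ this is short: the multiplicative sets $S$ and $S^{p^e}$ have the same saturation (each $s \in S$ divides $s^{p^e} \in S^{p^e}$, while $S^{p^e} \subseteq S$ trivially), so $\mathcal{R} \cong (S^{p^e})^{-1} R \cong \mathcal{R}^{p^e} \otimes_{R^{p^e}} R$, and applying this base change to the decomposition $R = \bigoplus_b R^{p^e} \cdot b$ yields $\mathcal{R} = \bigoplus_b \mathcal{R}^{p^e} \cdot b$. For $\mathcal{R}$ a completion at a prime ideal, one invokes Cohen's structure theorem to identify $\mathcal{R}$ with a power series ring over a (perfect) coefficient field, in which the analogous monomial set is visibly a free basis over the $p^e$-th-power subring; this is the most delicate point, but is routine in the polynomial-ring setting of the paper.
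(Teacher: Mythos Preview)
The paper does not supply its own proof of this lemma; it simply cites \cite[Lemma 2.5]{K4}. Your argument for the easy inclusion and for the localization case is correct and is essentially the standard one.

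The completion case, however, has a real gap. You assert that the coefficient field arising from Cohen's structure theorem is perfect, but this already fails for $R=\mathbb{F}_p[x,y]$ completed at $P=(y)$, where the residue field is $\mathbb{F}_p(x)$. More seriously, even granting \emph{some} free basis for $\widehat{R_P}$ over $(\widehat{R_P})^{p^e}$, you would be using a basis different from the $\mathcal{B}$ fixed over $R$; your key sentence ``the same expansion is the unique one for $v$ in $\mathcal{R}^\alpha$'' then no longer follows, and the bridge between $\Ie_e(\langle v\rangle)$ computed over $R$ and over $\mathcal{R}$ is lost.

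The clean repair is to show that the \emph{same} $\mathcal{B}$ persists to the completion. Factor $R\to\widehat{R_P}$ through $R_P$; your localization step already makes $F_*^eR_P$ finite free over $R_P$ with basis $\mathcal{B}$. Because $F_*^eR_P$ is finitely generated, its $\mathfrak{m}$-adic completion is $F_*^eR_P\otimes_{R_P}\widehat{R_P}$, free over $\widehat{R_P}$ with basis $\mathcal{B}$. It remains only to identify this completion with $F_*^e\widehat{R_P}$: the $\mathfrak{m}$-adic filtration on $F_*^eR_P$ is $\mathfrak{m}^k\cdot F_*^eR_P=F_*^e\big((\mathfrak{m}^{[p^e]})^k\big)$, and since $\mathfrak{m}^{[p^e]}$ is $\mathfrak{m}$-primary the two topologies coincide, giving $\widehat{F_*^eR_P}=F_*^e\widehat{R_P}$. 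With $\mathcal{B}$ thus preserved, your cyclic-case computation goes through verbatim.
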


\begin{lemma}\label{sp2}
	Let $U$ be a non-zero $\alpha \times \alpha$ matrix with entries in $R$ and $K \subseteq R^{\alpha}$ a submodule. For any prime ideal $P\subseteq R$,
	\[
	(\widehat{K_{P}})^{\star^{e}U}=\widehat{(K^{\star^{e}U})_{P}}.
	\]
\end{lemma}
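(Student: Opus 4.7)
The plan is to use the constructive description of $\star$-closure from the preceding Proposition-Definition and show that localization followed by completion commutes with each step of the iteration.

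First, I would recall the iterative construction: setting $V_0 = K$ and $V_{i+1} = \Ie_e(UV_i) + V_i$, the chain $\{V_i\}$ ascends and stabilizes (by Noetherianity of $R$) at some index $j$, with $K^{\star^e U} = V_j$. I would then look at the analogous chain $\{W_i\}$ starting from $W_0 = \widehat{K_P}$ in $\widehat{R_P}^{\,\alpha}$, defined by $W_{i+1} = \Ie_e(UW_i) + W_i$, which also stabilizes since $\widehat{R_P}$ is Noetherian. By the same Proposition-Definition applied over $\widehat{R_P}$, the stable value of $\{W_i\}$ is exactly $(\widehat{K_P})^{\star^e U}$.

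The core step is to verify inductively that $W_i = \widehat{(V_i)_P}$ for all $i$. The base case $i=0$ is immediate. For the inductive step I would invoke Lemma \ref{ie}, which gives $\Ie_e(UV_i \otimes_R \widehat{R_P}) = \Ie_e(UV_i) \otimes_R \widehat{R_P}$; combined with the fact that localization at $P$ and $P\widehat{R_P}$-adic completion are flat (hence preserve both products $UV_i$ and sums of submodules of a finitely generated module), this yields
\[
\widehat{(V_{i+1})_P} = \widehat{(\Ie_e(UV_i))_P} + \widehat{(V_i)_P} = \Ie_e(U\,\widehat{(V_i)_P}) + \widehat{(V_i)_P} = \Ie_e(UW_i) + W_i = W_{i+1}.
\]
Once stability is reached at index $j$ in the original chain, it holds in the localized-completed chain as well (the chain $\{W_i\}$ is then eventually constant from index $j$ onwards, even if it stabilized earlier), so the stable value of $\{W_i\}$ equals $W_j = \widehat{(V_j)_P} = \widehat{(K^{\star^e U})_P}$. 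Identifying this stable value with $(\widehat{K_P})^{\star^e U}$ yields the claim.

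The only potential subtlety — and what I would flag as the main obstacle — is ensuring that $\Ie_e(-)$ is genuinely defined over $\widehat{R_P}$ so that the construction of the Proposition-Definition actually applies there; this is handled by the hypothesis that $F_*^e\widehat{R_P}$ is intersection flat (or more precisely by Lemma \ref{ie}, which is stated to cover both localizations and completions of $R$). Everything else is routine bookkeeping with flat base change and with sums of finitely generated submodules.
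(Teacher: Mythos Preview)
Your proposal is correct and follows essentially the same approach as the paper: the paper likewise defines the two iterative sequences (over $R$ and over $\widehat{R_P}$), invokes commutation of $\Ie_e(-)$ with localization and completion, and concludes by induction that the sequences agree termwise. Your write-up simply fills in more of the routine details (flatness, the stability index, and the well-definedness of $\Ie_e$ over $\widehat{R_P}$) than the paper does.
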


\begin{proof}
	Define inductively $K_{0}=K$ and $K_{i+1}=I_{e}(UK_{i})+K_{i}$, and also $L_{0}=\widehat{K_{P}}$ and $L_{i+1}=I_{e}(UL_{i})+ L_{i}$ for all $i \geq 0$. Since $\Ie_{e}(-)$ operation commutes with localization and completion, an easy induction shows that $L_{i}=\widehat{(K_{i})_{P}}$, and the result follows.
\end{proof}

\section{The Katzman-Schwede Algorithm}
\label{section:Katzman-Schwede Algorithm}
The purpose of this section is to redefine the algorithm described in \cite{K2} with a more algebraic language and show that it commutes with localization. Let $R=\Bbbk[x_{1},\dots,x_{n}]$ be a polynomial ring over a field of characteristic $p$ and $e$ be a positive integer.

\begin{definition}
	For any $R$-linear map $\phi : F_{*}^{e}R \rightarrow R$, we say that an ideal $J \subseteq R$ is $\phi$-compatible\index{compatible ideal} if $\phi(F_{*}^{e}J) \subseteq J$.
\end{definition}

Given $\phi$ which is compatible with $J$ as above definition, there is always a commutative diagram
$$\begin{array}[c]{ccc}
F_{*}^{e}R&\stackrel{\phi}{\longrightarrow}&R\\
\downarrow\scriptstyle{}&&\downarrow\scriptstyle{}\\
F_{*}^{e}(R/J)&\stackrel{\phi'}{\longrightarrow}&R/J
\end{array}$$
where the vertical arrows are the canonical surjections.

\begin{lemma}\cite[Lemma 2.4]{K2}
	Assuming a commutative diagram as above, the $\phi$-compatible ideals containing $J$ are in the bijective correspondence with the $\phi'$-compatible ideals of $R/J$, where $\phi'$ is the induced map $F_{*}^{e}(R/J) \stackrel{\phi'}{\longrightarrow} R/J$ as in above diagram.
\end{lemma}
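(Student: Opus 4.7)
The plan is to use the standard order-preserving bijection between ideals of $R$ containing $J$ and ideals of $R/J$, namely $I \leftrightarrow \bar{I} := I/J$, and to verify that $\phi$-compatibility is preserved in both directions by chasing the commutative square. So the only real content is to establish the equivalence
\[
\phi(F_{*}^{e}I) \subseteq I \quad\Longleftrightarrow\quad \phi'(F_{*}^{e}\bar{I}) \subseteq \bar{I}
\]
for every ideal $I$ of $R$ with $J \subseteq I$.

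First I would name the vertical maps. Let $\pi : R \to R/J$ be the canonical surjection, so that the commutative diagram states $\pi\circ\phi = \phi'\circ F_{*}^{e}\pi$. Note that $F_{*}^{e}\pi$ sends $F_{*}^{e}r$ to $F_{*}^{e}\bar{r}$, so $F_{*}^{e}\pi(F_{*}^{e}I) = F_{*}^{e}\bar{I}$ when $J \subseteq I$.

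For the forward direction, assume $\phi(F_{*}^{e}I)\subseteq I$. Applying $\pi$ and using the commutativity of the square gives
\[
\phi'(F_{*}^{e}\bar{I}) = \phi'\bigl(F_{*}^{e}\pi(F_{*}^{e}I)\bigr) = \pi\bigl(\phi(F_{*}^{e}I)\bigr) \subseteq \pi(I) = \bar{I},
\]
so $\bar{I}$ is $\phi'$-compatible. Conversely, assume $\phi'(F_{*}^{e}\bar{I}) \subseteq \bar{I}$. For any $r \in I$, commutativity yields $\overline{\phi(F_{*}^{e}r)} = \phi'(F_{*}^{e}\bar{r}) \in \bar{I}$, so $\phi(F_{*}^{e}r) \in \pi^{-1}(\bar{I}) = I$ (here we use that $J \subseteq I$). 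Hence $\phi(F_{*}^{e}I)\subseteq I$.

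Since the lattice correspondence $I \leftrightarrow \bar{I}$ is itself bijective and inclusion-preserving, combining it with the equivalence above produces the desired bijection between $\phi$-compatible ideals of $R$ containing $J$ and $\phi'$-compatible ideals of $R/J$. There is no real obstacle; the entire argument is a diagram chase, and the only point requiring a moment of care is the identification $F_{*}^{e}\pi(F_{*}^{e}I) = F_{*}^{e}\bar{I}$, which is immediate from the definition of the functor $F_{*}^{e}$ on submodules.
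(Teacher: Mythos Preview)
Your argument is correct; it is the standard diagram chase showing that the lattice isomorphism $I\leftrightarrow I/J$ restricts to a bijection between $\phi$-compatible ideals containing $J$ and $\phi'$-compatible ideals. Note, however, that the paper does not supply its own proof of this lemma: it simply cites \cite[Lemma 2.4]{K2}, so there is no in-paper argument to compare against.
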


Next we will explain the $F_{*}^{e}R$-module structure of $\homm_{R}(F_{*}^{e}R,R)$, which is crucial for our computational techniques in this paper.

\begin{remark} \label{rfree}
	Let $\mathcal{C}$ be a base for $\Bbbk$ as a $\Bbbk^{p^{e}}$-vector space which includes the identity element of $\Bbbk$. It is well known that $F_{*}^{e}R$ is a free $R$-module with the basis set
	\[
	\mathcal{B}=\{ F_{*}^{e}\lambda x_{1}^{\alpha_{1}} \dots x_{n}^{\alpha_{n}} \mid 0 \leq \alpha_{1}, \dots,\alpha_{n} < p^{e}, \lambda \in \mathcal{C} \}.
	\]
\end{remark}

\begin{lemma} \cite[cf. Example 3.0.5]{BKS}\label{trace}
	Let $\pi_{e}:F_{*}^{e}R \rightarrow R$ be the projection map onto the free summand $RF_{*}^{e} x_{1}^{p^{e}-1} \dots x_{n}^{p^{e}-1}$. Then $\homm_{R}(F_{*}^{e}R,R)$ is generated by $\pi_{e}$ as an $F_{*}^{e}R$-module.
\end{lemma}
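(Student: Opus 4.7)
The plan is to exhibit, for an arbitrary $\psi \in \homm_R(F_*^e R, R)$, a single element $r \in R$ satisfying $\psi = (F_*^e r) \cdot \pi_e$. Unwinding the $F_*^e R$-action, this amounts to $\psi(F_*^e y) = \pi_e(F_*^e r y)$ for every $y \in R$, and by $R$-linearity it is enough to verify the identity on the free basis $\mathcal{B}$ from Remark~\ref{rfree}. So I would set $r_{j,\alpha'} := \psi(F_*^e \lambda_j x^{\alpha'})$ for $\lambda_j \in \mathcal{C}$ and $0 \leq \alpha'_i < p^e$, and look for $r$ making $\pi_e(F_*^e r\,\lambda_j x^{\alpha'}) = r_{j,\alpha'}$ hold simultaneously.

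For the bookkeeping I would parametrize $r = \sum_{i,\beta} u_{i,\beta}^{p^e} \lambda_i x^\beta$ with unknowns $u_{i,\beta} \in R$, and rewrite $r\,\lambda_j x^{\alpha'}$ in its canonical $R^{p^e}$-basis expansion so that $\pi_e$ merely reads off the coefficient of $1 \cdot x^{p^e-1}$. The monomial part is routine: since each $\beta_i + \alpha'_i$ lies in $[0,2p^e-2]$, its residue mod $p^e$ equals $p^e-1$ only when $\beta_i = p^e-1-\alpha'_i$, and the carry is forced to zero. Inserting the structure constants $\lambda_i \lambda_j = \sum_k m_{ij,k}^{p^e} \lambda_k$ with $m_{ij,k} \in \Bbbk$ and extracting $p^e$-th roots collapses the requirement, one system per $\alpha'$, into the $|\mathcal{C}| \times |\mathcal{C}|$ linear system
\[
	\sum_{i} u_{i,\,(p^e-1,\ldots,p^e-1)-\alpha'}\, m_{ij,1} \;=\; r_{j,\alpha'} \qquad (\lambda_j \in \mathcal{C}),
\]
with common coefficient matrix $M = (m_{ij,1})$.

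The main obstacle, and the heart of the argument, is invertibility of $M$. Taking entrywise $p^e$-th powers, $M^{[p^e]}$ becomes the Gram matrix of the $\Bbbk^{p^e}$-bilinear form $(\lambda,\mu) \mapsto \epsilon_1(\lambda\mu)$ on $\Bbbk$, where $\epsilon_1 \colon \Bbbk \to \Bbbk^{p^e}$ projects onto the coordinate of $1 \in \mathcal{C}$. Nondegeneracy is then immediate: for any nonzero $\lambda \in \Bbbk$, the choice $\mu = \lambda^{-1}$ gives $\epsilon_1(\lambda\mu) = \epsilon_1(1) = 1 \neq 0$. This step tacitly requires $\mathcal{C}$ to be finite, i.e.\ $\Bbbk$ to be $F$-finite, which is the standing hypothesis in this setting; without it the cyclic generation claim already fails on cardinality grounds. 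With $M$ invertible, each system has a unique $R$-valued solution, and assembling the $u_{i,\beta}$ into $r = \sum_{i,\beta} u_{i,\beta}^{p^e} \lambda_i x^\beta$ yields an element of $R$ with $(F_*^e r) \cdot \pi_e = \psi$ on every basis vector, and hence everywhere.
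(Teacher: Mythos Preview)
Your argument is correct and genuinely different from the paper's. The paper proceeds by writing down, for each basis element $F_*^e\lambda x^\alpha\in\mathcal{B}$, an explicit premultiplier $z=\lambda^{-1}x^{(p^e-1)-\alpha}$ and asserting that $\pi_e(F_*^e z\,-)$ is the coordinate projection onto $RF_*^e\lambda x^\alpha$; surjectivity of $F_*^eR\to\homm_R(F_*^eR,R)$ then follows because every $R$-linear map is an $R$-combination of projections. Your route instead fixes an arbitrary $\psi$, expands the unknown $r$ in the $R^{p^e}$-basis, and reduces the equation $(F_*^er)\cdot\pi_e=\psi$ to the invertibility of the structure matrix $M=(m_{ij,1})$, which you establish by recognising $M^{[p^e]}$ as the Gram matrix of the nondegenerate form $(\lambda,\mu)\mapsto\epsilon_1(\lambda\mu)$ on $\Bbbk$. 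What this buys you is robustness in the field direction: the paper's explicit formula silently assumes that the $\epsilon_1$-coefficient of $\lambda^{-1}\mu$ vanishes whenever $\lambda,\mu\in\mathcal{C}$ are distinct, which holds for monomial-type bases but can fail for a general $\mathcal{C}$ containing $1$ (e.g.\ $\Bbbk=\mathbb{F}_3(t)$ with $\mathcal{C}=\{1,1+t,t^2\}$). Your nondegeneracy argument sidesteps this entirely, and you also make explicit the $F$-finiteness of $\Bbbk$ needed for the Gram matrix to be a finite invertible matrix, a hypothesis the paper leaves implicit.
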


\begin{proof}
	For each basis element $F_{*}^{e} \lambda x_{1}^{\alpha_{1}} \dots x_{n}^{\alpha_{n}} \in \mathcal{B}$, the projection map onto the free summand $RF_{*}^{e} \lambda x_{1}^{\alpha_{1}} \dots x_{n}^{\alpha_{n}}$ is defined by the rule $$F_{*}^{e}z.\pi_{e}(-)=\pi_{e}(F_{*}^{e}z.-),$$ where $z=\lambda^{-1}x_{1}^{p^{e}-1-\alpha_{1}} \dots x_{n}^{p^{e}-1-\alpha_{n}}$. Since we can obtain all of the projections in this way, the map
	\[
	\Phi:F_{*}^{e}R \rightarrow \homm_{R}(F_{*}^{e}R,R) \text{ defined by }\Phi(F_{*}^{e}u)=\phi_{u},
	\]
	where $\phi_{u}:F_{*}^{e}R \rightarrow R$ is the $R$-linear map $\phi_{u}(-)=\pi_{e}(F_{*}^{e}u-)$, is surjective. On the other hand, if $\Phi(F_{*}^{e}u)=0$ for some $u\in R$, then we have
	\begin{center} $\phi_{u}(F_{*}^{e}r)=\pi_{e}(F_{*}^{e}ur)=F_{*}^{e}u.\pi_{e}(F_{*}^{e}r)=0$ for all $r\in R$.
	\end{center} This means that $F_{*}^{e}u$ must be zero, and so $\Phi$ is injective.  Hence, $\Phi$ is an $F_{*}^{e}R$ isomorphism. In other words, $\pi_{e}$ generates $\homm_{R}(F_{*}^{e}R,R)$ as an $F_{*}^{e}R$-module.
\end{proof}

\begin{definition}
	Let the notation and situation be as in Lemma \ref{trace}. We call the map $\pi_{e}$ the trace map on $F_{*}^{e}R$, or just the trace map when the content is clear. 
\end{definition}

Next lemma provides an important property of the trace map $\pi_{e}$ which gives the relation between elements of $\homm_{R}(F_{*}^{e}R,R)$ and $\Ie_{e}(-)$ operation (cf. \cite[Claim 6.2.2]{BKS}).

\begin{lemma} \label{fa1}
	Let $A$ and $B$ be ideals of $R$. Then $\pi_{e}(F_{*}^{e}A) \subseteq B$ if and only if $A \subseteq B^{[p^{e}]}$.
\end{lemma}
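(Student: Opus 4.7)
The plan is to exploit the explicit free basis $\mathcal{B}$ for $F_{*}^{e}R$ over $R$ from Remark~\ref{rfree} and the characterisation of $\pi_{e}$ as the projection onto the summand indexed by $x_{1}^{p^{e}-1}\cdots x_{n}^{p^{e}-1}$. Every element $r\in R$ admits a unique expansion $r=\sum_{\beta} t_{\beta}^{p^{e}}\,\beta$ with $\beta$ running over basis monomials $\lambda x_{1}^{\alpha_{1}}\cdots x_{n}^{\alpha_{n}}$ (with $0\le \alpha_{i}<p^{e}$ and $\lambda\in\mathcal{C}$) and coefficients $t_{\beta}\in R$; equivalently $F_{*}^{e}r=\sum_{\beta} t_{\beta}\cdot F_{*}^{e}\beta$. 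Under this identification $\pi_{e}(F_{*}^{e}r)=t_{\beta^{*}}$ where $\beta^{*}=x_{1}^{p^{e}-1}\cdots x_{n}^{p^{e}-1}$ (with $\lambda=1$), and the claim reduces to showing that $a\in B^{[p^{e}]}$ if and only if every $t_{\beta}$ in the basis expansion of $a$ lies in $B$.

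For the $(\Leftarrow)$ direction, I would take $a\in A\subseteq B^{[p^{e}]}$ and write $a=\sum_{i} s_{i} b_{i}^{p^{e}}$ with $b_{i}\in B$, $s_{i}\in R$. Expanding each $s_{i}=\sum_{\beta} c_{i,\beta}^{p^{e}}\,\beta$ in the basis and using Frobenius additivity $(x+y)^{p^{e}}=x^{p^{e}}+y^{p^{e}}$ in characteristic $p$ to collect terms yields
\[
a=\sum_{\beta}\Bigl(\sum_{i} c_{i,\beta}\, b_{i}\Bigr)^{p^{e}}\beta,
\]
so by uniqueness $t_{\beta}=\sum_{i} c_{i,\beta} b_{i}\in B$. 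Specialising to $\beta=\beta^{*}$ gives $\pi_{e}(F_{*}^{e}a)\in B$, and since $a$ was arbitrary, $\pi_{e}(F_{*}^{e}A)\subseteq B$.

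For the $(\Rightarrow)$ direction, I would invoke Lemma~\ref{trace}: for every basis monomial $\beta=\lambda x_{1}^{\alpha_{1}}\cdots x_{n}^{\alpha_{n}}$, the projection onto the summand $R\cdot F_{*}^{e}\beta$ is $F_{*}^{e}z_{\beta}\cdot\pi_{e}$, namely $F_{*}^{e}r\mapsto \pi_{e}(F_{*}^{e}z_{\beta}r)$ where $z_{\beta}=\lambda^{-1}x_{1}^{p^{e}-1-\alpha_{1}}\cdots x_{n}^{p^{e}-1-\alpha_{n}}$. Applied to $F_{*}^{e}a$ with $a\in A$, this recovers the coefficient $t_{\beta}$ as $\pi_{e}(F_{*}^{e}z_{\beta}a)$; since $A$ is an ideal, $z_{\beta}a\in A$, so $t_{\beta}\in\pi_{e}(F_{*}^{e}A)\subseteq B$. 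Having all coefficients in $B$ forces $a=\sum_{\beta} t_{\beta}^{p^{e}}\,\beta\in B^{[p^{e}]}$, hence $A\subseteq B^{[p^{e}]}$. The only real subtlety is the characteristic-$p$ rearrangement in the first direction and recognising, in the second, that the ideal property of $A$ together with Lemma~\ref{trace} lets the single hypothesis on $\pi_{e}$ control every coefficient in the basis expansion.
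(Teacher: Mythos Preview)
Your proof is correct and follows essentially the same approach as the paper: the $(\Rightarrow)$ direction is identical in substance, using Lemma~\ref{trace} to recover every basis coefficient $t_{\beta}$ of $a$ as $\pi_{e}(F_{*}^{e}z_{\beta}a)\in\pi_{e}(F_{*}^{e}A)\subseteq B$ and then reading off $a\in B^{[p^{e}]}$. The only difference is in the $(\Leftarrow)$ direction, where the paper replaces your explicit coefficient computation by the one-line $R$-linearity argument $\pi_{e}(F_{*}^{e}A)\subseteq\pi_{e}(F_{*}^{e}B^{[p^{e}]})=\pi_{e}(B\,F_{*}^{e}R)=B\,\pi_{e}(F_{*}^{e}R)\subseteq B$.
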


\begin{proof}
	$(\Rightarrow)$ Since $R$ Noetherian, $A$ is finitely generated, and since $\pi_{e}$ is $R$-linear we may assume that $A$ is a principal ideal, i.e. $A=aR$ for some $a \in R$. Now since $F_{*}^{e}R$ is a free $R$-module with basis $\mathcal{B}$ as in Remark \ref{rfree}, $F_{*}^{e}a=\sum_{i}r_{i}F_{*}^{e}g_{i}$ for some $r_{i} \in R$ and $F_{*}^{e}g_{i} \in \mathcal{B}$. On the other hand, by Lemma \ref{trace}, $\pi_{e}(F_{*}^{e}z_{i}a)=r_{i}$ for some $z_{i}\in R$. This implies that $\pi_{e}(F_{*}^{e}Ra)=\langle r_{i} \rangle$. Then by the assumption $\pi_{e}(F_{*}^{e}A)=\langle r_{i}\rangle \subseteq B$, and since $F_{*}^{e}a=F_{*}^{e}\sum_{i}r_{i}^{p^{e}}g_{i}$ we have $a=\sum_{i}r_{i}^{p^{e}}g_{i} \in B^{[p^{e}]}$. Hence, $A \subseteq B^{[p^{e}]}$.
	
	$(\Leftarrow)$ Assume first that $A \subseteq B^{[p^{e}]}$ which implies that $F_{*}^{e}A \subseteq F_{*}^{e}B^{[p^{e}]}$. Therefore,
	\[
	\pi_{e}(F_{*}^{e}A) \subseteq \pi_{e}(F_{*}^{e}B^{[p^{e}]})=\pi_{e}(BF_{*}^{e}R)=B\pi_{e}(F_{*}^{e}R)\subseteq B.
	\]
\end{proof}

\begin{corollary}\label{fac}
	Let $A$ be an ideal of $R$, and let $\phi \in \homm_{R}(F_{*}^{e}R,R)$ be such that $\phi(-)=\pi_{e}(F_{*}^{e}u-)$ for some $u\in R$. Then $\phi(F_{*}^{e}A)=\pi_{e}(F_{*}^{e}uA)=I_{e}(uA)$ and $\star$-closure of $A$ gives the smallest $\phi$-compatible ideal containing $A$.	
\end{corollary}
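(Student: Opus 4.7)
The plan is to obtain both assertions by unwinding the definitions and applying Lemma \ref{fa1} twice. The first equality $\phi(F_{*}^{e}A)=\pi_{e}(F_{*}^{e}uA)$ is built into the hypothesis on $\phi$, since $\phi(F_{*}^{e}a)=\pi_{e}(F_{*}^{e}ua)$ for every $a\in A$ and $\phi$ is $R$-linear. So the real content in the first part is the equality $\pi_{e}(F_{*}^{e}uA)=\Ie_{e}(uA)$.

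To prove this equality I would first set $B:=\pi_{e}(F_{*}^{e}uA)$. By Lemma \ref{fa1} applied in the direction $(\Rightarrow)$, the trivial containment $\pi_{e}(F_{*}^{e}uA)\subseteq B$ forces $uA\subseteq B^{[p^{e}]}$; by minimality in the definition of $\Ie_{e}(-)$ this gives $\Ie_{e}(uA)\subseteq B=\pi_{e}(F_{*}^{e}uA)$. For the reverse containment, take $B':=\Ie_{e}(uA)$; by the definition of $\Ie_{e}(-)$ we have $uA\subseteq (B')^{[p^{e}]}$, and applying Lemma \ref{fa1} in the direction $(\Leftarrow)$ gives $\pi_{e}(F_{*}^{e}uA)\subseteq B'=\Ie_{e}(uA)$. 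This establishes $\phi(F_{*}^{e}A)=\pi_{e}(F_{*}^{e}uA)=\Ie_{e}(uA)$.

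For the second assertion, I would unpack $\phi$-compatibility. An ideal $J\subseteq R$ containing $A$ is $\phi$-compatible precisely when $\phi(F_{*}^{e}J)\subseteq J$, and by the first part of the corollary this is equivalent to $\Ie_{e}(uJ)\subseteq J$, which in turn is equivalent to $uJ\subseteq J^{[p^{e}]}$. Hence the collection of $\phi$-compatible ideals of $R$ containing $A$ coincides exactly with the collection of ideals $J\subseteq R$ satisfying $A\subseteq J$ and $uJ\subseteq J^{[p^{e}]}$. By the defining property of $\star$-closure, this collection has a unique minimal element, namely $A^{\star^{e}u}$, which is therefore the smallest $\phi$-compatible ideal containing $A$.

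There is no serious obstacle here; the only thing to be careful about is using Lemma \ref{fa1} as a genuine biconditional in both directions and making sure that the translation between $\phi$-compatibility and the inclusion $uJ\subseteq J^{[p^{e}]}$ uses the first half of the corollary applied to $J$ itself (not merely to $A$), so that the characterization of $\phi$-compatible ideals matches exactly the condition used in the definition of $\star$-closure.
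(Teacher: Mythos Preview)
Your proof is correct and follows essentially the same approach as the paper: both arguments derive the first equality from Lemma~\ref{fa1} and then translate $\phi$-compatibility into the condition $uJ\subseteq J^{[p^{e}]}$ to match the definition of $\star$-closure. Your version is in fact more explicit---you supply both inclusions for $\pi_{e}(F_{*}^{e}uA)=\Ie_{e}(uA)$ and correctly phrase the second part for a general ideal $J\supseteq A$, whereas the paper's proof states only one direction and writes the compatibility equivalence with $A$ in place of $J$.
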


\begin{proof}
	Since $uA \subseteq I_{e}(uA)^{[p^{e}]}$, the first claim follows from Lemma \ref{fa1}. The second claim follow from the fact that
	\begin{align*}
		A \text{ is } \phi-\text{compatible} &\Leftrightarrow\phi(F_{*}^{e}A)=\pi_{e}(F_{*}^{e}uA)=I_{e}(uA) \subseteq A\\
		&\Leftrightarrow uA \subseteq A^{[p^{e}]}
	\end{align*}
\end{proof}

Next we recall Fedder's Lemma which translates the problem of finding compatible ideals of $R/I$ for an ideal $I$ to finding compatible ideals on $R$. In the case that $R$ is a Gorenstein local ring, this lemma was proved by R. Fedder in \cite{RF}.

\begin{lemma}\cite[Lemma 1.6]{RF}\cite[Lemma 6.2.1]{BKS} \label{fedder} Let $S=R/I$ for some ideal $I$ and $\pi_{e}$ the trace map, then for any $\phi \in \homm_{R}(F_{*}^{e}R,R)$ satisfies $\phi(F_{*}^{e}I)\subseteq I$ if and only if there exists an element $u\in (I^{[p^{e}]}:I)$ such that $\phi(-)=\pi_{e}(F_{*}u-)$. More generally, there exists an isomorphism of $F_{*}^{e}S$-modules
	\[
	\homm_{S}(F_{*}^{e}S,S)\cong \dfrac{\big(F_{*}^{e}(I^{[p^{e}]}:I)\big)}{\big(F_{*}^{e}I^{[p^{e}]}\big)}.
	\]
\end{lemma}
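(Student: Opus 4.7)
The plan is to first establish the element-wise criterion and then extract the module isomorphism from it by applying $\homm_{R}(F_{*}^{e}R,-)$ to the short exact sequence $0 \to I \to R \to S \to 0$. By Lemma \ref{trace}, any $\phi \in \homm_{R}(F_{*}^{e}R,R)$ has the form $\phi_{u} := \pi_{e}(F_{*}^{e}u\cdot -)$ for a unique $u \in R$, and therefore $\phi(F_{*}^{e}I)=\pi_{e}(F_{*}^{e}uI)$. Applying Lemma \ref{fa1} with $A=uI$ and $B=I$ shows that this image lies in $I$ if and only if $uI \subseteq I^{[p^{e}]}$, that is, $u \in (I^{[p^{e}]}:I)$. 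This proves the first assertion.

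For the isomorphism, I would consider the map
\[
\Psi: F_{*}^{e}(I^{[p^{e}]}:I) \longrightarrow \homm_{S}(F_{*}^{e}S,S), \qquad F_{*}^{e}u \longmapsto \overline{\phi_{u}},
\]
where $\overline{\phi_{u}}$ is the map induced by $\phi_{u}$ on the quotient $F_{*}^{e}S = F_{*}^{e}R/F_{*}^{e}I$ with target $S=R/I$. The first part guarantees $\overline{\phi_{u}}$ is well-defined, and $F_{*}^{e}R$-linearity of $\Psi$ is a direct unwinding of the module structures via the isomorphism $\Phi$ of Lemma \ref{trace}. Moreover, for $s\in I$ and $u\in (I^{[p^{e}]}:I)$ one has $us \in I\cdot(I^{[p^{e}]}:I)\subseteq I^{[p^{e}]}$, so $\phi_{us}(F_{*}^{e}R)=\pi_{e}(F_{*}^{e}usR)\subseteq I$ by Lemma \ref{fa1}; hence $F_{*}^{e}I$ acts by zero on the target, and $\Psi$ descends to an $F_{*}^{e}S$-linear map on the quotient modulo $F_{*}^{e}I^{[p^{e}]}$.

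Surjectivity is where the freeness of $F_{*}^{e}R$ enters: given $\psi \in \homm_{S}(F_{*}^{e}S,S)$, the composite $F_{*}^{e}R \twoheadrightarrow F_{*}^{e}S \xrightarrow{\psi} S$ lifts through $R \twoheadrightarrow S$ to some $\tilde\psi \in \homm_{R}(F_{*}^{e}R,R)$, because $F_{*}^{e}R$ is free over $R$ by Remark \ref{rfree}. By construction $\tilde\psi(F_{*}^{e}I)\subseteq I$, so by the first part $\tilde\psi=\phi_{u}$ for some $u \in (I^{[p^{e}]}:I)$, and $\Psi(F_{*}^{e}u)=\psi$. For the kernel, $\Psi(F_{*}^{e}u)=0$ means $\phi_{u}(F_{*}^{e}R)\subseteq I$, equivalently $\pi_{e}(F_{*}^{e}uR)\subseteq I$; Lemma \ref{fa1} (with $A=uR$ and $B=I$) turns this into $uR \subseteq I^{[p^{e}]}$, i.e.\ $u \in I^{[p^{e}]}$. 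Hence $\kerr \Psi = F_{*}^{e}I^{[p^{e}]}$, and the desired isomorphism follows.

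I do not expect a serious obstacle. The argument rests on three ingredients already in hand: Lemma \ref{trace} to rewrite Hom-sets in terms of ideals in $F_{*}^{e}R$, Lemma \ref{fa1} to translate between $\pi_{e}$-images and Frobenius powers, and the freeness of $F_{*}^{e}R$ over $R$ from Remark \ref{rfree} to produce the lift $\tilde\psi$. The only delicate point is the lifting step, which would fail without the freeness hypothesis, and this is precisely where the polynomial-ring setting of this section is essential.
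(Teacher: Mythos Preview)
Your proposal is correct and follows essentially the same route as the paper: both use Lemma~\ref{trace} and Lemma~\ref{fa1} for the first assertion, then define the map $F_{*}^{e}(I^{[p^{e}]}:I)\to\homm_{S}(F_{*}^{e}S,S)$, invoke freeness of $F_{*}^{e}R$ for surjectivity via a lift, and apply Lemma~\ref{fa1} again to identify the kernel as $F_{*}^{e}I^{[p^{e}]}$. Your write-up is in fact slightly more detailed (you spell out the $F_{*}^{e}S$-linearity check and the lifting diagram), but the argument is the same; note that the short exact sequence you mention in your opening plan is not actually used in what follows.
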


\begin{proof}
	By Lemma \ref{trace}, for any $\phi \in \homm_{R}(F_{*}^{e}R,R)$ there exists an element $u\in R$ such that $\phi(-)=\pi_{e}(F_{*}u-)$. Then by Lemma \ref{fa1},
	\[
	\phi(F_{*}^{e}I)=\pi_{e}(F_{*}^{e}uI)\subseteq I \Leftrightarrow uI \subseteq I^{[p^{e}]} \Leftrightarrow u \in (I^{[p^{e}]}:I).
	\]
	For the second claim, we shall show that the map $\Phi:F_{*}^{e}(I^{[p^{e}]}:I)\rightarrow\homm_{S}(F_{*}^{e}S,S)$ which sends $F_{*}^{e}z$ to the map $\pi_{e}(F_{*}^{e}z-)$ is surjective. It is easy to verify that this map is well-defined and $F_{*}^{e}R$-linear. Since $\homm_{R}(F_{*}^{e}S,S)=\homm_{S}(F_{*}^{e}S,S)$, by freeness of $F_{*}^{e}R$, for any map $\varphi \in \homm_{S}(F_{*}^{e}S,S)$ there always exists a map $\psi \in\homm_{R}(F_{*}^{e}R,R)$ such that $I$ is $\psi$-compatible. Namely, $\Phi$ is surjective. On the other hand, by Lemma \ref{fa1} again, $\kerr\Phi=(F_{*}^{e}I^{[p^{e}]})$, and the result follows by the first isomorphism theorem.
\end{proof}

\begin{lemma}\cite[Proposition 2.6.c]{K2}
	If $\phi$ is surjective, then the set of $\phi$-compatible ideals is a finite set of radicals closed under sum and primary decomposition.
\end{lemma}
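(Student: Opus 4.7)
The plan is to treat the four assertions in sequence, using the fact (from Lemma \ref{trace} and Corollary \ref{fac}) that $\phi(-)=\pi_e(F_*^e u\,-)$ for some $u\in R$, so $\phi$-compatibility of $J$ is equivalent to $uJ\subseteq J^{[p^e]}$. Closure under sum is then immediate: if $J_1,J_2$ are $\phi$-compatible,
\[
u(J_1+J_2)=uJ_1+uJ_2\subseteq J_1^{[p^e]}+J_2^{[p^e]}=(J_1+J_2)^{[p^e]}.
\]

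For the radical claim, the idea is to promote the surjectivity of $\phi$ to a Frobenius splitting of $R/J$. Choose $s\in R$ with $\phi(F_*^e s)=1$; for $\phi$-compatible $J$ the map descends to a surjective $\phi':F_*^e(R/J)\to R/J$, and setting $\psi'(\bar r):=\phi'(F_*^e \bar s\bar r)$ one computes
\[
\psi'(\bar r^{p^e})=\phi'(F_*^e \bar s\bar r^{p^e})=\phi'(\bar r\cdot F_*^e \bar s)=\bar r\,\phi'(F_*^e \bar s)=\bar r,
\]
giving a splitting of Frobenius on $R/J$. In particular Frobenius is injective on $R/J$, so $R/J$ is reduced and $J=\sqrt J$.

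For closure under primary decomposition, given the radical property, it suffices to show that the minimal primes of a $\phi$-compatible $J$ are themselves $\phi$-compatible. First I would observe that $\phi$-compatibility is preserved by colons: if $r\in(J:y)$ then $ury\in uJ\subseteq J^{[p^e]}$, and multiplying by $y^{p^e-1}$ yields $ury^{p^e}\in J^{[p^e]}$; since $R$ is regular, flatness of Frobenius gives $(J^{[p^e]}:y^{p^e})=(J:y)^{[p^e]}$, hence $u(J:y)\subseteq(J:y)^{[p^e]}$. Writing $J=P_1\cap\cdots\cap P_k$ with $P_i$ the minimal primes and picking $y_i\in\bigcap_{j\neq i}P_j\setminus P_i$ by prime avoidance (using minimality of $P_i$), one obtains $(J:y_i)=P_i$, so each $P_i$ is $\phi$-compatible.

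The main obstacle is finiteness. By the preceding steps every $\phi$-compatible ideal is the intersection of the $\phi$-compatible primes among its minimal primes, so it suffices to bound the number of $\phi$-compatible prime ideals. My plan is to follow the algorithmic strategy of \cite{K2}: for each $\phi$-compatible prime $P$, the induced surjection $F_*^e(R/P)\to R/P$ makes $R/P$ an $F$-pure domain, and the $\phi$-compatible primes strictly above $P$ arise as minimal primes of a star-closure computed from $P$ and $u$. Because ascending chains of $\phi$-compatible primes have length at most $\dim R$ and the branching at each stage is governed by the minimal primes of a single ideal, the enumeration tree is finite, yielding only finitely many $\phi$-compatible primes in total.
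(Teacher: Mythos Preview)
The paper does not prove this lemma; it is quoted from \cite{K2} without argument, so there is no ``paper's own proof'' to compare against. Your treatment of closure under sums, radicality, and closure under primary decomposition is correct and standard (the colon-ideal trick combined with flatness of Frobenius is exactly the right move).

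The finiteness argument, however, has a genuine gap. You assert that ``the $\phi$-compatible primes strictly above $P$ arise as minimal primes of a star-closure computed from $P$ and $u$,'' and that the branching at each node is therefore finite. But you never say \emph{which} ideal's star-closure, nor do you justify why every $\phi$-compatible prime minimally above $P$ must contain it. This is precisely the nontrivial content: in the paper's own development it appears only \emph{after} the present lemma, as Proposition~\ref{katzsch}, where one shows that such a $Q$ must contain either the defining ideal of the singular locus of $R/P$ or the colon ideal $(uR+P^{[p^e]}):(P^{[p^e]}:P)$, and the proof of the singular-locus case passes through completion and invokes \cite[Theorem~4.1]{K3}. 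So your sketch is circular relative to the paper's logical order, and in any case the branching is governed by \emph{two} ideals, not one.

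If you want a self-contained finiteness proof here, you need an independent argument---for instance the Kumar--Mehta/Schwede approach showing that the compatibly split primes of a Frobenius-split variety are contained in a fixed proper closed subscheme determined by the splitting, and then inducting on dimension. As written, your final paragraph is a plan rather than a proof.
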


For $\phi$-compatible prime ideals $P\subsetneq Q$, we say that $Q$ minimally contains $P$ if there is no $\phi$-compatible prime ideal strictly between $P$ and $Q$. For a given $\phi$-compatible prime ideal $P$, next proposition shows that how to compute $\phi$-compatible prime ideals which minimally contain $P$, and we turn it into an algorithm (cf. \cite[Theorem 4.1]{K3} and \cite[Section 4]{K2}).

\begin{proposition} \label{katzsch}
	Let $\phi:F_{*}^{e}R \rightarrow R$ be an $R$-linear map where $\phi(-)=\pi_{e}(F_{*}^{e}u-)$ for some $u\in R$. Let $P$ and $Q$ be $\phi$-compatible prime ideals such that $Q$ minimally contains $P$, and let $J$ be the ideal whose image in $R/P$ defines the singular locus of $R/P$. Then:
	\begin{enumerate}
		\item If $(P^{[p^{e}]}:P) \subseteq (Q^{[p^{e}]}:Q)$ then $J \subseteq Q$,
		\item If $(P^{[p^{e}]}:P) \nsubseteq (Q^{[p^{e}]}:Q)$ then $(uR+P^{[p^{e}]}):(P^{[p^{e}]}:P) \subseteq Q$.
	\end{enumerate} 
\end{proposition}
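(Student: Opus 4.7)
The plan is to handle the two parts separately. Part (2) admits a direct colon-ideal manipulation using the flatness of Frobenius, while part (1) is established by contrapositive via localization at $Q$ to a computation with the trace map on a regular local ring.

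For part (2), I would fix a witness $v \in (P^{[p^{e}]}:P) \setminus (Q^{[p^{e}]}:Q)$ provided by the hypothesis, then choose $q \in Q$ with $vq \notin Q^{[p^{e}]}$. For any $a \in (uR + P^{[p^{e}]}):(P^{[p^{e}]}:P)$, write $av = ru + p$ with $r \in R$ and $p \in P^{[p^{e}]}$. Multiplying by $q$ gives $avq = ruq + pq$, and both summands lie in $Q^{[p^{e}]}$: the first because $\phi$-compatibility of $Q$ forces $uq \in Q^{[p^{e}]}$, the second because $P \subseteq Q$ so $P^{[p^{e}]} \subseteq Q^{[p^{e}]}$. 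Thus $avq \in Q^{[p^{e}]}$. Since $R$ is regular, Frobenius is flat and hence $\ass(R/Q^{[p^{e}]}) = \ass(R/Q) = \{Q\}$, so any element outside $Q$ is a non-zero-divisor modulo $Q^{[p^{e}]}$. If $a \notin Q$, this would force $vq \in Q^{[p^{e}]}$, contradicting the choice of $q$; hence $a \in Q$.

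For part (1), I would argue the contrapositive: assuming $J \nsubseteq Q$, I must exhibit an element of $(P^{[p^{e}]}:P)$ that is not in $(Q^{[p^{e}]}:Q)$. Set $T := (R/P)_{Q}$, a regular local ring by the choice of $J$, with nonzero maximal ideal $\mathfrak{m}_{T}$ (since $P \subsetneq Q$). Localizing Lemma \ref{fedder} at $Q$ identifies $\homm_{T}(F_{*}^{e}T,T)$ with $(P^{[p^{e}]}:P)_{Q}/P^{[p^{e}]}_{Q}$ as $F_{*}^{e}T$-modules, and the analogue of Lemma \ref{trace} in the regular local setting shows this module is cyclic, generated by the trace $\bar{\pi}_{e}$ of $T$. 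Were $(P^{[p^{e}]}:P) \subseteq (Q^{[p^{e}]}:Q)$, then after localization every element of $\homm_{T}(F_{*}^{e}T,T)$ would send $F_{*}^{e}\mathfrak{m}_{T}$ into $\mathfrak{m}_{T}$; in particular $\bar{\pi}_{e}$ would. But for a regular system of parameters $x_{1}, \dots, x_{d}$ of $T$ (with $d \geq 1$), the element $y := x_{1}^{p^{e}-1} \cdots x_{d}^{p^{e}-1}$ lies in $\mathfrak{m}_{T}$ while $\bar{\pi}_{e}(F_{*}^{e}y) = 1 \notin \mathfrak{m}_{T}$, a contradiction producing the needed witness.

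The principal obstacle is to justify that Lemma \ref{trace}, stated for polynomial rings, applies to $T = (R/P)_{Q}$. I plan to bypass this by passing to the completion $\widehat{T}$: Cohen's structure theorem realizes $\widehat{T}$ as $\kappa[[y_{1},\dots,y_{d}]]$ over a coefficient field $\kappa$, and $F_{*}^{e}\widehat{T}$ is free with the analogous monomial basis, so Lemma \ref{trace} applies verbatim; faithful flatness of $T \to \widehat{T}$ transfers the non-containment $\bar{\pi}_{e}(F_{*}^{e}\mathfrak{m}_{T}) \not\subseteq \mathfrak{m}_{T}$ back to $T$. Incidentally, the minimality of $Q$ over $P$ is not actually invoked in either argument; it is assumed only because this proposition will drive the algorithm that descends between successive $\phi$-compatible primes.
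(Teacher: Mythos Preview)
Your argument is correct and in fact more self-contained than the paper's. The paper proves (1) by localizing at $Q$, completing, and then invoking \cite[Theorem 4.1]{K3} for power series rings to conclude $JS\subseteq QS$; for (2) it simply cites the same reference. You instead supply direct arguments in both cases. Your treatment of (2) is a clean colon-ideal computation using only that $uQ\subseteq Q^{[p^e]}$ and that $Q^{[p^e]}$ is $Q$-primary (a consequence of flatness of Frobenius), and it avoids any passage to the completion. For (1) you follow the same reduction to $\widehat{R_Q}$ as the paper, but rather than citing \cite{K3} you unpack the argument: the trace on the regular complete local quotient $\widehat{T}$ furnishes an element of $\homm_{\widehat T}(F_*^e\widehat T,\widehat T)$ not fixing the maximal ideal, which via Fedder's lemma (valid verbatim over power series rings) produces the required witness in $(P^{[p^e]}:P)\widehat{R_Q}\setminus (Q^{[p^e]}:Q)\widehat{R_Q}$; faithful flatness then pulls the non-containment back to $R$. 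Two small remarks: you do not actually need the cyclicity of $\homm_{\widehat T}(F_*^e\widehat T,\widehat T)$, only the existence of the trace map, so the appeal to Lemma~\ref{trace} can be weakened; and your observation that minimality of $Q$ over $P$ is nowhere used is correct and worth recording.
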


\begin{proof}
	For (\textit{1}), let $R_{Q}$ be the localization of $R$ at $Q$, and let $S=\widehat{R_{Q}}$ be the completion of $R_{Q}$ with respect to the maximal ideal $QR_{Q}$. Since colon ideals, Frobenius powers and singular locus commute with localization and completion,
	\begin{align*}
	P \text{ and } Q \text{ are $\phi$-compatible} \Rightarrow & uP\subseteq P^{[p^{e}]} \text{ and } uQ \subseteq Q^{[p^{e}]}\\ \Rightarrow & uPS \subseteq PS^{[p^{e}]} \text{ and } uQS \subseteq QS^{[p^{e}]}\\ \Rightarrow & PS \text{ and } QS \text{ are $u$-special ideals of } S\\ &\text{with } QS \text{ is a prime ideal}
	\end{align*}
	and we have $(P^{[p^{e}]}:P)\subseteq (Q^{[p^{e}]}:Q) \Rightarrow (PS^{[p^{e}]}:PS)\subseteq (QS^{[p^{e}]}:QS)$. 
	Thus, by \cite[Theorem 4.1]{K3}, $JS \subseteq QS$, and  so $J \subseteq Q$. For (\textit{2}), we refer to \cite[Theorem 4.1]{K3}.	
\end{proof}

The following algorithm is the same algorithm described in \cite{K2}, which we call it here the Katzman-Schwede algorithm, finds all $\phi$-compatible prime ideals of $R$ which do not contain $\Ie_{e}(uR)$. We describe it here in a more algebraic language.

\subsection*{Input:}
An $R$ linear map $\phi:F_{*}R\rightarrow R$ where $\phi(-)=\pi_{e}(F_{*}^{e}u-)$ and $u\in R$.
\subsection*{Output:}
Set of all $\phi$-compatible prime ideals which do not contain $\Ie_{e}(uR)$.
\subsection*{Initialize:}
$\mathcal{A}_{R}=\{0\}$ and $\mathcal{B}=\emptyset$
\subsection*{Execute the following:}
While $\mathcal{A}_{R} \neq \mathcal{B}$ pick any $P \in \mathcal{A}_{R}-\mathcal{B}$, set $S=R/P$;
\begin{enumerate}
	\item Find the ideal $J \subseteq R$ whose image in $S$ defines the singular locus of $S$, and compute $J^{\star^{e} u}$,
	\item Find the minimal prime ideals of $J^{\star^{e} u}$, add them to $\mathcal{A}_{R}$,
	\item Compute the ideal $B:=((uR + P^{[p^{e}]}):(P^{[p^{e}]}:P))$, and compute $B^{\star^{e}u}$,
	\item Find the minimal prime ideals of $B^{\star^{e}u}$, add them to $\mathcal{A}_{R}$,
	\item Add $P$ to $\mathcal{B}$.
\end{enumerate}
Output $\mathcal{A}_{R}$ and stop.\\

The Katzman-Schwede algorithm produces a list of all $\phi$-compatible prime ideals which do not contain $L:=I_{e}(uR)$. Because for any prime ideal $Q$, whenever $L \subseteq Q$ we have the property that $Q$ is $\phi$-compatible if and only if $Q/L$ is $\phi'$-compatible where $\phi'$ is the induced map from $\phi$. But $Q/L$ is clearly compatible since $\phi'$ is zero. Thus, we do not need to assume that $\phi$ is surjective.

\begin{discussion}\label{diss}
	Let $R_{\mathfrak{p}}$ be a localization of $R$ at a prime ideal $\mathfrak{p}$, and let $\widehat{R_{\mathfrak{p}}}$ be the completion of $R_{\mathfrak{p}}$ with respect to the maximal ideal $\mathfrak{p}R_{\mathfrak{p}}$. We know that $\mathfrak{p}\widehat{R_{\mathfrak{p}}}$ is the maximal ideal of $\widehat{R_{\mathfrak{p}}}$. Now let $X_{1},\dots,X_{s}$ be minimal generators of $\mathfrak{p}\widehat{R_{\mathfrak{p}}}$, and let $\field{K}[\![X_{1},\dots,X_{s}]\!]$ be the formal power series ring over the residue field $\field{K}$ of $R_{\mathfrak{p}}$. By the Cohen's structure theorem $S\cong \field{K}[\![X_{1},\dots,X_{s}]\!]$. Let $E=E_{S}(S/\mathfrak{m})$ be the injective hull of the residue field. Then by \cite[13.5.3 Example]{RY}, $E$ is isomorphic to the module of inverse polynomials $\field{K}[X_{1}^{-},\dots ,X_{s}^{-}]$ whose $R$-module structure is extended from the following rule
	\begin{align*}
	(\lambda X_{1}^{\alpha_{1}}\dots X_{s}^{\alpha_{s}})&(\mu X_{1}^{-\nu_{1}}\dots X_{s}^{-\nu_{s}})\\&=
	\begin{cases}
	\lambda\mu X_{1}^{-\nu_{1}+\alpha_{1}}\dots X_{s}^{-\nu_{s}+\alpha_{s}} & \text{ if } \alpha_{i} < \nu_{i} \text{ for all } i \\
	0 & \text{ if } \alpha_{i} \geq \nu_{i} \text{ for any } i 
	\end{cases}
	\end{align*}
	for all $\lambda , \mu \in \Bbbk$, non-negative integers $\alpha_{1},\dots,\alpha_{s}$, and positive integers $\nu_{1},\dots,\nu_{s}$. Therefore, $E$ has a natural $S[T;f^{e}]$-module structure by extending additively the action $T(\lambda X_{1}^{\alpha_{1}}\dots X_{s}^{\alpha_{s}})=\lambda^{p^{e}} X_{1}^{-p^{e}\nu_{1}}\dots X_{s}^{-p^{e}\nu_{s}}$ for all $\lambda \in \field{K}$ and positive integers $\nu_{1},\dots,\nu_{s}$. Notice that $T:E\rightarrow E$ defines a Frobenius map. 
	
	Following \cite[Section 4]{K3}, we can also view the Katzman-Schwede algorithm from the point of Frobenius maps on injective hull of residue fields. Any $S[\Theta;f^{e}]$-module structure on $E$ can be given by $\Theta=uT$ for some $u\in S$ where $T$ is the natural action as above. We also know that the set of $S$-submodules of $E$ is $\{\ann_{E}J \mid J \text{ is an ideal of } R \}$. In addition, \cite[Theorem 4.3]{K1} shows that an $S$-submodule $\ann_{E}J \subseteq E$ is an $S[\Theta ; f^{e}]$-submodule if and only if $uJ \subseteq J^{[p^{e}]}$. Thus, the Katzman-Schwede algorithm finds all submodules $\ann_{E}P$ of $E$ which are preserved by the Frobenius map $\Theta$, under the assumptions that $P$ is a prime ideal of $S$ and the restriction of $\Theta$ to $\ann_{E}P$ is not the zero map (i.e. it finds all the $\Theta$-special prime ideals of $S$, see Definition \ref{uspecial}).
\end{discussion}

All of the operations used in the Katzman-Schwede algorithm are defined for localizations of $R$. Therefore, we can apply the algorithm to any localization of $R$ at a prime ideal. In the rest of this section, we investigate behaviour of the Katzman-Schwede algorithm under localization. Let $R_{\mathfrak{p}}$ be a localization of $R$ at a prime ideal $\mathfrak{p}$. Our next theorem gives the exact relation between the output sets $\mathcal{A}_{R}$ and $\mathcal{A}_{R_{\mathfrak{p}}}$ of the Katzman-Schwede algorithm for $R$ and $R_{\mathfrak{p}}$, respectively.

\begin{theorem}\label{al1} The Katzman-Schwede algorithm commutes with localization: for a given $u \in R$, if $\mathcal{A}_{R}$ and $\mathcal{A}_{R_{\mathfrak{p}}}$ are the output sets of the Katzman-Schwede algorithm for $R$ and $R_{\mathfrak{p}}$, respectively, then
	\[
	\mathcal{A}_{R_{\mathfrak{p}}}=\{ PR_{\mathfrak{p}} \mid P \in \mathcal{A}_{R} \text{ and } P \subseteq \mathfrak{p} \}
	\]
\end{theorem}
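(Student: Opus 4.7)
The plan is to prove the theorem by induction on the number of iterations of the Katzman-Schwede algorithm, showing that at every stage the set of primes processed by the algorithm for $R_{\mathfrak{p}}$ is precisely the image under localization of the set of primes processed by the algorithm for $R$ that are contained in $\mathfrak{p}$. The backbone of the argument is that every operation appearing in a single iteration commutes with localization at $\mathfrak{p}$.

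First I would verify the relevant commutation properties. The $\Ie_{e}(-)$ operation commutes with localization by Lemma \ref{ie}. Consequently, $\star^{e}u$-closure does as well: the proof of Lemma \ref{sp2} constructs $A^{\star^{e}u}$ as the stabilization of the ascending chain $A_{i+1} = \Ie_{e}(uA_{i}) + A_{i}$, and the identical induction with localization in place of completion yields $(A^{\star^{e}u})_{\mathfrak{p}} = (A_{\mathfrak{p}})^{\star^{e}u}$. The colon operation $(I:J)$ with $J$ finitely generated commutes with flat base change (and $R$ is Noetherian), Frobenius powers of finitely generated ideals clearly commute with localization, and the singular locus of $R/P$ commutes with localization at any prime containing $P$. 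Lastly, for any ideal $I \subseteq R$, the minimal primes of $IR_{\mathfrak{p}}$ are exactly the ideals $QR_{\mathfrak{p}}$ with $Q$ a minimal prime of $I$ contained in $\mathfrak{p}$.

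With these tools, the induction is transparent. The base cases $\mathcal{A}_{R} = \{0\}$ and $\mathcal{A}_{R_{\mathfrak{p}}} = \{0\}$ correspond under localization since $0 \subseteq \mathfrak{p}$ and $0R_{\mathfrak{p}} = 0$. For the inductive step, consider processing a prime $P \in \mathcal{A}_{R}$ with $P \subseteq \mathfrak{p}$; the algorithm adjoins to $\mathcal{A}_{R}$ the minimal primes of $J^{\star^{e}u}$, where $J$ is the ideal whose image in $R/P$ cuts out the singular locus, and the minimal primes of $B^{\star^{e}u}$, where $B = (uR + P^{[p^{e}]}):(P^{[p^{e}]}:P)$. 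By the commutation results above, the localizations of these two ideals at $\mathfrak{p}$ coincide with the analogous ideals produced by processing $PR_{\mathfrak{p}}$ in the algorithm for $R_{\mathfrak{p}}$, and the primes added to $\mathcal{A}_{R_{\mathfrak{p}}}$ in that step are exactly the extensions to $R_{\mathfrak{p}}$ of those minimal primes of $J^{\star^{e}u}$ and $B^{\star^{e}u}$ that are contained in $\mathfrak{p}$. Conversely, every prime processed by the algorithm for $R_{\mathfrak{p}}$ is of the form $QR_{\mathfrak{p}}$ for a unique prime $Q$ of $R$ with $Q \subseteq \mathfrak{p}$, and the induction hypothesis identifies $Q$ as an element of $\mathcal{A}_{R}$.

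The main obstacle will be ensuring that the two algorithms can be synchronized despite the freedom in the order in which primes are picked from $\mathcal{A} \setminus \mathcal{B}$. This is resolved by observing that the output of the algorithm is intrinsic: the while loop continues until every prime added to $\mathcal{A}$ has been transferred to $\mathcal{B}$, so the terminal set depends only on the seed $\{0\}$ and on the closure operation implicit in one iteration, not on the sequencing of picks. Hence the two inductive constructions agree at termination and yield the claimed equality $\mathcal{A}_{R_{\mathfrak{p}}} = \{ PR_{\mathfrak{p}} \mid P \in \mathcal{A}_{R}, \; P \subseteq \mathfrak{p} \}$.
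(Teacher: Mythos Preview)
Your proposal is correct and follows essentially the same approach as the paper: both arguments establish that each individual operation in one iteration of the algorithm (singular locus, Frobenius powers, colon ideals, $\star$-closure, and taking minimal primes) commutes with localization, and then conclude the equality of output sets. Your version is slightly more explicit in framing this as an induction on iterations and in addressing the order-independence of the while loop, but the substance is the same.
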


\begin{proof}
	We shall show that the Katzman-Schwede algorithm commutes with localization step by step. Since the ideal defining singular locus commutes with localization, so is step \textit{1.} Since Frobenius powers and colon ideals commute with localization under Noetherian hypothesis, so is step \textit{3.} Then by Lemma \ref{sp2}, $\star$-closure commutes with localization. Therefore, step \textit{2.} and \textit{4.} follow from the fact that primary decomposition commutes with localization.
	
	Let $P$ be a $\phi$-compatible prime ideal of $R$. Then since $uP\subseteq P^{[p^{e}]} \Leftrightarrow uPR_{\mathfrak{p}}\subseteq P^{[p^{e}]}R_{\mathfrak{p}}$, $PR_{\mathfrak{p}}$ is a $\phi$-compatible prime ideal of $R_{\mathfrak{p}}$. Since the Katzman-Schwede algorithm commutes with localization, $Q$ is a $\phi$-compatible prime ideal of $R$ minimally containing $P$ if and only if $QR_{\mathfrak{p}}$ is a $\phi$-compatible prime ideal of $R_{\mathfrak{p}}$ minimally containing $PR_{\mathfrak{p}}$. Hence, $\mathcal{A}_{R_{\mathfrak{p}}}=\{ PR_{\mathfrak{p}} \mid P \in \mathcal{A}_{R} \text{ and } P \subseteq \mathfrak{p} \}$.
\end{proof}

\section{A Generalization of the Katzman-Zhang Algorithm}
\label{section:Katzman-Zhang Algorithm}
Let $R=\Bbbk[x_{1},\dots,x_{n}]$ be a polynomial ring over a field of characteristic $p$ and $e$ be a positive integer. Let $R_{\mathfrak{p}}$ be a localization of $R$ at a prime ideal $\mathfrak{p}$, and let $S=\widehat{R_{\mathfrak{p}}}$ be the completion of $R_{\mathfrak{p}}$ with respect to the maximal ideal $\mathfrak{m}=\mathfrak{p}R_{\mathfrak{p}}$. Let $E=E_{S}(S/\mathfrak{m})$ be the injective hull of residue field of $S$. The purpose of this section is to generalize the algorithm defined in Section 6 of \cite{K3} to $R$, and show that it commutes with localization.

\begin{remark}
	Given an Artininan $S$-module $M$, we can embed $M$ in $E^{\alpha}$ for some positive integer $\alpha$, we can then embed $\coker(M\hookrightarrow E^{\alpha})$ in $E^{\beta}$ for some positive integer $\beta$. Continuing in this way, we get an injective resolution
	\[
	0\rightarrow M\rightarrow E^{\alpha}\xrightarrow{A^{t}}E^{\beta}\rightarrow \cdots
	\]
	of $M$, where $A$ is an $\alpha\times\beta$ matrix with entries in $S$ since $\homm_{S}(E^{\alpha},E^{\beta})\cong\homm_{S}(S^{\alpha},S^{\beta})$, and so $M\cong \ker A^{t}$.
\end{remark}

\begin{remark} \label{skew}
	Let  $T: E\rightarrow E$ be as in Discussion \ref{diss}. We can extend this natural $S[T;f^{e}]$-module structure on $E$ to $E^{\alpha}$ which is given by 
	\[
	T \left( \begin{array}{ccc}
	a_{1}  \\
	\vdots  \\
	a_{\alpha} \end{array} \right) = 
	\left( \begin{array}{ccc}
	Ta_{1}  \\
	\vdots  \\
	Ta_{\alpha} \end{array} \right)
	\]
	for all $a_{1},\dots,a_{\alpha}\in E$.
\end{remark}

\begin{remark}
Following \cite[Section 3]{K1}, let $\mathfrak{C}^{e}$ be the category of Artinian $S[\theta;f^{e}]$-modules and $\mathfrak{D}^{e}$ be the category of $S$-linear maps $M\rightarrow F_{S}^{e}(M)$ where $M$ is Noetherian $S$-module and a morphism between $M\rightarrow F_{S}^{e}(M)$ and $N\rightarrow F_{S}^{e}(N)$ is a commutative diagram of $S$-linear maps
$$\begin{array}[c]{ccc}
M&\stackrel{\phi}{\longrightarrow}&N\\
\downarrow\scriptstyle{}&&\downarrow\scriptstyle{}\\
F_{S}^{e}(M)&\stackrel{F_{S}^{e}(\phi)}{\longrightarrow}&F_{S}^{e}(N)
\end{array}$$
We define the functor $\Delta^{e}:\mathfrak{C}^{e}\rightarrow \mathfrak{D}^{e}$ as follows: given an $e$-th Frobenius map $\theta:M\rightarrow M$, we can obtain an $R$-linear map $\phi:F_{*}^{e}R\otimes M\rightarrow M$ such that $\phi(F_{*}r\otimes m)=r\theta(m)$ for all $r\in R$, $m\in M$. Applying Matlis duality to this map gives the $R$-linear map $M^{\vee}\rightarrow (F_{*}^{e}R\otimes M)^{\vee}\cong F_{*}^{e}R\otimes M^{\vee}$ where the last isomorphism is described in \cite[Lemma 4.1]{L1}. Conversely, we define the functor $\Psi^{e}:\mathfrak{D}^{e}\rightarrow \mathfrak{C}^{e}$ as follows: given a Noetherian $R$-module $N$ with an $R$-linear map $N\rightarrow F_{R}^{e}(N)$. Applying Matlis duality to this map gives the $R$-linear map $\varphi:F_{R}^{e}(N^{\vee})\cong F_{R}^{e}(N)^{\vee}\rightarrow N^{\vee}$ where the first isomorphism is the composition $F_{R}^{e}(N^{\vee})\cong F_{R}^{e}(N^{\vee})^{\vee\vee}\cong F_{R}^{e}(N^{\vee\vee})^{\vee}\cong F_{R}^{e}(N)^{\vee}$. Then we define the action of $\theta$ on $N^{\vee}$ by defining $\theta(n)=\varphi(1\otimes n)$ for all $n\in N^{\vee}$.
\end{remark}

The mutually inverse exact functors $\Delta^{e}$ and $\Psi^{e}$ are extensions of Matlis duality which also keep track of Frobenius actions.

\begin{proposition}\label{art1}\cite[Proposition 2.1]{K3}
	Let $M\cong\ker A^{t}$ be an Artininan $S$-module where $A$ is an $\alpha\times\beta$ matrix with entries in $S$. For a given $e$-th Frobenius map on $M$, $\Delta^{e}(M)\in\homm_{S}(\coker A,\coker A^{[p^{e}]})$ and is given by an $\alpha\times\alpha$ matrix $U$ such that $U\im A\subseteq\im A^{[p^{e}]}$, conversely any such $U$ defines an $S[\Theta;f^{e}]$-module structure on $M$ which is given by the restriction to $M$ of the Frobenius map $\Theta:E^{\alpha} \rightarrow E^{\alpha}$ defined by $\Theta(a)=U^{t}T(a)$ for all $a\in E^{\alpha}$.
\end{proposition}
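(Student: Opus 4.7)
The plan is to pass the data of an $e$-th Frobenius map on $M$ through the mutually inverse functors $\Delta^{e}$ and $\Psi^{e}$ described in the preceding remark, and then make the correspondence concrete by choosing presentations. First I would apply Matlis duality $(-)^{\vee}=\homm_{S}(-,E)$ to the injective presentation $0\rightarrow M\rightarrow E^{\alpha}\xrightarrow{A^{t}}E^{\beta}$, using the identification $\homm_{S}(E^{\alpha},E^{\beta})\cong\homm_{S}(S^{\beta},S^{\alpha})$ to obtain a finite presentation $S^{\beta}\xrightarrow{A}S^{\alpha}\rightarrow M^{\vee}\rightarrow 0$, so that $M^{\vee}\cong\coker A$. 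Applying the exact Frobenius functor $F_{S}^{e}$ and using that the induced map $F_{S}^{e}(S^{\beta})\cong S^{\beta}\rightarrow F_{S}^{e}(S^{\alpha})\cong S^{\alpha}$ is multiplication by $A^{[p^{e}]}$, this yields $F_{S}^{e}(M^{\vee})\cong\coker A^{[p^{e}]}$.

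Next I would unpack the definition of $\Delta^{e}$. A Frobenius map $\theta:M\rightarrow M$ corresponds to the $S$-linear map $\phi:F_{*}^{e}S\otimes_{S}M\rightarrow M$ sending $F_{*}^{e}r\otimes m\mapsto r\theta(m)$. Dualizing and invoking the isomorphism $(F_{*}^{e}S\otimes M)^{\vee}\cong F_{*}^{e}S\otimes M^{\vee}\cong F_{S}^{e}(M^{\vee})$ of \cite[Lemma 4.1]{L1} produces an $S$-linear map $M^{\vee}\rightarrow F_{S}^{e}(M^{\vee})$, which under the identifications above belongs to $\homm_{S}(\coker A,\coker A^{[p^{e}]})$. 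Lifting along the surjection $S^{\alpha}\twoheadrightarrow\coker A^{[p^{e}]}$ via projectivity of $S^{\alpha}$ yields an $\alpha\times\alpha$ matrix $U$ acting by left multiplication, and the requirement that the lift be well-defined modulo $\im A$ forces exactly $U\im A\subseteq\im A^{[p^{e}]}$. Conversely, any such $U$ manifestly descends to an element of $\homm_{S}(\coker A,\coker A^{[p^{e}]})$, and applying $\Psi^{e}$ recovers a Frobenius action on $M$.

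To identify this Frobenius action explicitly as $\Theta(a)=U^{t}T(a)$ on $E^{\alpha}$, I would verify that the formula gives an additive map satisfying $\Theta(sa)=s^{p^{e}}\Theta(a)$ (immediate from the Frobenius-linearity of $T$) and that it preserves $M=\kerr A^{t}$. By hypothesis, $UA=A^{[p^{e}]}C$ for some $\beta\times\beta$ matrix $C$, so $A^{t}U^{t}=C^{t}(A^{[p^{e}]})^{t}$; combining this with the componentwise identity $(A^{[p^{e}]})^{t}T=TA^{t}$, which is just $T(rx)=r^{p^{e}}T(x)$ applied entry by entry, gives
\[
A^{t}\Theta(a)=A^{t}U^{t}T(a)=C^{t}(A^{[p^{e}]})^{t}T(a)=C^{t}T(A^{t}a),
\]
which vanishes on $\kerr A^{t}$. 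Hence $\Theta$ restricts to a Frobenius map on $M$ as claimed.

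The main obstacle will be checking that the abstract Matlis-dual construction of $\Delta^{e}$ agrees with the concrete formula $\Theta=U^{t}T$, rather than merely producing isomorphic data. This amounts to tracing the Lyubeznik isomorphism $F_{*}^{e}S\otimes M^{\vee}\cong(F_{*}^{e}S\otimes M)^{\vee}$ through the two presentations and accounting for the transposition that appears when passing between an endomorphism of $E^{\alpha}$, acting on column vectors, and its Matlis dual acting on $S^{\alpha}$ in the opposite variance. Once that bookkeeping is settled, the mutual inverseness of $\Delta^{e}$ and $\Psi^{e}$ delivers the stated bijection between $e$-th Frobenius maps on $M$ and $\alpha\times\alpha$ matrices $U$ with $U\im A\subseteq\im A^{[p^{e}]}$.
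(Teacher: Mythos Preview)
The paper does not supply its own proof of this proposition; it is quoted directly from \cite[Proposition 2.1]{K3} and used as a black box. Your proposal reconstructs the standard argument behind that citation: dualize the injective presentation to get $M^{\vee}\cong\coker A$, apply $F_{S}^{e}$ to obtain $\coker A^{[p^{e}]}$, and then lift the resulting map along $S^{\alpha}\twoheadrightarrow\coker A^{[p^{e}]}$ to a matrix $U$. This is correct, and your verification that $\Theta=U^{t}T$ preserves $\kerr A^{t}$ via the identity $TA^{t}=(A^{[p^{e}]})^{t}T$ is exactly the right computation. The caveat you flag---that one must check the abstract $\Psi^{e}$ construction actually returns $U^{t}T$ and not merely something isomorphic---is the only genuine subtlety, and it is resolved by tracking the transpose that arises when identifying $\homm_{S}(E^{\alpha},E)$ with $S^{\alpha}$ on each side; once conventions are fixed this is routine.
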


\begin{remark}\label{rm1}
	By Proposition \ref{art1}, for any Artinian submodule $M\cong\ker A^{t}$ of $E^{\alpha}$ with a given $S[\Theta;f^{e}]$-module structure, where $\Theta=U^{t}T$, there is a submodule $V$ of $S^{\alpha}$ such that $M=\ann_{E^{\alpha}}V^{t}:=\{a\in E^{\alpha} \mid V^{t}a=0 \}$ and $UV \subseteq V^{[p^{e}]}$, (in fact $V=\im A$). For simplicity, for $V \subseteq S^{\alpha}$ we denote $E(V)=\ann_{E^{\alpha}}V^{t}$.
\end{remark}

\begin{lemma} \label{nil1}\cite[Lemma 3.6, Lemma 3.7]{K3}
	Let $\Theta=U^{t}T:E^{\alpha}\rightarrow E^{\alpha}$ be a Frobenius map where $U$ is an $\alpha\times\alpha$ matrix with entries in $S$ and let $K\subset S^{\alpha}$. Then
	\begin{enumerate}
		\item $E(I_{e}(\im U^{[p^{e}-1]}U^{[p^{e}-2]}\cdots U))=\{a\in E^{\alpha} \mid \Theta^{e}(a)=0 \}$,
		\item $E(I_{1}(UK))=\{ a\in E^{\alpha} \mid \Theta(a)\in E(K) \}$.
	\end{enumerate}
\end{lemma}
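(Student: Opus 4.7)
My plan is to reduce both parts to a single general identity: for any submodule $W \subseteq S^\alpha$ and any $k \ge 1$,
\[
\bigl\{\, a \in E^\alpha \mid w^{t} T^{k}(a) = 0 \text{ for every } w \in W \,\bigr\} \;=\; E(I_k(W)). \tag{$*$}
\]
Granting $(*)$, part (2) is immediate: unfolding $\Theta(a) \in E(K)$ as $K^{t} U^{t} T(a) = (UK)^{t} T(a) = 0$ and applying $(*)$ with $k = 1$, $W = UK$ gives $a \in E(I_1(UK))$. For part (1), I would first establish by induction on $e$, using the semilinearity $T(rc) = r^{p} T(c)$ applied componentwise on $E^\alpha$, that
\[
\Theta^{e}(a) \;=\; \bigl(U^{[p^{e-1}]}\,U^{[p^{e-2}]}\cdots U^{[p]}\, U\bigr)^{t}\, T^{e}(a).
\]
Then $\Theta^{e}(a) = 0$ iff $w^{t} T^{e}(a) = 0$ for every $w$ in the column span of $U^{[p^{e-1}]}\cdots U$, and $(*)$ with $k = e$ finishes the proof.

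To prove $(*)$, I fix a free basis $\mathcal{B}$ of $S$ over $S^{p^{k}}$ consisting of monomials $b = X_1^{j_1}\cdots X_s^{j_s}$ with $0 \le j_i < p^{k}$. Each $w \in W$ has a unique expression $w = \sum_{b \in \mathcal{B}} u_b^{[p^{k}]}\, b$ with $u_b \in S^\alpha$, and by Proposition~2.9(2) (cited from \cite{K3}) the collection $\{u_b\}_{b\in\mathcal{B}}$ generates $I_k(\langle w\rangle)$; letting $w$ range over a generating set of $W$ then recovers $I_k(W)$. Using $T^{k}(rc) = r^{p^{k}} T^{k}(c)$ componentwise,
\[
w^{t} T^{k}(a) \;=\; \sum_{b \in \mathcal{B}} b \cdot T^{k}\!\bigl(u_b^{\,t}\, a\bigr) \quad \text{in } E.
\]
In the inverse-polynomial realization of $E$ from Discussion~\ref{diss}, every nonzero monomial of $E$ has the form $X_1^{-\nu_1}\cdots X_s^{-\nu_s}$ with $\nu_i \ge 1$ for every $i$. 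Hence every monomial appearing in $T^{k}(u_b^{t} a)$ has negative exponents $p^{k}\nu_i \ge p^{k} > j_i$, so multiplication by $b$ produces a nonzero monomial whose negative exponents are $\equiv -j_i \pmod{p^{k}}$. Since distinct $b \in \mathcal{B}$ correspond to distinct residue tuples modulo $p^{k}$, the summands $b \cdot T^{k}(u_b^{t} a)$ lie in $\Bbbk$-linearly independent subspaces of $E$, so the vanishing of the whole sum forces each summand to vanish. Furthermore $T^{k}$ is injective on $E^\alpha$ (it sends distinct monomials to distinct nonzero ones), and by the exponent bound just noted multiplication by each $b$ is injective on $T^{k}(E^\alpha)$; combining these, $u_b^{t} a = 0$ for every $b \in \mathcal{B}$, i.e.\ $a \in E(I_k(\langle w \rangle))$. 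Quantifying over $w \in W$ yields $(*)$.

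The main obstacle I anticipate is the monomial bookkeeping in the last paragraph — specifically, invoking the correct inverse-polynomial model of $E$ (in which every nonzero monomial has all $\nu_i \ge 1$) in order to secure both the linear independence of the $b$-indexed summands and the injectivity of multiplication by $b$ on $T^{k}(E^\alpha)$. Once that model is fixed and Proposition~2.9 is used to identify $I_k(\langle w\rangle)$ with the submodule generated by the $u_b$, the remainder of the argument, as well as the reduction of part (1) to $(*)$ via the iteration formula for $\Theta^{e}$, is a direct computation.
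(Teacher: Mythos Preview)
The paper does not supply its own proof of this lemma; it is imported from \cite[Lemmas~3.6 and~3.7]{K3}. Your strategy---deriving the iteration formula $\Theta^{e}(a)=(U^{[p^{e-1}]}\cdots U)^{t}T^{e}(a)$, reducing both statements to the single identity $(*)$, and then proving $(*)$ by expanding $w$ in a $p^{k}$-power basis and reading off the result in the inverse-polynomial model of $E$---is correct and is in substance the argument given in \cite{K3}.

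One point to tighten. Your basis $\mathcal{B}$ consists only of monomials $X_{1}^{j_{1}}\cdots X_{s}^{j_{s}}$ with $0\le j_{i}<p^{k}$; this is a free basis of $S$ over $S^{p^{k}}$ only when the residue field $\field{K}$ of $R_{\mathfrak p}$ is perfect. In general (compare Remark~\ref{rfree}) one must enlarge $\mathcal{B}$ to $\{\lambda\,X_{1}^{j_{1}}\cdots X_{s}^{j_{s}}:0\le j_{i}<p^{k},\ \lambda\in\mathcal{C}\}$ where $\mathcal{C}$ is a basis of $\field{K}$ over $\field{K}^{p^{k}}$. Your linear-independence step then needs one more sentence: for a fixed exponent tuple $(j_{1},\dots,j_{s})$ the summands indexed by the various $\lambda\in\mathcal{C}$ contribute coefficients of the form $\lambda\cdot(\text{element of }\field{K}^{p^{k}})$ at each monomial of $E$, and these vanish simultaneously only if each does, by the $\field{K}^{p^{k}}$-independence of $\mathcal{C}$. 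With that amendment (and noting that the exponents in part~(1) should read $U^{[p^{e-1}]}\cdots U$, consistent with your iteration formula and with Theorem~\ref{2}), your proof is complete.
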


\begin{remark}
	Let $M=\ann_{E^{\alpha}}V^{t}$ be as in Remark \ref{rm1}. Then $\ann_{S}M=\ann_{S}S^{\alpha}/V$ because $\ann_{S}M \subseteq \ann_{S}M^{\vee} \subseteq \ann_{S}M^{\vee\vee}\cong\ann_{S}M$.
\end{remark}

\begin{definition} \label{uspecial}
	Let $\Theta=U^{t}T: E^{\alpha} \rightarrow E^{\alpha}$ be a Frobenius map, where $U$ is an $\alpha \times \alpha$ matrix with entries in $S$. We call an ideal of $S$ a $\Theta$-special ideal if it is an annihilator of an $S[\Theta;f]$-submodule of $E^{\alpha}$, equivalently if it is the annihilator of $S^{\alpha}/W$ for some $W\subset S^{\alpha}$ with $UW \subseteq W^{[p^{e}]}$.
\end{definition}

Notice that the concept of injective hull of the residue field is not available for polynomial rings. Therefore, we adapt above definition for a more general setting and define special ideals depending on a given square matrix as follows.

\begin{definition}
	Let $\mathcal{R}$ be $R$ or $R_{\mathfrak{p}}$ or $S$. For a given $\alpha\times\alpha$ matrix $U$ with entries in $\mathcal{R}$, we call an ideal of $\mathcal{R}$ a $U$-special ideal if it is the annihilator of $\mathcal{R}^{\alpha}/V$ for some submodule $V\subseteq \mathcal{R}^{\alpha}$  satisfying $UV\subseteq V^{[p^{e}]}$.
\end{definition}

Next we will provide some properties of special ideals. The following lemma gives the most important properties which are actually generalization of Lemma 3.8 and 3.10 in \cite{K3} to $R$ with similar proofs.

\begin{lemma}\label{sp1}
	Let $\mathcal{R}$ be $R$ or $R_{\mathfrak{p}}$ or $S$. Let $U$ be an $\alpha\times\alpha$ matrix with entries in $\mathcal{R}$ and $J$ be a $U$-special ideal of $\mathcal{R}$. Then
	\begin{enumerate}
		\item Associated primes of $J$ are $U$-special,
		\item $V=(JR^{\alpha})^{\star^{e} U}$ is the smallest submodule of $\mathcal{R}^{\alpha}$ such that $J=\ann_{\mathcal{R}}\mathcal{R}^{\alpha}/V$ and $UV\subseteq V^{[p^{e}]}$.
	\end{enumerate}
\end{lemma}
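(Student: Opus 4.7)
The plan is to dispatch $(2)$ first, since it isolates the universal property of the witness submodule and guides the choice to be made in $(1)$. For $(2)$, I would begin by noting that $V = (J\mathcal{R}^{\alpha})^{\star^{e}U}$ satisfies $UV \subseteq V^{[p^{e}]}$ and contains $J\mathcal{R}^{\alpha}$ by construction, so $J \subseteq \ann_{\mathcal{R}}(\mathcal{R}^{\alpha}/V)$. The reverse inclusion and the minimality claim would be settled in one stroke: since $J$ is $U$-special, some witness $V'$ with $\ann_{\mathcal{R}}(\mathcal{R}^{\alpha}/V') = J$ and $UV' \subseteq (V')^{[p^{e}]}$ exists, this $V'$ automatically contains $J\mathcal{R}^{\alpha}$ and is $U$-star-closed, and the universal property of star closure forces $V \subseteq V'$. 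From $V \subseteq V'$ one extracts $\ann_{\mathcal{R}}(\mathcal{R}^{\alpha}/V) \subseteq \ann_{\mathcal{R}}(\mathcal{R}^{\alpha}/V') = J$, yielding both the equality and the minimality assertion.

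For $(1)$, given $P \in \ass(\mathcal{R}/J)$, I would pick $r \in \mathcal{R} \setminus J$ with $P = (J :_{\mathcal{R}} r)$ and propose the candidate submodule $W := (V :_{\mathcal{R}^{\alpha}} r) = \{u \in \mathcal{R}^{\alpha} : ru \in V\}$, where $V$ is any $U$-special witness for $J$. The annihilator computation is then purely formal: $s \in \ann_{\mathcal{R}}(\mathcal{R}^{\alpha}/W)$ is equivalent to $rs\mathcal{R}^{\alpha} \subseteq V$, i.e.\ $rs \in J$, i.e.\ $s \in (J:r) = P$; and $W$ is a proper submodule precisely because $r \notin J$.

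The main obstacle will be verifying $UW \subseteq W^{[p^{e}]}$. Starting from $u \in W$, one has $rUu = U(ru) \in UV \subseteq V^{[p^{e}]}$, which shows $Uu \in (V^{[p^{e}]} :_{\mathcal{R}^{\alpha}} r)$. The task is then to pass from the colon by $r$ to the colon by $r^{p^{e}}$ and to recognise the result as $W^{[p^{e}]}$. For the first step, the elementary observation $rx \in V^{[p^{e}]} \Rightarrow r^{p^{e}}x = r^{p^{e}-1}(rx) \in V^{[p^{e}]}$ gives $(V^{[p^{e}]} :_{\mathcal{R}^{\alpha}} r) \subseteq (V^{[p^{e}]} :_{\mathcal{R}^{\alpha}} r^{p^{e}})$. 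For the second, I would exploit exactness of the Frobenius functor $F_{\mathcal{R}}^{e}$ (available because each of $R$, $R_{\mathfrak{p}}$, $S$ is regular) applied to the left exact sequence
\[
0 \to (V :_{\mathcal{R}^{\alpha}} r)/V \to \mathcal{R}^{\alpha}/V \xrightarrow{\cdot r} \mathcal{R}^{\alpha}/V,
\]
which produces the identity $(V :_{\mathcal{R}^{\alpha}} r)^{[p^{e}]} = (V^{[p^{e}]} :_{\mathcal{R}^{\alpha}} r^{p^{e}})$. Chaining the two steps delivers $Uu \in W^{[p^{e}]}$, so $UW \subseteq W^{[p^{e}]}$ and $P$ is $U$-special. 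Since regularity is the only ambient hypothesis used, the same argument handles all three cases $\mathcal{R} \in \{R, R_{\mathfrak{p}}, S\}$ uniformly.
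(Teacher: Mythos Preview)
Your proposal is correct and follows essentially the same route as the paper. The only cosmetic differences are that the paper treats $(1)$ before $(2)$ and asserts the identity $(V^{[p^{e}]}:_{\mathcal{R}^{\alpha}} r^{p^{e}})=(V:_{\mathcal{R}^{\alpha}} r)^{[p^{e}]}$ without comment, whereas you supply the standard justification via exactness of $F_{\mathcal{R}}^{e}$; your intermediate passage through $(V^{[p^{e}]}:_{\mathcal{R}^{\alpha}} r)$ and the paper's direct computation $r^{p^{e}}UW\subseteq r^{p^{e}-1}V^{[p^{e}]}\subseteq V^{[p^{e}]}$ are rearrangements of the same chain of inclusions.
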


\begin{proof}
	For \textit{1.} let $P$ be an associated prime of $J$ and $J=\ann_{\mathcal{R}}\mathcal{R}^{\alpha}/V$ for some $V\subseteq \mathcal{R}^{\alpha}$ such that $UV\subseteq V^{[p^{e}]}$. Then for a suitable element $r\in \mathcal{R}$ we have $P=(J: r)$. If $W=(V:_{\mathcal{R}^{\alpha}} r)=\{ w\in \mathcal{R}^{\alpha}\mid rw\in V \}$ then $P=\ann_{\mathcal{R}}\mathcal{R}^{\alpha}/W$ since $s \in P \Leftrightarrow rs \in J \Leftrightarrow rs\mathcal{R}^{\alpha} \subseteq V \Leftrightarrow s\mathcal{R}^{\alpha} \subseteq W$. On the other hand, since $UV \subseteq V^{[p^{e}]}$ and $rW \subseteq V$ we have $rUW \subseteq UV$ and so $r^{p^{e}}UW\subseteq r^{p^{e}-1}UV\subseteq r^{p^{e}-1}V^{[p^{e}]} \subseteq V^{[p^{e}]}$. This means that $UW \subseteq (V^{[p^{e}]}:_{\mathcal{R}^{\alpha}} r^{p^{e}})=(V:_{\mathcal{R}^{\alpha}} r)^{[p^{e}]}=W^{[p^{e}]}$.
	
	For \textit{2.} let $J=\ann_{\mathcal{R}}\mathcal{R}^{\alpha}/V$ for some $V\subseteq \mathcal{R}^{\alpha}$ such that $UV\subseteq V^{[p^{e}]}$. It is clear that $J\mathcal{R}^{\alpha}\subseteq (J\mathcal{R}^{\alpha})^{\star^{e} U}$ and $J\mathcal{R}^{\alpha} \subseteq V \Rightarrow (J\mathcal{R}^{\alpha})^{\star^{e} U}\subseteq V^{\star^{e} U}=V$. Therefore, $J \subseteq \ann_{\mathcal{R}}\mathcal{R}^{\alpha}/(J\mathcal{R}^{\alpha})^{\star^{e} U} \subseteq \ann_{\mathcal{R}}\mathcal{R}^{\alpha}/V=J$, and so $J=\ann_{\mathcal{R}}\mathcal{R}/(J\mathcal{R}^{\alpha})^{\star^{e} U}$.
\end{proof}

\begin{theorem}\label{T1}\cite[Theorem 5.1]{K3} There are only finitely many $\Theta$-special prime ideals $P$ of $S$ with the property that for some $S[\Theta;f]$-submodule $M \subseteq E^{\alpha}$ with $\ann_{S}M=P$ and the restriction of $\Theta$ to $M$ is not zero.	
\end{theorem}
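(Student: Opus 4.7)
The plan is to reduce to Sharp's finiteness theorem (\cite[Corollary 3.11]{RY}) for special ideals of an Artinian module with injective Frobenius action, after stripping off the nil part of $\Theta$ via a Hartshorne--Speiser--Lyubeznik--Sharp stabilization. Throughout I read the hypothesis in the strong non-degenerate sense of the introduction, namely $\Theta^n|_M\neq 0$ for every $n\geq 1$; equivalently, $M\not\subseteq\ker\Theta^n$ for any $n$.

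First I would analyze the ascending chain $\ker\Theta\subseteq\ker\Theta^2\subseteq\cdots$ of $S[\Theta;f]$-submodules of $E^\alpha$. By Lemma \ref{nil1}(1), $\ker\Theta^n=E(W_n)$ where $W_n=I_n(U^{[p^{n-1}]}U^{[p^{n-2}]}\cdots U\cdot S^\alpha)$, so under the inclusion-reversing correspondence $V\mapsto E(V)$ of Remark \ref{rm1} this translates into a descending chain $W_1\supseteq W_2\supseteq\cdots$ in $S^\alpha$. HSLS stabilization for Frobenius actions on the Artinian module $E^\alpha$ then provides $n_0$ with $N:=\bigcup_n\ker\Theta^n=\ker\Theta^{n_0}$, and the induced $\overline{\Theta}$ acts injectively on the Artinian quotient $\overline{E}:=E^\alpha/N$.

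For each $M$ meeting the hypothesis, the image $\overline{M}:=(M+N)/N$ is nonzero (because $M\not\subseteq N$) and $\overline{\Theta}$-invariant, so $\ann_S\overline{M}$ is an $\overline{\Theta}$-special ideal of the injective-Frobenius module $\overline{E}$. Sharp's theorem gives only finitely many such special ideals, with only finitely many special primes among them.

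The principal obstacle is recovering $P=\ann_S M$ from $\overline{M}$, since a priori only the containment $P\subseteq\ann_S\overline{M}$ is automatic. To bridge this gap I would, for each candidate prime $P$, replace $M$ by the largest $S[\Theta;f]$-submodule $M_P\subseteq E^\alpha$ with annihilator $P$ (the sum of all such submodules preserves the annihilator and is canonical in $P$), and then show that the assignment $P\mapsto\ann_S\overline{M_P}$ has finite fibers: any two primes lying in the same fiber must appear among the associated primes of the star-closure $(PS^\alpha)^{\star^1 U}$ and of its primary components, a set which is finite by Lemma \ref{sp1}(2) together with the Noetherianity of $S$. Composing finiteness on the quotient side with finiteness of fibers yields the claim.
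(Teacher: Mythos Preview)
Your overall strategy—pass to the quotient $\overline{E}=E^\alpha/N$ where the induced Frobenius is injective, then invoke Sharp's finiteness theorem—is a natural idea, and steps 1--3 are fine (the HSLS stabilization is exactly Lemma~\ref{nil1}(1) combined with Theorem~\ref{2} on the Matlis-dual side). But the argument breaks down at step~4, the ``finite fibers'' claim. You assert that if $P$ and $P'$ land in the same fiber then $P'$ must be an associated prime of $(PS^\alpha)^{\star U}$, citing Lemma~\ref{sp1}(2); however, that lemma only says $(PS^\alpha)^{\star U}$ is the smallest submodule with annihilator $P$ and $UV\subseteq V^{[p]}$---it gives no control whatsoever over \emph{other} primes $P'$ with $\ann_S\overline{M_{P'}}=\ann_S\overline{M_P}$. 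Concretely, $\ann_S\overline{M_P}=\{s:s^{p^{n_0}}\!\in\ann_S\Theta^{n_0}(M_P)\}$, and the $\Theta$-submodule generated by $\Theta^{n_0}(M_P)$ can have annihilator strictly larger than $P$, so there is no a~priori reason the map $P\mapsto\ann_S\overline{M_P}$ is injective or even has finite fibers. This is the missing idea, and without it the reduction to Sharp's theorem does not conclude.

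For comparison, the paper does not reprove Theorem~\ref{T1} but cites \cite{K3}, describing the method as induction on $\alpha$; the same inductive scheme is carried out in detail for the polynomial analogue (Theorem~\ref{1}) via Proposition~\ref{last3} and Theorem~\ref{last4}. That approach never passes to a quotient with injective Frobenius; instead it shows directly that for each $U$-special prime $P$ there are only finitely many $U$-special primes minimally containing $P$, by row/column operations on $U$ that reduce $\alpha$ (Lemma~\ref{last2}) together with Lemma~\ref{last1}. Finiteness then follows because chains of special primes have length at most $\dim S$. If you want to salvage your route, the honest task is to prove that $\ann_S\overline{M_P}=P$ (equivalently, that the submodule generated by $\Theta^{n_0}(M_P)$ still has annihilator exactly $P$); this is not obvious and would itself require an argument of roughly the same depth as the inductive one.
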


Theorem \ref{T1} was proved by induction on $\alpha$ using the aid of injective hull of the residue field of $S$, and turned into an algorithm in \cite{K3}, which we call it here Katzman-Zhang Algorithm. Since injective hulls of residue fields are not available for polynomial rings, we only use techniques of $\Ie_{e}(-)$ operation and $\star$-closure to generalize the Katzman-Zhang Algorithm to $R$. Next theorem allows us to prove polynomial version of Theorem \ref{T1}.

\begin{theorem} \label{2} \cite[Theorem 3.2]{K4}
	Let $U$ be an $\alpha \times \alpha$ matrix with entries in $R$ and $\alpha\in\N$.
	\begin{enumerate}
		\item If $\Ie_{e}(U^{[p^{e-1}]}U^{[p^{e-2}]} \cdots UR^{\alpha})=\Ie_{e+1}(U^{[p^{e}]}U^{[p^{e-1}]} \cdots UR^{\alpha})$ then
		\[
		\Ie_{e}(U^{[p^{e-1}]}U^{[p^{e-2}]} \cdots UR^{\alpha})= \Ie_{e+j}(U^{[p^{e+j-1}]}U^{[p^{e+j-2}]} \cdots UR^{\alpha}) 
		\] 
		for all $j \geq 0$.
		\item There exists an integer $e$ such that (1) holds.
	\end{enumerate}
\end{theorem}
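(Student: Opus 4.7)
Write $a_e := I_e(U^{[p^{e-1}]} \cdots U R^{\alpha})$ and $A_e := U^{[p^{e-1}]} \cdots U$; the theorem reads (1) $a_e = a_{e+1}$ implies $a_{e+j} = a_e$ for all $j \geq 0$, and (2) such an $e$ exists. The plan is to distill everything into the single first-order recursion $a_{e+1} = I_1(U a_e)$, deduce (1) by immediate induction, and handle (2) by a local Matlis-duality argument.

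The technical core is the recursion. First I would verify the composition identity $I_{e+1}(W) = I_1(I_e(W))$, immediate from defining minimality combined with $L^{[p^{e+1}]} = (L^{[p]})^{[p^e]}$. Next I would prove the commutation
\[
I_e(U^{[p^e]} K) = U\, I_e(K)
\]
for any submodule $K \subseteq R^{\alpha}$. The inclusion $\subseteq$ is immediate from $K \subseteq I_e(K)^{[p^e]}$ together with the identity $U^{[p^e]} N^{[p^e]} = (UN)^{[p^e]}$ (a consequence of the Frobenius being a ring homomorphism). For $\supseteq$, fix a free basis $\mathcal{B}$ of $R$ over $R^{p^e}$, expand $v \in K$ uniquely as $v = \sum_{b \in \mathcal{B}} u_b^{[p^e]} b$, and observe that $U^{[p^e]} v = \sum_{b} (U u_b)^{[p^e]} b$ is the corresponding expansion of $U^{[p^e]} v$. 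Hence $U^{[p^e]} K \subseteq L^{[p^e]}$ forces $U u_b \in L$ for every $b$, so $u_b \in L' := \{w \in R^{\alpha} : Uw \in L\}$, giving $K \subseteq L'^{[p^e]}$; by minimality $I_e(K) \subseteq L'$ and hence $U I_e(K) \subseteq L$. Applied with $L = I_e(U^{[p^e]} K)$ this finishes the equality. Combining both ingredients with $A_{e+1} R^{\alpha} = U^{[p^e]} A_e R^{\alpha}$ produces the recursion $a_{e+1} = I_1(I_e(U^{[p^e]} A_e R^{\alpha})) = I_1(U a_e)$.

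With the recursion in hand, Part (1) is immediate by induction on $j$: assuming $a_{e+j} = a_e$, one has $a_{e+j+1} = I_1(U a_{e+j}) = I_1(U a_e) = a_{e+1} = a_e$. For Part (2), the operator $T(K) := I_1(UK)$ is order-preserving (both $K \mapsto UK$ and $I_1$ are), so $\{a_e\}$ is a descending chain in $R^{\alpha}$ starting from $a_0 = R^{\alpha}$. The main obstacle is that $R^{\alpha}$ is Noetherian but not Artinian, so such a chain need not stabilize a priori. My plan is to reduce to the complete local case: if $a_e \neq a_{e+1}$, pick an associated prime $\mathfrak{p}$ of the finitely generated quotient $a_e/a_{e+1}$ so that the strict inclusion is detected locally, then pass to $S = \widehat{R_{\mathfrak{p}}}$ (legitimate by Lemmas \ref{ie} and \ref{sp2}, which say $I_e(-)$ and $\star$-closure commute with completion) and use Matlis duality via the $\Delta^e$ and $\Psi^e$ functors to translate the descending chain $\{(a_e)_{\mathfrak{p}}\}$ in $S^{\alpha}$ into an ascending chain of $S[\Theta;f]$-submodules of the Artinian module $E_S(S/\mathfrak{m})^{\alpha}$, which must stabilize. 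The delicate step I expect to be the hardest is upgrading this pointwise stabilization to a single global $e$; my approach would be either to extract a uniform bound from the finitely many associated primes of $R^{\alpha}/\bigcap_e a_e$, or to view the construction as defining an $F$-finite $F$-module in the sense of Lyubeznik and invoke its Noetherianity to force the chain to terminate.
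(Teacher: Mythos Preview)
The paper does not prove this theorem; it is quoted from \cite[Theorem~3.2]{K4} without argument. So there is no ``paper's proof'' to compare against, and I will assess your proposal on its own merits.

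Your recursion $a_{e+1}=I_1(Ua_e)$ is correct, and both ingredients---the composition identity $I_{e+1}=I_1\circ I_e$ and the commutation $I_e(U^{[p^e]}K)=U\,I_e(K)$---are proved cleanly. Part~(1) then follows by the one-line induction you give.

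For Part~(2) there is a genuine gap. The descending-chain observation and the local stabilization at each completed localization (via Lemma~\ref{nil1} and the Hartshorne--Speiser--Lyubeznik bound on Frobenius nilpotents) are fine. The problem is the globalization, and neither of your two proposed fixes works as stated. Your option~(1) fails because associated primes of the subquotients $a_e/a_{e+1}$ need not lie in $\ass(R^\alpha/\bigcap_e a_e)$; they only lie in its support, which is infinite. Your option~(2) is too vague to be a proof: the chain $\{a_e\}$ is \emph{descending}, and Noetherianity of the category of $F$-finite $F$-modules gives you ACC, not DCC, so you would have to say precisely which ascending chain of $F$-submodules you intend to produce.

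That said, your local argument can be upgraded to a global one with one extra observation. For each $e$ set $U_e=\{\mathfrak p\in\spec R:(a_e)_{\mathfrak p}=(a_{e+1})_{\mathfrak p}\}$, the open complement of $\supp(a_e/a_{e+1})$. Since the recursion localizes (Lemma~\ref{ie}), once $(a_e)_{\mathfrak p}=(a_{e+1})_{\mathfrak p}$ the same holds for all larger indices, so $U_e\subseteq U_{e+1}$; your local stabilization gives $\bigcup_e U_e=\spec R$. As $\spec R$ is a Noetherian topological space, this ascending chain of open sets stabilizes, say at $U_{e_0}=\spec R$, and then $a_{e_0}=a_{e_0+1}$ globally. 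Alternatively, one can bypass Matlis duality altogether by recognising $K\mapsto I_1(UK)$ as the image map of a $p^{-1}$-linear endomorphism of $R^\alpha$ (cf.\ Corollary~\ref{fac}) and invoking the image-stabilization theorem for coherent Cartier modules.
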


For the rest of this section, we will fix an $\alpha\times\alpha$ matrix $U$ with entries in $R$, and $\mathcal{K}$ will denote the stable value of $\{\Ie_{e}(U^{[p^{e-1}]}U^{[p^{e-2}]} \cdots UR^{\alpha})\}_{e\geq1}$ as in Theorem \ref{2}.

\begin{proposition} \label{rmklast}
	If $P$ is a prime ideal of $R$ with the property that $\mathcal{K} \subseteq PR^{\alpha}$ where $\mathcal{K}=\Ie_{e}(U_{e}R^{\alpha})$ and $U_{e}=U^{[p^{e-1}]}U^{[p^{e-2}]}\cdots U$, then $P$ is $U_{e}$-special.	
\end{proposition}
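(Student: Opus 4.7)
The plan is to exhibit an explicit submodule $V \subseteq R^\alpha$ that witnesses the $U_e$-speciality of $P$, namely $V = PR^\alpha$.

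First I would check the annihilator condition. Since $P$ is prime, $R/P$ is a domain, and the quotient $R^\alpha / PR^\alpha$ is canonically isomorphic to $(R/P)^\alpha$, which is a free module over $R/P$. Consequently
\[
\ann_{R}\bigl(R^\alpha / PR^\alpha\bigr) = \ann_{R}(R/P)^\alpha = P,
\]
so $V = PR^\alpha$ realizes $P$ as the annihilator of $R^\alpha / V$.

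Next I would verify the star-closure condition $U_e V \subseteq V^{[p^e]}$. By the very definition of $\mathcal{K} = \Ie_{e}(U_e R^\alpha)$ (the minimal submodule $L$ of $R^\alpha$ with $U_e R^\alpha \subseteq L^{[p^e]}$), we have $U_e R^\alpha \subseteq \mathcal{K}^{[p^e]}$. Combining this with the hypothesis $\mathcal{K} \subseteq PR^\alpha = V$, which yields $\mathcal{K}^{[p^e]} \subseteq V^{[p^e]}$, gives the chain
\[
U_e V \;\subseteq\; U_e R^\alpha \;\subseteq\; \mathcal{K}^{[p^e]} \;\subseteq\; V^{[p^e]}.
\]
Therefore $V$ satisfies $U_e V \subseteq V^{[p^e]}$ and has annihilator of quotient equal to $P$, which is precisely the definition of a $U_e$-special ideal.

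There is essentially no obstacle here: the result is a direct consequence of unravelling the definitions of $\mathcal{K}$ and of $U$-speciality, using only the primeness of $P$ to identify the annihilator and the containment $\mathcal{K} \subseteq PR^\alpha$ to bootstrap the inclusion $U_e R^\alpha \subseteq \mathcal{K}^{[p^e]}$ into $U_e R^\alpha \subseteq (PR^\alpha)^{[p^e]}$.
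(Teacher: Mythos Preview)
Your proof is correct and follows essentially the same approach as the paper: both take $V = PR^{\alpha}$ as the witnessing submodule, use $\mathcal{K} \subseteq PR^{\alpha}$ to deduce $U_e R^{\alpha} \subseteq (PR^{\alpha})^{[p^e]}$, and conclude $U_e\,PR^{\alpha} \subseteq (PR^{\alpha})^{[p^e]}$. The only cosmetic difference is that the paper phrases the conclusion as $PR^{\alpha} = (PR^{\alpha})^{\star U_e}$ rather than verifying the definition directly, and leaves the annihilator identity $\ann_R(R^{\alpha}/PR^{\alpha}) = P$ implicit, whereas you spell it out.
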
 

\begin{proof} 
	Let $P$ be a prime ideal of $R$ such that $\mathcal{K} \subseteq PR^{\alpha}$. Then
	\[
	\mathcal{K} \subseteq PR^{\alpha}\Rightarrow U_{e}R^{\alpha}\subseteq P^{[p^{e}]}R^{\alpha}\Rightarrow U_{e}PR^{\alpha}\subseteq P^{[p^{e}]}R^{\alpha}\Rightarrow PR^{\alpha}=(PR^{\alpha})^{\star U_{e}}.
	\]
	Therefore, $P$ is $U_{e}$-special.
\end{proof}

By Proposition \ref{rmklast}, any prime ideal containing $\mathcal{K}$ is $U_{e}$-special. This is equivalent to saying that the action of $U_{e}$ on submodules $PR^{\alpha}$ containing $\mathcal{K}$ with $P$ being a prime is the same as the action of zero matrix. Henceforth, we will assume that $\mathcal{K} \neq 0$.

Our next theorem is the generalization of Theorem \ref{2} to $R$, and we will prove it using a very similar method to that in \cite[Section 5]{K3}. 

\begin{theorem} \label{1}
	The set of all $U$-special prime ideals $P$ of $R$ with the property that $\mathcal{K} \nsubseteq PR^{\alpha}$ is finite. 
\end{theorem}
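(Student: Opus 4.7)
The plan is to prove Theorem \ref{1} by induction on $\alpha$, mirroring the proof of Theorem \ref{T1} (i.e.\ \cite[Theorem 5.1]{K3}) but replacing its injective-hull machinery, which is not available over polynomial rings, with direct manipulations of submodules of $R^{\alpha}$ via the $\Ie_{e}(-)$ operation and $\star$-closure.

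For the base case $\alpha=1$, the matrix $U=(u)$ is a single element, so a $U$-special prime is exactly a $\phi$-compatible prime ideal for the map $\phi(-)=\pi_{e}(F_{*}^{e}u\cdot-)$. Since $U_{e}R=u^{1+p+\cdots+p^{e-1}}R\subseteq uR$, monotonicity of $\Ie_{e}(-)$ gives $\mathcal{K}=\Ie_{e}(U_{e}R)\subseteq\Ie_{e}(uR)$, so the condition $\mathcal{K}\nsubseteq P$ forces $\Ie_{e}(uR)\nsubseteq P$. The Katzman-Schwede algorithm of Section \ref{section:Katzman-Schwede Algorithm} terminates and outputs a finite list of $\phi$-compatible primes not containing $\Ie_{e}(uR)$, and this finite list contains our set.

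For the inductive step, I would assume the theorem for all matrices of size strictly less than $\alpha$, and let $P$ be a $U$-special prime with $\mathcal{K}\nsubseteq PR^{\alpha}$. Set $V:=(PR^{\alpha})^{\star^{e}U}$, which by Lemma \ref{sp1}(2) is the minimal $\star$-closed submodule of $R^{\alpha}$ with $\ann_{R}R^{\alpha}/V=P$. Following \cite[Section 5]{K3}, I would split into two cases depending on whether there exists a proper intermediate $U$-stable submodule $V\subsetneq W\subsetneq R^{\alpha}$ with $UW\subseteq W^{[p^{e}]}$. If such a $W$ exists, the induced $U$-action on the quotient $R^{\alpha}/W$ is described by a strictly smaller matrix $U'$ of size $\beta<\alpha$; after an appropriate change of coordinates, $P$ becomes $U'$-special with the non-degeneracy condition transferred, and the inductive hypothesis applies. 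If no such $W$ exists, the rigidity of $R^{\alpha}/V$ as a faithful $R/P$-module constrains the entries of $U$ modulo $P$ to lie in specific ideals such as $(P^{[p^{e}]}:P)$, pinning $P$ down to finitely many options via the Katzman-Schwede algorithm applied to scalar invariants of $U$.

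The main obstacle will be producing the proper $U$-stable refinement $W$ in the polynomial setting. In \cite[Section 5]{K3}, the injective hull $E^{\alpha}$ provides the ambient module in which such refinements are found by identifying a nontrivial proper $\Theta$-submodule of $M=\ann_{E^{\alpha}}V^{t}$ and translating the construction back via Proposition \ref{art1}. I would circumvent the absence of an injective hull by localizing at $P$ and completing---Lemma \ref{sp2} ensures $\star$-closure commutes with these operations, so the construction of \cite{K3} can be executed inside $\widehat{R_{P}}$---and then descending the resulting refinement back to $R$ by Noetherianity. Ensuring the non-degeneracy $\mathcal{K}\nsubseteq PR^{\alpha}$ survives all the reductions, especially the descent step, is the delicate point of the argument.
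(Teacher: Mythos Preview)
Your overall plan---induction on $\alpha$, with the base case handled by the Katzman--Schwede algorithm---matches the paper. The gap is in the organization of the inductive step. You fix a $U$-special prime $P$, produce auxiliary data (an intermediate $W$, a change-of-coordinates matrix, a smaller matrix $U'$), and then invoke the inductive hypothesis on $U'$. But $W$, and hence $U'$, depend on $P$: the submodule $V=(PR^{\alpha})^{\star U}$ already depends on $P$, and so does any refinement $W\supsetneq V$ you construct from it. The inductive hypothesis then tells you only that $P$ lies in a finite set $S_{U'}$ indexed by a matrix that itself varies with $P$. As $P$ ranges over all $U$-special primes you get a priori infinitely many matrices $U'$, and the union of the corresponding finite sets need not be finite. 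Nothing in your outline pins down a single finite collection of auxiliary matrices independent of $P$.

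The paper avoids this by a different bookkeeping: rather than trying to capture each $P$ directly, it proves (Proposition \ref{last3} and Theorem \ref{last4}) that for a fixed $U$-special prime $P$ there are only finitely many $U$-special primes $Q$ \emph{minimally containing} $P$, and crucially the auxiliary matrices and elements produced in that argument depend only on $P$, not on $Q$. Starting from the zero ideal and iterating, the finite Krull dimension of $R$ bounds the depth of the resulting tree, giving finiteness. Your localize--complete--descend idea is also not how the paper proceeds: descending a submodule of $\widehat{R_{P}}^{\,\alpha}$ back to $R^{\alpha}$ is not automatic (and indeed the paper notes after Theorem \ref{3} that primes of $R$ need not stay prime in $S=\widehat{R_{P}}$). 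Instead the paper stays over $R$ throughout, using only localization at single elements $a\notin P$ together with Lemma \ref{last2} to perform the needed changes of coordinates.
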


We will prove Theorem \ref{1} by induction on $\alpha$. Assume that $\alpha=1$. For a prime ideal $P$ being a $u$-special prime, i.e. $P=\ann_{R}R/P^{\star u}$, is equivalent to the property that $uP \subseteq P^{[p]}$. This means, by Corollary \ref{fac}, that $P$ is a $\phi$-compatible ideal where $\phi(-)=\pi(F_{*}u-)$. Then the set of all $u$-special prime ideals are finite and the Katzman-Schwede algorithm finds such primes. Henceforth in this section, we will assume that Theorem \ref{1} holds for $\alpha-1$.

For a $U$-special prime ideal $P$, we will present an effective method for finding all $U$-special prime ideals $Q\varsupsetneq P$ for which there is no $U$-special prime ideal strictly between $P$ and $Q$, and we will call such $U$-special prime ideals $Q$ as minimally containing $P$. The following lemma is a generalization of Lemma 5.2 in \cite{K3} to $R$, which is our starting point of finding $U$-special prime ideals minimally containing $P$.

\begin{lemma} \label{last1}
	Let $P\subsetneq Q$ be $U$-special prime ideals of $R$ such that $Q$ contains $P$ minimally. If $a\in Q\setminus P$, then $Q$ is among the minimal prime ideals of $\ann_{R}R^{\alpha}/W$ where $W=((P+aR)R^{\alpha}) ^{\star U}$.
\end{lemma}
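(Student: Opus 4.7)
The plan is to identify $W := ((P+aR)R^{\alpha})^{\star^{e}U}$ and $J := \ann_{R}(R^{\alpha}/W)$, then locate $Q$ as a minimal prime of $J$ by squeezing it into the chain $P \subseteq J \subseteq Q$ and using the minimality hypothesis on $Q$.

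First, I would verify the upper inclusion $J \subseteq Q$. Because $Q$ is $U$-special, Lemma \ref{sp1}(2) says $V_{Q} := (QR^{\alpha})^{\star^{e}U}$ satisfies $UV_{Q} \subseteq V_{Q}^{[p^{e}]}$ and $Q = \ann_{R}(R^{\alpha}/V_{Q})$. Since $a \in Q$ and $P \subseteq Q$, we have $(P+aR)R^{\alpha} \subseteq QR^{\alpha} \subseteq V_{Q}$. By the defining minimality of $\star$-closure, $W \subseteq V_{Q}$, giving a surjection $R^{\alpha}/W \twoheadrightarrow R^{\alpha}/V_{Q}$ and hence $J = \ann_{R}(R^{\alpha}/W) \subseteq \ann_{R}(R^{\alpha}/V_{Q}) = Q$. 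The lower inclusion $P + aR \subseteq J$ is immediate from $(P+aR)R^{\alpha} \subseteq W$.

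Next I would exploit that $J$ is itself $U$-special: indeed, $W$ is a $\star^{e}U$-closure so $UW \subseteq W^{[p^{e}]}$, and $J = \ann_{R}(R^{\alpha}/W)$ fits Definition \ref{uspecial}. By Lemma \ref{sp1}(1), every associated prime of $J$ is $U$-special; in particular the minimal primes of $J$ are $U$-special. Since $J \subseteq Q$, there exists a minimal prime $Q'$ of $J$ with $Q' \subseteq Q$. From $P \subseteq J \subseteq Q'$ we get $P \subseteq Q'$, and from $a \in J \subseteq Q'$ combined with $a \notin P$ we get $Q' \supsetneq P$. Thus $Q'$ is a $U$-special prime strictly between $P$ and $Q$ (inclusive of $Q$), so the minimality assumption on $Q$ forces $Q' = Q$. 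This exhibits $Q$ as a minimal prime of $J = \ann_{R}(R^{\alpha}/W)$.

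I expect no serious obstacle: the only delicate point is keeping track of the direction of the inclusion $W \subseteq V_{Q}$ versus $J \subseteq Q$ (these reverse under taking annihilators), and confirming that the minimal-prime-below-$Q'$ argument uses the $U$-speciality of $J$ rather than needing $J$ to be radical or primary. Both of these are handled cleanly by Lemma \ref{sp1}.
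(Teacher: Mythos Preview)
Your proof is correct and follows essentially the same approach as the paper: sandwich $J=\ann_R R^\alpha/W$ between $P$ and $Q$ via the monotonicity of $\star$-closure and Lemma~\ref{sp1}(2), pick a minimal prime $Q'$ of $J$ below $Q$, note it is $U$-special by Lemma~\ref{sp1}(1), and use the minimality hypothesis to force $Q'=Q$. Your argument is in fact slightly more explicit than the paper's in verifying $Q'\supsetneq P$ via $a\in J\subseteq Q'$ with $a\notin P$.
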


\begin{proof}
	Since $PR^{\alpha}\subseteq (P+aR)R^{\alpha} \subseteq QR^{\alpha}$ we have
	\[
	(PR^{\alpha})^{\star U}\subseteq ((P+aR)R^{\alpha})^{\star U} \subseteq (QR^{\alpha})^{\star U}.
	\]
	Then by Lemma \ref{sp1},
	\[
	P=\ann_{R}\frac{R^{\alpha}}{(PR^{\alpha})^{\star U}} \subseteq \ann_{R}\frac{R^{\alpha}}{W} \subseteq \ann_{R}\frac{R^{\alpha}}{(QR^{\alpha})^{\star U}}=Q
	\]
	which implies that $Q$ contains a minimal prime ideal of $\ann_{R}R^{\alpha}/W$. Therefore, by Lemma \ref{sp1} again, this minimal prime is $U$-special. Since $Q$ contains $P$ minimally, it has to be $Q$ itself.  
\end{proof}

Next, we will prove a generalization of Lemma 5.3 in \cite{K3} to $R$, which is a crucial step for proving Theorem \ref{1}.

\begin{lemma} \label{last2}
	Let $Q$ be a $U$-special prime ideal of $R$, where $Q=\ann_{R}R^{\alpha}/W$ for some submodule $W\subseteq R^{\alpha}$ satisfying $UW\subseteq W^{[p]}$. Let $a\notin Q$ and $X$ be an invertible $\alpha\times\alpha$ matrix with entries in the localization $R_{a}$. Let $\nu \gg 0$ be such that $U_{1}=a^{\nu}X^{[p]}UX^{-1}$ has entries in $R$ and $W_{1}=XW_{a}\cap R^{\alpha}$. Then
	\begin{enumerate}
		\item $Q$ is a minimal prime of $\ann_{R}R^{\alpha}/W_{1}$ and $U_{1}W_{1}\subseteq W_{1}^{[p]}$, i.e. $Q$ is $U_{1}$-special.
		\item If $\Ie_{e}(U^{[p^{e-1}]}U^{[p^{e-2}]} \cdots UR^{\alpha})\nsubseteq W$, then $$\Ie_{e}(U_{1}^{[p^{e-1}]}U_{1}^{[p^{e-2}]} \cdots U_{1}R^{\alpha})\nsubseteq W_{1}.$$
	\end{enumerate}
\end{lemma}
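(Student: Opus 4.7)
The plan is to treat parts~(1) and~(2) separately, passing to the localization $R_a$ where $X$ has entries in the ring.

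For part~(1), to verify $U_1W_1\subseteq W_1^{[p]}$, I take any $w\in W_1\subseteq XW_a$ so that $X^{-1}w\in W_a$; localizing the hypothesis $UW\subseteq W^{[p]}$ then gives
\[
X^{[p]}UX^{-1}w \in X^{[p]}W_a^{[p]} = (XW_a)^{[p]} = (W_1^{[p]})_a.
\]
Finite generation of $W_1^{[p]}$ over the Noetherian ring $R$ allows a single large $\nu$ to clear the denominators appearing for every generator, so $U_1 w = a^\nu X^{[p]}UX^{-1}w$ lies in $W_1^{[p]}\subseteq R^\alpha$. For the minimality claim, multiplication by $X$ gives an $R_a$-linear isomorphism $R_a^\alpha/(W_1)_a \cong R_a^\alpha/W_a$, and since annihilators of finitely generated modules commute with localization, this forces $\ann_R(R^\alpha/W_1)\,R_a = QR_a$. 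Any prime $P\subseteq Q$ of $R$ containing $\ann_R(R^\alpha/W_1)$ avoids $a$, localizes to $PR_a\supseteq QR_a$, and therefore equals $Q$.

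For part~(2), a straightforward induction on $e$ (with the interior $X^{[p^i]}$-factors telescoping) yields
\[
U_1^{[p^{e-1}]}\cdots U_1 \;=\; a^{\nu(1+p+\cdots+p^{e-1})}\,X^{[p^e]}\,U^{[p^{e-1}]}\cdots U\,X^{-1}.
\]
Combining the identity $\Ie_e(X^{[p^e]}K)=X\,\Ie_e(K)$ (immediate from the universal property of $\Ie_e$ together with the invertibility of $X$) with Lemma~\ref{ie}, I obtain $(\Ie_e(U_1^{[p^{e-1}]}\cdots U_1 R^\alpha))_a = X\,(\Ie_e(U^{[p^{e-1}]}\cdots UR^\alpha))_a$, while $(W_1)_a = XW_a$. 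Because $W_1 = XW_a\cap R^\alpha$ is by construction $a$-saturated in $R^\alpha$, the containment $\Ie_e(U_1^{[p^{e-1}]}\cdots U_1 R^\alpha)\subseteq W_1$ is equivalent to its localized version, which after cancelling $X$ becomes $(\Ie_e(U^{[p^{e-1}]}\cdots UR^\alpha))_a\subseteq W_a$. Taking contrapositives, the conclusion of~(2) reduces to $(\Ie_e(U^{[p^{e-1}]}\cdots UR^\alpha))_a\nsubseteq W_a$.

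The step I expect to be the main obstacle is upgrading the hypothesis $\Ie_e(U^{[p^{e-1}]}\cdots UR^\alpha)\nsubseteq W$ in $R$ to this localized non-containment. The primality of $Q=\ann_R(R^\alpha/W)$ together with $a\notin Q$ is what makes the upgrade possible: since $Q$ is a minimal associated prime of the finitely generated $R$-module $R^\alpha/W$, the element $a$ acts as a non-zero-divisor modulo $Q$, and so on any cyclic submodule of $R^\alpha/W$ whose annihilator is exactly $Q$. A witness $v\in \Ie_e(U^{[p^{e-1}]}\cdots UR^\alpha)\setminus W$ whose image in $R^\alpha/W$ generates such a cyclic submodule — produced from the associated-prime decomposition of the nonzero submodule $(\Ie_e(U^{[p^{e-1}]}\cdots UR^\alpha)+W)/W$ — then satisfies $a^k v\notin W$ for all $k\geq 0$, which is precisely the localized non-containment required to complete~(2).
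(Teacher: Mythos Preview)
Your argument tracks the paper's proof closely. For part~(1) the paper writes the containment as a single chain
\[
U_1W_1\subseteq (a^{\nu} X^{[p]}UW_a)\cap R^{\alpha}\subseteq X^{[p]}W_a^{[p]}\cap R^{\alpha}=(XW_a\cap R^{\alpha})^{[p]}=W_1^{[p]},
\]
invoking the $a$-saturation of $W_1$ (hence of $W_1^{[p]}$, by exactness of Frobenius) for the penultimate equality rather than enlarging~$\nu$ as you do; the minimality of $Q$ is handled by the same localization $J_a=Q_a$. For part~(2) both you and the paper telescope $U_1^{[p^{e-1}]}\cdots U_1$, localize at $a$, cancel the $X$-factors, and then must descend back to $R$. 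The paper works at the level of the products $U^{[p^{e-1}]}\cdots UR^{\alpha}$ and $W^{[p^e]}$ rather than at the level of $\Ie_e$ and $W$, but this is cosmetic.

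Where your proposal has a gap is the final descent. You want a witness $v\in\Ie_e(U^{[p^{e-1}]}\cdots UR^{\alpha})\setminus W$ whose image in $R^{\alpha}/W$ has annihilator exactly $Q$, and you claim the associated-prime decomposition of $N:=(\Ie_e(\cdots)+W)/W$ produces one. But $\ass(N)\subseteq\ass(R^{\alpha}/W)$ need not contain $Q$: although $Q$ is the unique \emph{minimal} associated prime of $R^{\alpha}/W$, the submodule $N$ may live entirely over embedded primes, each of which could contain $a$. (Take $R=\Bbbk[x,y]$, $\alpha=2$, $W=\langle(x,0),(y,0),(0,x)\rangle$, $Q=(x)$, $a=y$: then $(1,0)+W$ is nonzero with annihilator $(x,y)\ni a$.) So your diagnosis of this step as the main obstacle is exactly right, but your associated-prime argument does not close it. The paper, for its part, dispatches the descent with the bare assertion that $a$ is a non-zero-divisor on $R^{\alpha}/W^{[p^e]}$ --- by exactness of Frobenius this is equivalent to $a$ being a non-zero-divisor on $R^{\alpha}/W$, which is precisely the point you were trying to justify and which likewise does not follow from $a\notin Q=\ann_R(R^{\alpha}/W)$ alone.
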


\begin{proof}
	Let $J=\ann_{R}R^{\alpha}/W_{1}$. Then 
	\begin{align*}
		J_{a}
		&=(\ann_{R}R^{\alpha}/W_{1})_{a}=\ann_{R_{a}}R_{a}^{\alpha}/(W_{1})_{a}=\ann_{R_{a}}R_{a}^{\alpha}/XW_{a}\\
		&\cong \ann_{R_{a}}R_{a}^{\alpha}/W_{a}=(\ann_{R}R^{\alpha}/W)_{a}=Q_{a}.
	\end{align*}
	Therefore, $Q$ is a minimal prime ideal of $J$. We also have
	\begin{align*}
		U_{1}W_{1}&=a^{\nu}X^{[p]}UX^{-1}(XW_{a}\cap R^{\alpha})\subseteq(a^{\nu}X^{[p]}UX^{-1}XW_{a})\cap R^{\alpha}\\
		&\subseteq X^{[p]}W_{a}^{[p]}\cap R^{\alpha}=(XW_{a})^{[p]} \cap R^{\alpha}=(XW_{a}\cap R^{\alpha})^{[p]}=W_{1}^{[p]}.
	\end{align*}
	This means that $J$ is $U_{1}$-special. Therefore, by Lemma \ref{sp1}, $Q$ is $U_{1}$-special.
	
	Assume that
	\[
	\Ie_{e}(U^{[p^{e-1}]}U^{[p^{e-2}]} \cdots UR^{\alpha})\nsubseteq W \text{, i.e. } U^{[p^{e-1}]}U^{[p^{e-2}]} \cdots UR^{\alpha}\nsubseteq W^{[p^{e}]}.
	\]
	Now suppose the contrary that
	\[
	\Ie_{e}(U_{1}^{[p^{e-1}]}U_{1}^{[p^{e-2}]} \cdots U_{1}R^{\alpha})\subseteq W_{1}\text{, i.e. } U_{1}^{[p^{e-1}]}U_{1}^{[p^{e-2}]} \cdots U_{1}R^{\alpha}\subseteq W_{1}^{[p^{e}]}.
	\]
	Since
	\begin{align*}
		&U_{1}^{[p^{e-1}]}U_{1}^{[p^{e-2}]} \cdots U_{1}=(a^{\nu}X^{[p]}UX^{-1})^{[p^{e-1}]}(a^{\nu}X^{[p]}UX^{-1})^{[p^{e-2}]} \cdots a^{\nu}X^{[p]}UX^{-1}\\
		&=a^{\nu(p^{e-1})}X^{[p^{e}]}U^{[p^{e-1}]}(X^{-1})^{[p^{e-1}]}a^{\nu(p^{e-2})}X^{[p^{e-1}]}U^{[p^{e-2}]}(X^{-1})^{[p^{e-2}]}\cdots a^{\nu}X^{[p]}UX^{-1}\\
		&=a^{\nu(p^{e-1}+p^{e-2}+\cdots+1)}X^{[p^{e}]}U^{[p^{e-1}]}U^{[p^{e-2}]}\cdots UX^{-1},
	\end{align*}
	we have $bX^{[p^{e}]}U^{[p^{e-1}]}U^{[p^{e-2}]}\cdots UX^{-1}R^{\alpha}  \subseteq W_{1}^{[p^{e}]}=(XW_{a}\cap R^{\alpha})^{[p^{e}]}=X^{[p^{e}]}W_{a}^{^{[p^{e}]}}\cap R^{\alpha}$, where $b=a^{\nu(p^{e-1}+p^{e-2}+\cdots+1)}$. Therefore, $$X^{[p^{e}]}U^{[p^{e-1}]}U^{[p^{e-2}]}\cdots UX^{-1}R_{a}^{\alpha}  \subseteq X^{[p^{e}]}W_{a}^{^{[p^{e}]}},$$ and so $U^{[p^{e-1}]}U^{[p^{e-2}]}\cdots UR_{a}^{\alpha}  \subseteq W_{a}^{^{[p^{e}]}}$. Then $U^{[p^{e-1}]}U^{[p^{e-2}]}\cdots UR^{\alpha}  \subseteq W^{^{[p^{e}]}}$ since $a$ is not a zero divisor on $R^{\alpha}/W^{[p^{e}]}$, which contradicts with our assumption.
\end{proof}

Next, we will give a generalization of Proposition 5.4 in \cite{K3} to $R$, which will give us an effective method for finding the $U$-special prime ideals containing a $U$-special prime $P$ minimally in an important case.

\begin{proposition} \label{last3}
	Let $P$ be a $U$-special prime ideal of $R$ such that $\mathcal{K} \nsubseteq PR^{\alpha}$. Assume that the $\alpha$-th column of $U$ is zero and $PR^{\alpha}=(PR^{\alpha})^{\star U}$. Then the set of $U$-special prime ideals minimally containing $P$ is finite.
\end{proposition}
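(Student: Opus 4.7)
The plan is to proceed by induction on $\alpha$, invoking as hypothesis the case $\alpha - 1$ of Theorem \ref{1} (as declared in the text just before Lemma \ref{last1}). The assumption that the $\alpha$-th column of $U$ vanishes produces the block decomposition
\[
U = \begin{pmatrix} A & 0 \\ b^t & 0 \end{pmatrix},
\]
with $A$ an $(\alpha-1) \times (\alpha-1)$ matrix over $R$ and $b \in R^{\alpha-1}$. The key technical tool is the projection $\pi : R^\alpha \to R^{\alpha-1}$ onto the first $\alpha - 1$ coordinates. I would first verify that $\pi$ commutes with entrywise Frobenius (so $\pi(V^{[p^e]}) = \pi(V)^{[p^e]}$), with $\Ie_e(-)$ (via the basis description of Proposition 3.4(2)), and hence with $\star$-closure; together with the identity $\pi(Uv) = A\pi(v)$, this shows that $\pi$ sends $U$-compatible submodules of $R^\alpha$ to $A$-compatible submodules of $R^{\alpha-1}$. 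Applied to $V = PR^\alpha$, the hypothesis $PR^\alpha = (PR^\alpha)^{\star U}$ projects to $PR^{\alpha-1} = (PR^{\alpha-1})^{\star A}$, so $P$ is automatically $A$-special in the analogous normal form. Since each iterate $U_e := U^{[p^{e-1}]} \cdots U$ also has vanishing $\alpha$-th column and upper-left block $A_e := A^{[p^{e-1}]} \cdots A$, we have $\pi(U_e R^\alpha) = A_e R^{\alpha-1}$, so $\mathcal{K}_A := \pi(\mathcal{K})$ is the stable value of the sequence $\{\Ie_e(A_e R^{\alpha-1})\}$ for $A$.

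Next, for each $U$-special prime $Q$ minimally containing $P$, set $W_Q := (QR^\alpha)^{\star U}$ so that $Q = \ann_R R^\alpha/W_Q$, and take $W'_Q := \pi(W_Q) = (QR^{\alpha-1})^{\star A}$. By Lemma \ref{sp1}(1), every associated prime $Q'$ of $\ann_R R^{\alpha-1}/W'_Q$ containing $P$ is $A$-special, and the inclusion $Q \subseteq \ann_R R^{\alpha-1}/W'_Q$ forces $Q$ to lie in at least one such $Q'$. Split into cases. If $\mathcal{K}_A \not\subseteq PR^{\alpha-1}$, the induction hypothesis (Theorem \ref{1} for $\alpha - 1$) applied to $A$ and $P$ bounds the $A$-special primes, hence the possible $Q'$, to a finite set. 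If instead $\mathcal{K}_A \subseteq PR^{\alpha-1}$ while $\mathcal{K} \not\subseteq PR^\alpha$, the non-degeneracy of $\mathcal{K}$ is concentrated in the last coordinate; an invertible change of basis $X$ as in Lemma \ref{last2}, chosen over a suitable localization $R_a$ to mix the last coordinate into the first $\alpha - 1$, replaces $U$ by $U_1$ whose non-degeneracy now projects non-trivially, reducing to the first sub-case.

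The main obstacle is controlling the fibers of the map $Q \mapsto Q'$, since different $U$-special primes may share the same $A$-special $Q'$. To bound each fiber I would exploit the last row $b^t$: for any lift $w = (w', w_\alpha) \in W_Q$ of $w' \in W'_Q$, the relation $(Aw', b^t w') \in W_Q^{[p]}$ determines the last-coordinate data of $W_Q$ modulo the $\alpha$-th coordinate of $W_Q^{[p]}$, which is itself an explicit ideal of $R$ depending on $W'_Q$. Combining this with Lemma \ref{last1} applied to a carefully chosen $a \in Q' \setminus P$ presents $Q$ as a minimal prime of the explicitly computable $\star$-closure $\bigl((P + aR)R^\alpha\bigr)^{\star U}$, yielding only finitely many candidates for $Q$ above each $Q'$ and completing the induction. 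The delicate point here will be verifying that this fiber bound is genuinely finite rather than merely parametrizing finitely many $\star$-closures by continuous data, which I expect to follow from the Noetherianity of $R$ applied to the chain used in constructing the $\star$-closure.
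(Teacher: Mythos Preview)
Your overall plan matches the paper's: project via $\pi$ onto the first $\alpha-1$ coordinates, use the block form with upper-left corner $A$ (the paper's $U_0$), and split on whether $\mathcal{K}_A\subseteq PR^{\alpha-1}$. But both branches of your argument have genuine gaps.

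In the case $\mathcal{K}_A\subseteq PR^{\alpha-1}$ you propose an invertible change of basis $X$ over $R_a$ to ``mix the last coordinate into the first $\alpha-1$'' so that the new $U_1$ has non-degenerate projection. This cannot work. If $X$ does not fix the line $Re_\alpha$ then $U_1=a^\nu X^{[p]}UX^{-1}$ no longer has vanishing last column, so you have left the setting of the proposition entirely and your ``first sub-case'' no longer applies. If $X$ does fix $Re_\alpha$, a direct computation shows the new upper-left block is $a^\nu X_0^{[p]}AX_0^{-1}$, a twisted conjugate of $A$, whose stable module is $X_0\mathcal{K}_A$ over $R_a$; this is still contained in $PR_a^{\alpha-1}$, so nothing has improved. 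The paper handles this case by an entirely different mechanism: since the upper-left block of the iterate $U^{[p^{e-1}]}\cdots U$ lies in $P^{[p^e]}$, only the last row $(g_1,\dots,g_{\alpha-1},0)$ survives, and one checks directly that $g_iQ\subseteq Q^{[p^e]}$ for each $i$, reducing $Q$ to the $1\times 1$ (Katzman--Schwede) situation.

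In the case $\mathcal{K}_A\not\subseteq PR^{\alpha-1}$ your fiber-bounding step is where the argument breaks. You want to apply Lemma~\ref{last1} with ``a carefully chosen $a\in Q'\setminus P$'', but that lemma requires $a\in Q\setminus P$, and you only have $Q\subseteq Q'$, not $Q'\subseteq Q$. Moreover, your $Q'$ is an arbitrary associated prime of $\ann_R R^{\alpha-1}/\pi(W_Q)$; the induction hypothesis (Theorem~\ref{1} for $\alpha-1$) only bounds those $A$-special primes with $\mathcal{K}_A\not\subseteq Q'R^{\alpha-1}$, and nothing prevents your $Q'$ from failing this. The paper avoids both problems by constructing, \emph{before} looking at $Q$, a submodule $\Ie_1\bigl(Ul(\tau\mathcal{K}_0)\bigr)^{\star U}$ (where $\tau$ is the intersection of the finitely many $A$-special primes minimally over $P$) that lies inside \emph{every} $W_Q$ yet not inside $PR^\alpha$. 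Any entry $a\notin P$ of a generating matrix for this submodule then works uniformly: if $a\in Q$ one invokes Lemma~\ref{last1}, and if $a\notin Q$ the column containing $a$ lies in $W_Q$, so Lemma~\ref{last2} gives a genuine reduction to an $(\alpha-1)\times(\alpha-1)$ matrix. This uniform choice of $a$ is the idea your proposal is missing.
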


\begin{proof}
	Let $Q$ be a $U$-special prime ideal minimally containing $P$ and $W=(QR^{\alpha})^{\star U}$. Let $U_{0}$ be the top left $(\alpha-1)\times(\alpha-1)$ submatrix of $U$. Since $PR^{\alpha}=(PR^{\alpha})^{\star U}\Leftrightarrow UPR^{\alpha}\subseteq P^{[p]}R^{\alpha}$, all entries of $U$ are in $(P:P^{[p]})$. Therefore, $U_{0}PR^{\alpha-1}\subseteq P^{[p]}R^{\alpha-1}$, and so $P$ is $U_{0}$-special. Let $\mathcal{K}_{0}$ be the stable value of $\{ \Ie_{e}(U_{0}^{[p^{e-1}]}U_{0}^{[p^{e-2}]}\cdots U_{0}R^{\alpha-1}) \}_{e>0}$ as in Theorem \ref{2}. We now split our proof into two parts. Assume first that $\mathcal{K}_{0}\subseteq PR^{\alpha-1}$, i.e. $\Ie_{e}(U_{0}^{[p^{e-1}]}U_{0}^{[p^{e-2}]}\cdots U_{0}R^{\alpha-1}) \subseteq PR^{\alpha-1}$ for some $e>0$.
	\begin{enumerate}
		\item[1)] Let $(g_{1},\dots,g_{\alpha-1},0)$ be the last row of the matrix $U^{[p^{e-1}]}U^{[p^{e-2}]}\cdots U$. Note that its top left $(\alpha-1)\times(\alpha-1)$ submatrix is $U_{0}^{[p^{e-1}]}U_{0}^{[p^{e-2}]}\cdots U_{0}$. By our assumption, all entries of $U_{0}^{[p^{e-1}]}U_{0}^{[p^{e-2}]}\cdots U_{0}$ are in $P^{[p^{e}]}\subseteq Q^{[p^{e}]}$. Therefore, $\Ie_{e}(U_{0}^{[p^{e-1}]}U_{0}^{[p^{e-2}]}\cdots U_{0}R^{\alpha-1})\subseteq QR^{\alpha-1}$. Then by Proposition \ref{rmklast}, $P$ and $Q$ are $U_{0}^{[p^{e-1}]}U_{0}^{[p^{e-2}]}\cdots U_{0}$-special, and so the action of $U^{[p^{e-1}]}U^{[p^{e-2}]}\cdots U$ is the same action of a matrix $U_{e}$ whose first $\alpha-1$ rows are zero and last row is $(g_{1},\dots,g_{\alpha-1},0)$, and so we replace $U^{[p^{e-1}]}U^{[p^{e-2}]}\cdots U$ with $U_{e}$ without effecting any issues. We now define inductively $V_{0}=QR^{\alpha}$ and $V_{i+1}=\Ie_{e}(U_{e}V_{i})+V_{i}$ for all $i\geq0$. Since
		\[
		U_{e}QR^{\alpha}=\{ (0,\dots,0,\sum_{i=1}^{\alpha-1}g_{i}q_{i})^{t} \mid \forall i, q_{i}\in Q\},
		\]
		\[
		\Ie_{e}(U_{e}QR^{\alpha})=\{ (0,\dots,0,v) \mid v\in\Ie_{e}(\sum_{i=1}^{\alpha-1}g_{i}Q) \}.
		\]
		Therefore, the sequence $\{ V_{i} \}_{i\geq0}$ stabilizes at $V_{1}=\Ie_{e}(U_{e}QR^{\alpha})+QR^{\alpha}$. By definition of $\star$-closure, we have $QR^{\alpha}\subseteq V_{1} \subseteq W$, and so $\ann_{R}R^{\alpha}/V_{1}=Q$. 
		Furthermore, we have
		\[
		\ann_{R}\frac{R}{\Ie_{e}(\sum_{i=1}^{\alpha-1}g_{i}Q)}=\ann_{R}\frac{R^{\alpha}}{\Ie_{e}(U_{e}QR^{\alpha})}\subseteq Q \text{ since } \Ie_{e}(U_{e}QR^{\alpha}) \subseteq V_{1},
		\]
		which implies that
		\[
		\Ie_{e}(\sum_{i=1}^{\alpha-1}g_{i}Q)=\sum_{i=1}^{\alpha-1}\Ie_{e}(g_{i}Q) \subseteq Q,
		\]
		i.e. $\Ie_{e}(g_{i}Q)\subseteq Q \Leftrightarrow g_{i}Q\subseteq Q^{[p^{e}]}$ for all $1\leq i<\alpha$. Hence, $Q$ is $g_{i}$-special for all $1\leq i<\alpha$. On the other hand, at least for one $g_{i}$ we must have $g_{i}\notin P^{[p^{e}]}$ so that we do not get a contradiction with our assumption $\mathcal{K}\nsubseteq PR^{\alpha}$. We can now produce all such $Q$ using the Katzman-Schwede algorithm.
	\end{enumerate}
	
	Let $\tau\subset R$ be intersection of the finite set of $U_{0}$-special prime ideals of $R$ minimally containing $P$. Let $ \rho:R^{\alpha}\rightarrow R^{\alpha-1}$ be the projection onto first $\alpha-1$ coordinates, and let $J=\ann_{R}R^{\alpha-1}/\rho(W)$. Then since $U_{0}\rho(W)=\rho(UW)\subseteq \rho(W^{[p]})=\rho(W)^{[p]}$, $J$ is $U_{0}$-special. Note that $Q\subseteq J$, and so $P\subsetneq J$. Assume now that $\mathcal{K}_{0}\nsubseteq PR^{\alpha-1}$.
	
	\begin{enumerate}
		\item[2)] We now compute $(\tau^{[p^{e}]}\mathcal{K}_{0})^{\star U_{0}}$ as the stable value of
		\begin{align*}
			L_{0}&=\tau^{[p^{e}]}\mathcal{K}_{0}\\
			L_{1}&=\Ie_{1}(U_{0}L_{0})+L_{0}=\tau^{[p^{e-1}]}\Ie_{1}(U_{0}\mathcal{K}_{0})+\tau^{[p^{e}]}\mathcal{K}_{0}=\tau^{[p^{e-1}]}\mathcal{K}_{0}+\tau^{[p^{e}]}\mathcal{K}_{0}\\
			\vdots\\
			L_{e}&=\tau\mathcal{K}_{0}+L_{e-1}\\
			\vdots
		\end{align*}
		and we deduce that $\tau\mathcal{K}_{0}\subseteq L_{e}\subseteq (\tau^{[p^{e}]}\mathcal{K}_{0})^{\star U_{0}}$. On the other hand, since $J$ is a $U_{0}$-special ideal strictly containing $P$, $\tau \subseteq \sqrt{J}$. Thus, for all large $e\geq 0$, we have $\tau^{[p^{e}]} \subseteq J$. Therefore,
		\[
		\tau\mathcal{K}_{0}\subset (\tau^{[p^{e}]}\mathcal{K}_{0})^{\star U_{0}}\subseteq (JR^{\alpha-1})^{\star U}\subseteq \rho(W)^{\star U_{0}}=\rho(W).
		\] 
		where the last equality follows from the fact that $UW\subseteq W^{[p]}$. Moreover, since $\tau \nsubseteq P$, we have $\tau\mathcal{K}_{0} \nsubseteq PR^{\alpha-1}$.
		
		\item[3)] Now we define $\bar{v}=(v_{1},\dots,v_{\alpha-1},0)^{t}$ for $v=(v_{1},\dots,v_{\alpha-1},v_{\alpha})^{t}\in R^{\alpha}$, and $\widebar{V}=\{\bar{v}\mid v\in V\}$ for any submodule $V\subseteq R^{\alpha}$. Let $l:R^{\alpha-1}\rightarrow R^{\alpha-1}\oplus R$ be the natural inclusion $l(v)=v\oplus0$. Note that $\widebar{V}=l(\rho(V))$. Then we also define $W_{0}=\{ w\in W\mid \rho(w)\in\tau\mathcal{K}_{0} \}$ and note that (2) implies that $\rho(W_{0})=\tau\mathcal{K}_{0}$. We have $W_{0}^{\star U} \subseteq W^{\star U}=W$ and $W_{0}^{\star U}=\Ie_{1}(UW_{0})^{\star U} +W_{0}$. Since $UW_{0}=U\widebar{W_{0}}=Ul(\tau\mathcal{K}_{0})$, $\Ie_{1}(Ul(\tau\mathcal{K}_{0}))^{\star U} \subseteq W_{0}^{\star U}\subseteq W$. On the other hand, if $\Ie_{1}(Ul(\tau\mathcal{K}_{0}))^{\star U}\subseteq PR^{\alpha}$, then
		\begin{align*}
			\Ie_{1}(Ul(\tau\mathcal{K}_{0})) \subseteq PR^{\alpha} &\Rightarrow Ul(\tau\mathcal{K}_{0}) \subseteq P^{[p]}R^{\alpha} \Rightarrow \rho(Ul(\tau\mathcal{K}_{0})) \subseteq \rho(P^{[p]}R^{\alpha})\\ &\Rightarrow U_{0}\tau\mathcal{K}_{0} \subseteq P^{[p]}R^{\alpha-1} \Rightarrow \tau^{[p]}U_{0}\mathcal{K}_{0}\subseteq P^{[p]}R^{\alpha-1}\\ &\Rightarrow \Ie_{1}(\tau^{[p]}U_{0}\mathcal{K}_{0})\subseteq PR^{\alpha-1} \Rightarrow \tau\Ie_{1}(U_{0}\mathcal{K}_{0})\subseteq PR^{\alpha-1}\\ &\Rightarrow \tau\mathcal{K}_{0}\subseteq PR^{\alpha-1}
		\end{align*}
		which contradicts with (2). Hence, we also have $\Ie_{1}(Ul(\tau\mathcal{K}_{0}))^{\star U} \nsubseteq PR^{\alpha}$.
		\item[4)] Let $M'$ be a matrix whose columns generate $\Ie_{1}(Ul(\tau\mathcal{K}_{0}))^{\star U}\subseteq W$. Choose an entry $a$ of $M'$ which is not in $P$. Then
		\begin{enumerate}
			\item If $a\in Q$, Lemma \ref{last1} shows that $Q$ is among the minimal prime ideals of $\ann_{R}R^{\alpha}/((P+aR)R^{\alpha})^{\star U}$.
			\item If $a\notin Q$, we shall apply Lemma \ref{last2} with the matrix $X$ with entries in $R_{a}$ such that the $\alpha$-th elementary vector $e_{\alpha}\in W_{1}=XW_{a}\cap R^{\alpha}$ and $U_{1}$ as in Lemma \ref{last2}. Then $R^{\alpha}/W_{1}\cong R^{\alpha-1}/\rho(W_{1})$, and so $Q$ is a minimal prime $\ann_{R} R^{\alpha-1}/\rho(W_{1})$. Let $U_{2}$ be the top left $(\alpha-1)\times(\alpha-1)$ submatrix of $U_{1}$. Then since $U_{2}\rho(W_{1})\subseteq \rho(U_{1}W_{1})\subseteq \rho(W_{1}^{[p]})=\rho(W_{1})^{[p]}$, $\ann_{R} R^{\alpha-1}/\rho(W_{1})$ is $U_{2}$-special, and so is $Q$. 
		\end{enumerate}
	\end{enumerate}  
	This shows that in any case $Q$ is an element of a finite set of prime ideals. Hence, there are only finitely many $U$-special prime ideals of $R$ which contain $P$ minimally.
\end{proof}

Next Theorem is a generalization of Theorem 5.5 in \cite{K3} to $R$, and it provides an effective algorithm for finding all $U$-special prime ideals $P$ of $R$ with the property that $\mathcal{K} \nsubseteq PR^{\alpha}$.

\begin{theorem}\label{last4}
	Let $P$ a $U$-special prime ideal of $R$ such that $\mathcal{K} \nsubseteq PR^{\alpha}$, and $Q$ be a $U$-special prime ideal minimally containing $P$. Let $M$ be a matrix whose columns generate $(PR^{\alpha})^{\star U}$.
	\begin{enumerate}
		\item If $PR^{\alpha} \subsetneq \im M$, then either
		\begin{enumerate}
			\item all entries of $M$ are in $Q$, and so there exist an element $a\in Q\setminus P$ and $Q$ is among the minimal prime ideals of $\ann_{R}R^{\alpha}/((P+aR)R^{\alpha})^{\star U}$, or
			\item there exists an entry of $M$ which is not in $Q$, and $Q$ is a special prime over an $(\alpha-1)\times(\alpha-1)$ matrix.
		\end{enumerate}
		\item If$PR^{\alpha}=\im M$, then there exist an element $a_{1}\in R\setminus P$, an element $g\in (P^{[p]}:P)$, and an $\alpha\times\alpha$ matrix $V$ such that for some $\mu\gg0$, we have $a_{1}^{\mu}U \equiv gV \text{ modulo } P^{[p]}$. If $d=\det V$, then either
		\begin{enumerate}
			\item $d\in P$, and $Q$ is a special prime ideal over an $(\alpha-1)\times(\alpha-1)$ matrix, or
			\item $d \in Q\setminus P$, and $Q$ is among the minimal prime ideals of $\ann_{R}R^{\alpha}/((P+dR)R^{\alpha})^{\star U}$, or
			\item $d \notin Q$, and $Q$ is a $g$-special ideal of $R$.
		\end{enumerate}
	\end{enumerate}
\end{theorem}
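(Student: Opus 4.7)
The plan is to mimic the case-analysis of Proposition \ref{last3} and its counterpart Theorem 5.5 of \cite{K3}, keeping $W=(QR^{\alpha})^{\star U}$ as the canonical Frobenius-closed submodule cutting out $Q$ and using Lemmas \ref{last1} and \ref{last2} either to pin $Q$ among the minimal primes of an explicit ideal or to reduce to a smaller matrix so that the inductive hypothesis on $\alpha$ applies.

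For Case 1, the strict inclusion $PR^{\alpha} \subsetneq \im M$ forces some entry of $M$ to lie outside $P$. If every entry of $M$ lies in $Q$, any such entry supplies $a \in Q \setminus P$ and Lemma \ref{last1} places $Q$ among the finitely many minimal primes of $\ann_R R^{\alpha}/((P+aR)R^{\alpha})^{\star U}$. If instead some entry $a$ of $M$ lies outside $Q$ (hence outside $P$), Lemma \ref{last2} applies: using the column of $M$ containing $a$, one builds an invertible matrix $X$ over $R_a$ placing a standard basis vector $e_i$ inside $W_1 = X W_a \cap R^{\alpha}$; then projecting away the $i$-th coordinate produces an $(\alpha-1)\times(\alpha-1)$ matrix $U_2$ for which $Q$ is $U_2$-special, and induction on $\alpha$ finishes.

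For Case 2, the hypothesis $PR^{\alpha}=\im M$ translates into $UPR^{\alpha}\subseteq P^{[p]}R^{\alpha}$, so every entry of $U$ lies in $(P^{[p]}:P)$. To produce the decomposition $a_1^{\mu}U \equiv gV \pmod{P^{[p]}}$, I invoke Fedder's Lemma \ref{fedder}, which identifies $F_*^{1}(P^{[p]}:P)/F_*^{1}P^{[p]}$ with $\homm_{R/P}(F_*^{1}(R/P),R/P)$. Over the regular locus of the domain $R/P$ this is an invertible $F_*^{1}(R/P)$-module, so inverting a suitable $a_1 \in R \setminus P$ exposing a point of the regular locus produces a cyclic generator $g \in (P^{[p]}:P)$; writing each entry of $U$ as a multiple of $g$ in the localized module and clearing the common denominator $a_1^{\mu}$ yields the integer matrix $V$.

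The resulting trichotomy on $d=\det V$ resolves as follows. If $d\in P$, then $V$ is singular modulo $P$, so a suitable invertible change of basis zeroes a column of $a_1^{\mu}U$ modulo $P^{[p]}$ and reduces the problem to an $(\alpha-1)\times(\alpha-1)$ matrix covered by the inductive hypothesis. If $d \in Q \setminus P$, Lemma \ref{last1} with $a=d$ closes the case. If $d \notin Q$, then $V$ is invertible over $R_d$; disposing first of the side case $a_1 \in Q \setminus P$ by Lemma \ref{last1} applied to $a_1$, one may assume $a_1 \notin Q$, and the change of basis $V^{-1}$ turns $UW \subseteq W^{[p]}$ modulo $P^{[p]}$ into $gW \subseteq W^{[p]}$, so $Q$ becomes a $g$-special prime detected by the Katzman--Schwede algorithm applied to $g$. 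The main obstacle will be subcase (2c): carefully unwinding the localized scalarization of $U$ by $g$ back to a genuine $g$-special condition on $R$, via a variant of the $W_1 = X W_a \cap R^{\alpha}$ device of Lemma \ref{last2}, so that finiteness on $R_d$ descends to finiteness on $R$.
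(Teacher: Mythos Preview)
Your plan is the paper's proof: same case split, same use of Lemmas \ref{last1}--\ref{last2} and Fedder's lemma to produce $g$ and $V$, and you correctly flag (2c) as the delicate step (the paper takes $X=I_\alpha$, $a=da_1$ in Lemma \ref{last2} and observes that $(QR_a^\alpha)^{\star U}=(QR_a^\alpha)^{\star gI_\alpha}$ because $V$ is invertible over $R_a$, so no genuine change of basis by $V^{-1}$ is needed). The only place to sharpen is (2a): zeroing a column of $U$ does not by itself hand you an $(\alpha-1)\times(\alpha-1)$ problem---it places you in the hypotheses of Proposition \ref{last3} (last column zero and $PR^\alpha=(PR^\alpha)^{\star U_1}$), and it is that proposition's internal dichotomy on $\mathcal{K}_0$ and the $\tau$-construction that actually effect the size reduction, so invoke it explicitly rather than just ``the inductive hypothesis.''
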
 

\begin{proof} Let $W\subseteq R^{\alpha}$ be such that $UW\subseteq W^{[p]}$ and $Q=\ann_{R}R^{\alpha}/W$. When all entries of $M$ are in $P$, $\im M \subseteq PR^{\alpha}$, i.e., $\im M =(PR^{\alpha})^{\star U}=PR^{\alpha}$. Thus, if we are in case 1., we have at least one entry $a$ of $M$ which is not in $P$. If $a\in Q$, by Lemma \ref{last1}, $Q$ is among the minimal primes of $\ann_{R}R^{\alpha}/((P+aR)R^{\alpha})^{\star U}$. If $a\notin Q$, by Lemma \ref{last2}, $Q$ is a minimal prime of $\ann_{R}R^{\alpha}/W_{1}$ such that $U_{1}W_{1}\subseteq W_{1}^{[p]}$, where $U_{1}$ and $W_{1}$ as in Lemma \ref{last2}. On the other hand, since $a$ becomes a unit in $R_{a}$, we can choose the invertible matrix $X$ with entries in $R_{a}$ such that $W_{1}=XW_{a}\cap R^{\alpha}$ contains the $\alpha$-th elementary vector $e_{\alpha}$. Then we have $R^{\alpha}/W_{1}\cong R^{\alpha-1}/\rho(W_{1})$, where $\rho:R^{\alpha}\rightarrow R^{\alpha-1}$ is the projection onto first $\alpha-1$ coordinates. Let $U_{2}$ be the top left $(\alpha-1)\times(\alpha-1)$ submatrix of $U_{1}$. Then $\ann_{R}R^{\alpha}/W_{1}=\ann_{R}R^{\alpha-1}/\rho(W_{1})$ and $U_{2}\rho(W_{1})\subseteq \rho(U_{1}W_{1})\subseteq \rho(W_{1}^{[p]})=\rho(W_{1})^{[p]}$. Therefore, $\ann_{R}R^{\alpha}/W_{1}$ is $U_{2}$-special, and so is $Q$.
	
	Assume now that we are in case 2., by definition of $\star$-closure $UPR^{\alpha} \subseteq P^{[p]}R^{\alpha}$, i.e., the entries of $U$ are in $(P^{[p]}:P)$. On the other hand, by Lemma \ref{fedder}, if $A=R/P$, $F_{*}((P^{[p]}:P)/P^{[p]}) \cong\homm_{A}(F_{*}A,A)$ is rank one $F_{*}A$-module. This means that $(P^{[p]}:P)/P^{[p]}$ is rank one $A$-module, and so we can find an element $g \in (P^{[p]}:P)\setminus P^{[p]}$ such that $(P^{[p]}:P)/P^{[p]}$ is generated by $g+P^{[p]}$ as an $A$-module. Also we can find an element $a_{1} \in R\setminus P$ such that the localization of $(P^{[p]}:P)/P^{[p]}$ at $a_{1}$ is generated by $g/1+P_{a_{1}}^{[p]}$ as an $A_{a_{1}}$-module and hence as an $R_{a_{1}}$-module. If $a_{1}\in Q$, we can find $Q$ as in the case 1.(a), thus, we assume that $a_{1}\notin Q$. Then for any entry $u$ of $U$, working in the localization, we have an expression
	\[
	\frac{u}{1}+P_{a_{1}}^{[p]}=\frac{r}{a_{1}^{w_{1}}}\frac{g}{1}+P_{a_{1}}^{[p]}
	\]
	which implies that $\dfrac{u-rg}{a_{1}^{w_{1}}} \in P_{a_{1}}^{[p]}$, i.e., $\dfrac{u- rg}{a_{1}^{w_{1}}}=\dfrac{r'}{a_{1}^{w_{2}}}$, where $r \in R$, $r' \in P^{[p]}$ and $w_{1},w_{2} \in \N$. Thus,
	\[
	a_{1}^{w_{1}+w_{2}}u=a_{1}^{w_{2}}rg+a_{1}^{w_{1}}r'
	\]
	Therefore, we can write $a_{1}^{\mu}U=gV+V'$ for some $\mu\gg 0$ and $\alpha \times \alpha$ matrices $V$ and $V'$ with entries in $R$ and $P^{[p]}$, respectively. Then by Proposition \ref{rmklast}, we may replace $V'$ with the zero matrix, since $\Ie(V'R^{\alpha}) \subseteq PR^{\alpha}$. Let $d=\det V$. We now consider three cases:
	\begin{enumerate}
		\item If $d \in P$, then the determinant of $V$ in the fraction field $\mathbb{F}$ of $A$, say $\bar{d}$, will be zero. So we can find an invertible matrix $X$ with entries in $\mathbb{F}$ such that the last column of $VX^{-1}$ is zero, and so is $UX^{-1}$. Let $a_{2}$ is the product of all denominators of entries of $X$ and $X^{-1}$, i.e. the entries of $X$ and $X^{-1}$ are in $R_{a_{2}}$. If $a_{2}\in Q$, we can find $Q$ as in the case 1.(a) again, thus, we also assume that $a_{2}\notin Q$. Let $a=a_{1}a_{2}$. By Lemma \ref{last2}, $P$ and $Q$ are $U_{1}$-special prime ideals where $U_{1}=a^{\nu}X^{[p]}UX^{-1}$ whose last column is zero. Then since $PR^{\alpha}=(PR^{\alpha})^{\star U} \Leftrightarrow UPR^{\alpha}\subseteq P^{[p]}R^{\alpha}$, we also have
		\[
		U_{1}PR^{\alpha}=a^{\nu}X^{[p]}UX^{-1}PR^{\alpha}\subseteq a^{\nu}X^{[p]}UPR^{\alpha} \subseteq UPR^{\alpha} \subseteq P^{[p]}R^{\alpha}
		\]
		which implies $PR^{\alpha}=(PR^{\alpha})^{\star U_{1}}$. Hence, we can produce $Q$ as in Proposition \ref{last3}.
		\item If $d\in Q\setminus P$, then by Lemma \ref{last1}, $Q$ is among minimal prime ideals of $\ann_{R}R^{\alpha}/((P+dR)R^{\alpha})^{\star U}$.
		\item If $d\notin Q$, let $a=da_{1}$, $W=(QR^{\alpha})^{\star U}$ and $X=I_{\alpha}$ be the $\alpha\times\alpha$ identity matrix. Then by Lemma \ref{last2}, $Q$ is a minimal prime ideal of $\ann_{R}R^{\alpha}/W_{1}$ where $W_{1}=(QR^{\alpha})_{a}^{\star U}\cap R^{\alpha}$. By definition of $\star$-closure $(QR^{\alpha})_{a}^{\star U}=(QR_{a}^{\alpha})^{\star U}$ is the stable value of the sequence
		\begin{align*}
			L_{0}&=QR_{a}^{\alpha}\\
			L_{1}&=\Ie_{1}(UQR_{a}^{\alpha})+QR^{\alpha}=\Ie_{1}(gVQR_{a}^{\alpha})+QR_{a}^{\alpha}=\Ie_{1}(gQR^{\alpha})_{a}+QR_{a}^{\alpha}\\
			L_{2}&=\\
			\vdots&
		\end{align*}
		which also equals to $(QR^{\alpha})^{\star gI_{\alpha}}$. The third equality for $L_{1}$ is because of the fact that $\Ie_{e}(-)$-operation commutes with localization and $V$ is invertible. This implies that $\ann_{R}R^{\alpha}/W_{1}$ is $gI_{\alpha}$-special, and so is $Q$. Therefore, $Q$ is $g$-special and can be computed using the Katzman-Schwede algorithm, since $g \notin P^{[p]}$.
	\end{enumerate}
	This method also shows that for a given $U$-special ideal $P$, there are only finitely many $U$-special prime ideals minimally containing $P$.
\end{proof}

For the sake of integrity, we shall give the proof of Theorem \ref{1}. The main difference between our methods and the methods in \cite[Section 5]{K3} is that we do not use the aid of injective hulls of residue fields although our results are identical with the results in \cite[Section 5]{K3} over power series rings.

\begin{proof}[Proof of Theorem \ref{1}] The proof is by induction on $\alpha$. The case $\alpha=1$ is established in section \ref{section:Katzman-Schwede Algorithm}. Assume that $\alpha>0$ and the claim is true for $\alpha-1$. Since zero ideal is always a $U$-special prime ideal of $R$, we start with $0$ and use Theorem \ref{last4} to find $U$-special prime ideals minimally containing $0$. Continuing this process recursively gives us bigger $U$-special prime ideals at each steps. Therefore, since $R$ is of finite dimension, the number of steps in this process is bounded by the dimension of $R$. Hence, there are only finitely many $U$-special prime ideals with the desired property.
\end{proof}

Next we turn Theorem \ref{last4} into an algorithm which gives us a generalization of the Katzman-Zhang algorithm to $R$. Note also that over power series rings the following is identical with the Katzman-Zhang algorithm.

\subsection*{Intput:}
An $\alpha\times\alpha$ matrix $U$ with entries in $R$ such that $\mathcal{K}\neq 0$. 
\subsection*{Output:}
Set of all $U$-special prime ideals $P$ of $R$ with the property that $\mathcal{K} \nsubseteq PR^{\alpha}$.
\subsection*{Initialize:}
$\mathcal{A}_{R^{\alpha}}=\{0\}, \mathcal{B}=\emptyset$.
\subsection*{Execute the following:}
If $\alpha =1$, use the Katzman-Schwede Algorithm to find desired primes, put these in $\mathcal{A}_{R^{\alpha}}$, output $\mathcal{A}_{R^{\alpha}}$ and stop.\\
If $\alpha > 1$, then while $\mathcal{A}_{R^{\alpha}} \neq \mathcal{B}$, pick any $P \in \mathcal{A}_{R^{\alpha}}\setminus\mathcal{B}$. If $\mathcal{K}\subseteq PR^{\alpha}$, add $P$ to $\mathcal{B}$, if not, write $W=(PR^{\alpha})^{\star U}$ as the image of a matrix $M$ and do the following:
\begin{enumerate}
	\item If there is an entry $a$ of $M$ which is not in $P$, then;
	\begin{enumerate}
		\item Find the minimal primes of $\ann_{R}\dfrac{R^{\alpha}}{((P+aR)R^{\alpha})^{\star U}}$, and add them to $\mathcal{A}_{R^{\alpha}}$,
		\item Find an invertible $\alpha \times \alpha$ matrix $X$ with entries in $R_{a}$ such that the $\alpha$-th elementary vector $e_{\alpha}\in XW_{a}\cap R^{\alpha}$, and choose $\nu\gg 0$ such that $U_{1}=a^{\nu}X^{[p]}UX^{-1}$ has entries in $R$. Let $U_{0}$ be the top left $(\alpha-1)\times(\alpha-1)$ submatrix of $U_{1}$. Then apply the algorithm recursively to $U_{0}$ and add resulting primes to $\mathcal{A}_{R^{\alpha}}$.
	\end{enumerate}
	\item If $\im M=PR^{\alpha}$, then find elements $a_{1} \in R\setminus P$, $g \in (P^{[p]}:P)$, and an $\alpha \times \alpha$ matrix $V$, and $\mu \gg 0$ such that $a_{1}^{\mu}U \equiv gV$ modulo $P^{[p]}$. Compute $d= \det V$ and do the following:
	\begin{enumerate}
		\item If $d \in P$, find an element $a_{2} \in R \setminus P$ and an invertible matrix $X$ with entries in $R_{a_{2}}$ such that the last column of $UX^{-1}$ is zero. Find $\nu \gg 0$ such that the entries of $U_{1}=(a_{1} a_{2})^{\nu} X^{[p]}UX^{-1}$ are in $R$. Let $U_{0}$ be the top left $(\alpha-1)\times(\alpha-1)$ submatrix of $U_{1}$, and $\mathcal{K}_{0}$ be the stable value of $\{\Ie_{e}(\im U_{0}^{[p^{e}-1]}U_{0}^{[p^{e}-2]} \cdots U_{0})\}_{e>0}$ as in Theorem \ref{2}. Then;
		\begin{enumerate}
			\item If $\mathcal{K}_{0} \subseteq PR^{\alpha -1}$, write the last row of the matrix $U_{1}^{[p^{e-1}]}U_{1}^{[p^{e-2}]} \cdots U_{1}$ as $(g_{1}, \dots ,g_{\alpha -1}, 0)$ and apply the Katzman-Schwede Algorithm to the case $u=g_{i}$ for each $i$, and add resulting primes to $\mathcal{A}_{R^{\alpha}}$,
			\item If $\mathcal{K}_{0}\nsubseteq PR^{\alpha -1}$, find recursively all prime ideals for $U_{0}$ which contain $P$ minimally and denote their intersection with $\tau$. Compute $\Ie_{1}(U_{1}l(\tau \mathcal{K}_{0}))^{\star U_{1}}$, and write this as the image of a matrix $M'$. Find an entry $a'$ of $M'$ not in $P$. Now;
			\begin{enumerate}
				\item Add the minimal primes of $\ann_{R}\dfrac{R^{\alpha}}{((P+a'R)R^{\alpha})^{\star U_{1}}}$ to $\mathcal{A}_{R^{\alpha}}$,
				\item Find an invertible matrix $X$ with entries in $R_{a'}$ such that the $\alpha^{\text{th}}$ elementary vector $e_{\alpha}\in X(\im M')_{a'} \cap R^{\alpha}$. Find $\nu \gg 0$ such that $U_{2}=(a')^{v}X^{[p]}U_{1}X^{-1}$ has entries in $R$. Let $U_{3}$ be the top left $(\alpha-1)\times(\alpha-1)$ submatrix of $U_{2}$. Apply the algorithm recursively to $U_{3}$, and add resulting primes to $\mathcal{A}_{R^{\alpha}}$. 
			\end{enumerate}
		\end{enumerate}
		\item If $d \notin P$, then;
		\begin{enumerate}
			\item add the minimal primes of $\ann_{R}\dfrac{R^{\alpha}}{((P+dR)R^{\alpha})^{\star U}}$ to $\mathcal{A}_{R^{\alpha}}$,
			\item apply the Katzman-Schwede algorithm to the case $u=g$, and add resulting primes to $\mathcal{A}_{R^{\alpha}}$.
		\end{enumerate}
	\end{enumerate}
	\item Add $P$ to $\mathcal{B}$
\end{enumerate}
Output $\mathcal{A}_{R^{\alpha}}$ and stop.

Since all the operations used in the above algorithm are defined for localizations of $R$, we can apply our algorithm to any localization of $R$ at a prime ideal $\mathfrak{p}$. In the rest of this section, we investigate the relations between output sets of our algorithm applied to $R$ and $R_{\mathfrak{p}}$.

\begin{lemma} \label{lemmalast} Let $\mathcal{R}$ be $R$ or $R_{\mathfrak{p}}$ or $\widehat{R_{\mathfrak{p}}}$. $P$ is a $U$-special ideal of $R$ not contained in $\mathfrak{p}$ if and only if $P\mathcal{R}$ is a $U$-special ideal of $\mathcal{R}$.
\end{lemma}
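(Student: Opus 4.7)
The plan is to pass the defining data of $U$-specialness back and forth between $R$ and $\mathcal{R}$ by exploiting flatness of $\mathcal{R}$ over $R$, together with the fact that both the Frobenius power $(-)^{[p^e]}$ and the $\Ie_e(-)$ operation commute with passage to $\mathcal{R}$; the latter is Lemma \ref{ie}. Since $\star$-closure is built iteratively from $\Ie_e$ and matrix multiplication via the sequence $V_{i+1} = \Ie_e(UV_i) + V_i$, it also commutes with $\mathcal{R}$ --- this is the localization analogue of Lemma \ref{sp2}, established by the same argument.

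For the forward implication, I would write $P = \ann_R(R^\alpha/V)$ with $V \subseteq R^\alpha$ satisfying $UV \subseteq V^{[p^e]}$. Tensoring the exact sequence $0 \to V \to R^\alpha \to R^\alpha/V \to 0$ with the flat algebra $\mathcal{R}$ preserves exactness, giving $V\mathcal{R} \hookrightarrow \mathcal{R}^\alpha$. Because $R^\alpha/V$ is finitely generated and $\mathcal{R}$ is flat, annihilators commute with base change, so $P\mathcal{R} = \ann_\mathcal{R}(\mathcal{R}^\alpha/V\mathcal{R})$. Moreover $(V\mathcal{R})^{[p^e]} = V^{[p^e]}\mathcal{R}$ by the commutation of Frobenius with localization/completion, so $U(V\mathcal{R}) = (UV)\mathcal{R} \subseteq V^{[p^e]}\mathcal{R} = (V\mathcal{R})^{[p^e]}$, exhibiting $P\mathcal{R}$ as $U$-special in $\mathcal{R}$.

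For the converse, suppose $P\mathcal{R} = \ann_\mathcal{R}(\mathcal{R}^\alpha/W)$ with $UW \subseteq W^{[p^e]}$. I would take $V = (PR^\alpha)^{\star^e U}$ computed inside $R^\alpha$; by construction $UV \subseteq V^{[p^e]}$ and $V \supseteq PR^\alpha$. The commutation of $\star$-closure with $\mathcal{R}$ yields $V\mathcal{R} = (P\mathcal{R}^\alpha)^{\star^e U}$, while Lemma \ref{sp1}(2) applied inside $\mathcal{R}$ to the $U$-special ideal $P\mathcal{R}$ gives $\ann_\mathcal{R}(\mathcal{R}^\alpha/V\mathcal{R}) = P\mathcal{R}$. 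Flat base change of annihilators then yields $\ann_R(R^\alpha/V)\,\mathcal{R} = P\mathcal{R}$. Primality of $P$, together with the hypothesis relating $P$ and $\mathfrak{p}$, finally forces $\ann_R(R^\alpha/V) = P$ via the contraction $P\mathcal{R} \cap R = P$, and hence $P$ is $U$-special in $R$.

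The main obstacle is the final contraction in the converse: a priori one only knows $\ann_R(R^\alpha/V)\,\mathcal{R} = P\mathcal{R}$, and extracting equality of the ideals themselves requires that $P$ contract correctly from $P\mathcal{R}$. This rests on the primality of $P$; for $\mathcal{R} = \widehat{R_\mathfrak{p}}$ one composes faithful flatness of the completion over $R_\mathfrak{p}$ with the standard contraction in the localization to obtain $P\mathcal{R} \cap R = P$.
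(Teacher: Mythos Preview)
Your proposal is correct and follows essentially the same route as the paper: both arguments hinge on Lemma~\ref{sp1}(2), which identifies the canonical witness $(PR^\alpha)^{\star U}$ for $U$-specialness, together with Lemma~\ref{sp2} ensuring that $\star$-closure commutes with passage to $\mathcal{R}$. The paper compresses everything into the single chain of equivalences
\[
P\ \text{$U$-special} \;\Longleftrightarrow\; P=\ann_R\!\bigl(R^\alpha/(PR^\alpha)^{\star U}\bigr) \;\Longleftrightarrow\; P\mathcal{R}=\ann_{\mathcal{R}}\!\bigl(\mathcal{R}^\alpha/(P\mathcal{R}^\alpha)^{\star U}\bigr) \;\Longleftrightarrow\; P\mathcal{R}\ \text{$U$-special},
\]
whereas you treat the two directions asymmetrically (arbitrary $V$ for the forward implication, the canonical $(PR^\alpha)^{\star U}$ for the converse) and spell out the flatness and contraction steps that the paper leaves implicit. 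Your explicit discussion of the contraction $P\mathcal{R}\cap R = P$ is in fact a point the paper glosses over entirely; it is needed, and your justification via faithful flatness is the right one.
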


\begin{proof} Let $P$ be a prime ideal of $R$. Then
	\begin{align*}
		P \text{ is } U\text{-special }&\Leftrightarrow P=\ann_{R}R^{\alpha}/(PR^{\alpha})^{\star U}\\
		& \Leftrightarrow P\mathcal{R}=\ann_{\mathcal{R}}\mathcal{R}^{\alpha}/(P\mathcal{R}^{\alpha})^{\star U} \Leftrightarrow P\mathcal{R} \text{ is } U\text{-special }
	\end{align*}
\end{proof}

Our next theorem gives the exact relation between the output sets $\mathcal{A}_{R^{\alpha}}$ and $\mathcal{A}_{R_{\mathfrak{p}}^{\alpha}}$ of our algorithm for $R$ and $R_{\mathfrak{p}}$, respectively.

\begin{theorem} \label{3} Let $U$ be an $\alpha \times \alpha$ matrix with entries in $R$. Our algorithm commutes with localization: if $\mathcal{A}_{R^{\alpha}}$ and $\mathcal{A}_{R_{\mathfrak{p}}^{\alpha}}$ are the output sets of our algorithm for $R$ and $R_{\mathfrak{p}}$, respectively, then
	\[
	\mathcal{A}_{R_{\mathfrak{p}}^{\alpha}}= \{ PR_{\mathfrak{p}} \mid P \in \mathcal{A}_{R^{\alpha}} \text{ and } P \subseteq \mathfrak{p} \}.
	\]
\end{theorem}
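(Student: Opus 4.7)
The strategy is directly analogous to the proof of Theorem \ref{al1}: verify that every operation performed by the algorithm commutes with localization at $\mathfrak{p}$, and then assemble these compatibilities to establish the set-theoretic correspondence. I would proceed by induction on $\alpha$, with the base case $\alpha=1$ being precisely Theorem \ref{al1}, since for $\alpha=1$ the algorithm simply invokes the Katzman--Schwede algorithm.

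For the inductive step, I first record that by Lemma \ref{lemmalast}, for any prime $P \subseteq R$ with $P \subseteq \mathfrak{p}$, the prime $P$ is $U$-special in $R$ if and only if $PR_{\mathfrak{p}}$ is $U$-special in $R_{\mathfrak{p}}$. Moreover, by Lemma \ref{ie}, $\mathcal{K} R_{\mathfrak{p}}$ coincides with the stable value $\mathcal{K}_{\mathfrak{p}}$ defined for $R_{\mathfrak{p}}$, so the non-degeneracy condition $\mathcal{K} \nsubseteq P R^{\alpha}$ is equivalent to $\mathcal{K}_{\mathfrak{p}} \nsubseteq P R_{\mathfrak{p}}^{\alpha}$ whenever $P \subseteq \mathfrak{p}$. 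Hence the characterisation of the target set is preserved under localization.

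Next I would traverse the algorithm step by step and invoke the compatibility results: the $\star$-closure commutes with localization by Lemma \ref{sp2}; the $\Ie_{e}(-)$-operation commutes with localization by Lemma \ref{ie}; annihilators, colon ideals, Frobenius powers, primary decomposition, and extraction of minimal primes all commute with localization over the Noetherian ring $R$; the Katzman--Schwede algorithm invoked in steps 2(a)(i) and 2(b)(ii) commutes with localization by Theorem \ref{al1}; and the recursive invocations on smaller matrices $U_{0}$ and $U_{3}$ commute with localization by the inductive hypothesis applied with parameter $\alpha-1$. These together show that every single intermediate ideal produced in the execution of the algorithm on $R$, when extended to $R_{\mathfrak{p}}$, coincides with the corresponding ideal produced by the algorithm run on $R_{\mathfrak{p}}$.

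There remains one delicate point, which I expect to be the main obstacle: the algorithm selects non-canonical auxiliary data, namely elements $a, a_{1}, g, d$, matrices $V, X$, and exponents $\mu, \nu$. A priori, different runs may pick different data, and data chosen in $R$ may fail to remain usable in $R_{\mathfrak{p}}$ (and vice versa). The way around this is to appeal to Theorem \ref{last4}, which guarantees that \emph{any} valid choice produces a finite list containing all $U$-special primes minimally containing the current $P$; hence the \emph{output sets} (though not the intermediate data) are intrinsic. For one direction, choices made in $R$ descend to $R_{\mathfrak{p}}$ by construction, possibly after localising the involved invertible matrices; for the converse direction, an element of $R_{\mathfrak{p}}$ with appropriate properties can always be cleared of denominators to give a suitable element of $R$ (multiplied by an element outside $\mathfrak{p}$, which is invertible in $R_{\mathfrak{p}}$ anyway).

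Combining these observations, I conclude by induction on the depth of recursion of the algorithm (bounded by $\dim R$ as in the proof of Theorem \ref{1}). Both algorithms initialise at $\{0\}$, and at each iteration the correspondence $P \leftrightarrow PR_{\mathfrak{p}}$ matches the $U$-special primes of $R$ contained in $\mathfrak{p}$ and minimally containing some $P' \in \mathcal{B}$ with the $U$-special primes of $R_{\mathfrak{p}}$ minimally containing $P'R_{\mathfrak{p}}$. A routine induction on iteration count then yields the claimed equality $\mathcal{A}_{R_{\mathfrak{p}}^{\alpha}} = \{PR_{\mathfrak{p}} \mid P \in \mathcal{A}_{R^{\alpha}},\ P \subseteq \mathfrak{p}\}$.
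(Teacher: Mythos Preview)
Your proposal is correct and follows essentially the same approach as the paper: induction on $\alpha$ with base case Theorem~\ref{al1}, then a step-by-step verification that every operation in the algorithm (the $\Ie_e$-operation, $\star$-closure, annihilators, Frobenius powers, minimal primes, recursive calls) commutes with localization, concluding via Lemma~\ref{lemmalast}. The one minor difference in emphasis is that the paper handles the non-canonical auxiliary data $(a, a_1, g, V, X, \mu, \nu)$ by explicitly checking that the \emph{same} choices made over $R$ remain valid over $R_{\mathfrak{p}}$ (e.g.\ $a\notin P \Leftrightarrow a\notin PR_{\mathfrak{p}}$, the same matrix $X$ with entries in $R_a$ serves in $(R_{\mathfrak{p}})_a$, the same $\nu$ works, etc.), whereas you additionally invoke Theorem~\ref{last4} to argue the output is intrinsic regardless of choices; both routes are valid and yours is arguably slightly more robust.
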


Before proving our claim we need a remark which we will use it in step 2. of the proof.

\begin{remark}
	Keeping the notations of above theorem, for any prime ideal $P$ of $R$, and any submodule $K$ of $R^{\alpha}$ we have the property that $K \subseteq PR^{\alpha} \Leftrightarrow K_{\mathfrak{p}} \subseteq PR_{\mathfrak{p}}^{\alpha}$. We already know that $K \subseteq PR^{\alpha}$ implies $K_{\mathfrak{p}} \subseteq PR_{\mathfrak{p}}^{\alpha}$. For the converse, suppose the contrary that there is an element $k=(k_{1}, \dots ,k_{\alpha})^{t} \in K\setminus PR^{\alpha}$ where $k_{i} \in R\setminus P$ for some $i$. Then there exists an element $s \in R\setminus \mathfrak{p}$ such that $sk \in PR^{\alpha}$, i.e. $sk_{i} \in P$. Since $P$ is prime, $k_{i} \in P$ or $s \in P$, which is impossible. Therefore, $K_{\mathfrak{p}} \subseteq PR_{\mathfrak{p}}^{\alpha}$ implies that $K \subseteq PR^{\alpha}$.
\end{remark}

\begin{proof} By Theorem \ref{al1}, the Katzman-Schwede Algorithm commutes with localization. Therefore, we can, and do, assume $\alpha >1$. Let $P$ be the prime ideal of $R$ in the initial step of our algorithm, and $R_{\mathfrak{p}}$ be a localization of $R$ at a prime ideal $\mathfrak{p}$ containing $P$. Since the stable value of $\{\Ie_{e}(U^{[p^{e-1}]}U^{[p^{e-2}]} \cdots UR_{\mathfrak{p}}^{\alpha})\}_{e\geq1}$ is equal to the stable value of $\mathcal{K}_{\mathfrak{p}}=\{\Ie_{e}(U^{[p^{e-1}]}U^{[p^{e-2}]} \cdots UR^{\alpha})R_{\mathfrak{p}}\}_{e\geq1}$. we have $\mathcal{K} \subseteq PR^{\alpha} \Leftrightarrow \mathcal{K}_{\mathfrak{p}}\subseteq PR_{\mathfrak{p}}^{\alpha}$. Since $\star$-closure commutes with localization, whenever we write $(PR^{\alpha})^{\star U}$ as the image of a matrix $M$ with entries in $R$, we can write $(PR_{\mathfrak{p}}^{\alpha})^{\star U}=(PR^{\alpha})^{\star U}R_{\mathfrak{p}}$ as the image of same matrix but working in $R_{\mathfrak{p}}$.
	\begin{enumerate}
		\item Since $a \notin P \Leftrightarrow a \notin PR_{\mathfrak{p}}$, $a$ is an entry of $M$ not in $(PR_{\mathfrak{p}}^{\alpha})^{\star U}$. Then, by Lemma \ref{sp2}, step 1.(a) commutes with localization. However, for step 1.(b), we can take the same matrix $X$ with entries in $R_{a}$ but working in $R_{\mathfrak{p}}$. Then while we do operations in $R_{\mathfrak{p}}$, we see that $e_{\alpha} \in X(\im M)_{a}\cap R^{\alpha}$ implies that $e_{\alpha} \in (X(\im M)_{a}\cap R^{\alpha})R_{\mathfrak{p}}\cong X(\im M)_{a}\cap R_{\mathfrak{p}}^{\alpha}$. Also $U_{1}=a^{\nu}X^{[p]}UX^{-1}$ has entries in $R$ (and in $R_{\mathfrak{p}}$) for the same $\nu \gg 0$. Therefore, we end up with the same matrix $U_{0}$.
		\item  We first note  that $(PR^{\alpha})^{\star U}=PR^{\alpha} \Leftrightarrow (PR_{\mathfrak{p}}^{\alpha})^{\star U}=PR_{\mathfrak{p}}^{\alpha}$. Therefore, if $(PR_{\mathfrak{p}}^{\alpha})^{\star U}=PR_{\mathfrak{p}}^{\alpha}$, we can have the same construction working in $R_{\mathfrak{p}}$, i.e., we can take $a_{1} \in R_{\mathfrak{p}}\setminus PR_{\mathfrak{p}}$, $g \in ((PR_{\mathfrak{p}})^{[p]}:PR_{\mathfrak{p}})$, $\alpha \times \alpha$ matrix $V$ for the same $\mu \gg 0$ such that $a_{1}^{\mu}U=gV$ modulo $(PR_{\mathfrak{p}})^{[p]}$ and compute $d=\det V$.
		\begin{enumerate}
			\item For any $r \in R$, we have the property that $r \in P \Leftrightarrow r \in PR_{\mathfrak{p}}$. Thus, if $d \in PR_{\mathfrak{p}}$, then we can have the same construction again, and so we can take $a_{2} \in R_{\mathfrak{p}}\setminus PR_{\mathfrak{p}}$ and the same invertible matrix $X$ with entries in $R_{a_{2}}$ (and in $(R_{\mathfrak{p}})_{a_{2}}\cong (R_{a_{2}})_{\mathfrak{p}}$) such that the last column of $UX^{-1}$ is zero, working in $R_{\mathfrak{p}}$. We also can take the same $\nu \gg 0$ such that the entries of $U_{1}=(a_{1}a_{2})^{\nu} X^{[p]}UX^{-1}$ are in $R$ (and in $R_{\mathfrak{p}}$), and $U_{0}$ to be the same matrix. In addition, since $\Ie_{e}(-)$ operation commutes with localization, if we do calculations in $R_{\mathfrak{p}}$, then the stable value of 
			\[
			\{\Ie_{e}(U_{0}^{[p^{e-1}]}U_{0}^{[p^{e-2}]} \cdots U_{0}R_{\mathfrak{p}})\}_{e>0}
			\]
			is going to equal to the stable value of  $$\{\Ie_{e}(U_{0}^{[p^{e-1}]}U_{0}^{[p^{e-2}]} \cdots U_{0}R)R_{\mathfrak{p}}\}_{e>0}$$ which is $\mathcal{K}_{0}R_{\mathfrak{p}}$. Now, since $\mathcal{K}_{0} \subseteq PR^{\alpha -1} \Leftrightarrow \mathcal{K}_{0}R_{\mathfrak{p}} \subseteq PR_{\mathfrak{p}}^{\alpha -1}$, we can do next:
			\begin{enumerate}
				\item Working in $R_{\mathfrak{p}}$, if $\mathcal{K}_{0}R_{\mathfrak{p}} \subseteq PR_{\mathfrak{p}}^{\alpha -1}$ we can write the last row of the matrix $U_{1}^{[p^{e-1}]}U_{1}^{[p^{e-2}]} \cdots U_{1}$ as $(g_{1}, \dots ,g_{\alpha -1}, 0)$.
				\item Working in $R_{\mathfrak{p}}$, if $\mathcal{K}_{0}R_{\mathfrak{p}} \nsubseteq PR_{\mathfrak{p}}^{\alpha -1}$, we can apply our algorithm recursively to $U_{0}$ and find all prime ideals which contain $PR_{\mathfrak{p}}$ minimally and denote their intersection with $\bar{\tau}$, which is $\tau R_{\mathfrak{p}}$, as we have showed all steps of algorithm commute with localization. Then we have
				\[
				\Ie_{1}(U_{1}\bar{l}(\bar{\tau} \mathcal{K}_{0}R_{\mathfrak{p}}))^{\star U_{1}}=(\Ie_{1}(U_{1}l(\tau K))^{\star U_{1}})R_{\mathfrak{p}},
				\] 
				where $\bar{l}:R_{\mathfrak{p}}^{\alpha -1} \rightarrow R_{\mathfrak{p}}^{\alpha -1}\oplus R_{\mathfrak{p}}$ is the extension map induced by $l$.
			\end{enumerate}   
		\end{enumerate}
	\end{enumerate}
	All other steps are similar to previous steps, and so all steps of our algorithm commute with localization.
	
	Since our algorithm commutes with localization, by Lemma \ref{lemmalast}, the output set $\mathcal{A}_{R_{\mathfrak{p}}^{\alpha}}$ is the set of all $U$-special prime ideals of $R_{\mathfrak{p}}$, and hence, $$\mathcal{A}_{R_{\mathfrak{p}}^{\alpha}}= \{ PR_{\mathfrak{p}} \mid P \in \mathcal{A}_{R^{\alpha}} \text{ and } P \subseteq \mathfrak{p} \}.$$
\end{proof}

Let $U$ be an $\alpha\times\alpha$ matrix with entries in $R$, and let $\mathcal{A}_{R^{\alpha}}$ and $\mathcal{A}_{S^{\alpha}}$ be the output sets of our algorithm for $R$ and $S$, respectively. Let $P$ be a $U$-special prime ideal of $R$, i.e. $P\in\mathcal{A}_{R^{\alpha}}$. Since $PS$ is not always a prime ideal of $S$, we do not have a relation between $\mathcal{A}_{R^{\alpha}}$ and $\mathcal{A}_{S^{\alpha}}$ like in Theorem \ref{3}. However, by Lemma \ref{lemmalast}, we can say that the minimal prime ideals of $PS$ are in $\mathcal{A}_{S^{\alpha}}$. Therefore, the set of minimal prime ideals of elements from $\{ PS \mid P\in\mathcal{A}_{R^{\alpha}} \}$ is contained in $\mathcal{A}_{S^{\alpha}}$.

\section{An Application to Lyubeznik's F-modules}
\label{section:An Application to Lyubeznik's F-modules}
In this section, we investigate the connections between special ideals and local cohomology modules using Lyubeznik's theory of $F$-finite $F$-modules.

By Example \ref{t3}, the $i$-th local cohomology module of $R$ with respect to an ideal $I$ is an $F$-finite $F$-module and there exist a finitely generated module $M$ with an injective map $\beta:M\rightarrow F_{R}(M)$ such that
\[
H_{I}^{i}(R)=\varinjlim(M\xrightarrow{\beta}F_{R}(M) \xrightarrow{F_{R}(\beta)} F_{R}^{2}(M) \xrightarrow{F_{R}^{2}(\beta)}\cdots)
\]
where $\beta : M \rightarrow F_{R}(M)$ is a root morphism. Since $M$ is finitely generated, we also have $M \cong \coker A=R^{\alpha}/\im A$ for some matrix $A$ with entries in $R$. Hence,
\[
H_{I}^{i}(R)\cong\varinjlim(\coker A \xrightarrow{U} \coker A^{[p]}\rightarrow \cdots)
\]
for some $\alpha \times \alpha$ matrix $U$ with entries in $R$ such that $U\im A \subseteq \im A^{[p]}$. Furthermore, $U$ defines an injective map on $\coker A$, since $\beta$ is a root morphism.

\begin{remark} \label{l2} Following \cite[Section 4]{L1}, if $(R,\mathfrak{m})$ is a local ring, for any $F$-finite $F$-module $\mathcal{M}$, there exists a smallest $F$-submodule $\mathcal{N}$ of $\mathcal{M}$ with the property that $\dim_{R}\supp\mathcal{M}/\mathcal{N}=0$. Hence, $\mathcal{M}/\mathcal{N}\cong E^{k}$ as $R$-modules for some $k\in\N$, where $E$ is the injective hull of the residue field of $R$.
\end{remark}

\begin{definition}\label{crk}
	If $R$ is local, we define the \index{corank}corank of an $F$-finite $F$-module $\mathcal{M}$ the number $k$ in Remark \ref{l2}, and denote it by $\crk\mathcal{M} =k$. 
\end{definition}

In Section 4 of \cite{L1}, Lyubeznik uses the theory of corank to shed more light on the notion of $F$-depth of a scheme in characteristic $p$, which is analogous to the notion of DeRham depth of a scheme in characteristic $0$. Following \cite[Section 4]{L1}, in equicharacteristic $0$ one can interpret the DeRham depth in terms of closed points only. Proposition 4.14 in \cite{L1} shows that in characteristic $p$ we can not interpret the $F$-depth of a scheme $Y$ in terms of closed points only. To show this Lyubeznik proves that there are only finitely many prime ideals $P$ of $A$ such that $\crk(H_{IA_{P}}^{i}(A_{P})) \neq 0$. Here  $Y=\spec B$, where $B$ is a finitely generated algebra over a regular local ring $S$,  $A=S[x_{1},\cdots,x_{n}]$ and $I$ is the kernel of the surjection $A\rightarrow B$. Our next theorem not only reproves this result but also gives us an effective way to compute desired prime ideals.

\begin{theorem}\label{m1}
	Let $I$ be an ideal of $R$ and $P \subset R$ a prime ideal. If $H_{IR_{P}}^{i}(R_{P})$ has non zero corank then $P$ is in the output of our algorithm introduced in section \ref{section:Katzman-Zhang Algorithm}, i.e.
	\[
	\crk(H_{IR_{P}}^{i}(R_{P})) \neq 0 \Rightarrow P \in \mathcal{A}_{R^{\alpha}}.
	\]
	for some $\alpha\times\alpha$ matrix $U$ with entries in $R$.	
\end{theorem}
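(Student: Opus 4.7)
The plan is to choose $U$ to be the matrix of a root morphism of $H_I^i(R)$ and then to show that the hypothesis $\crk(H_{IR_P}^i(R_P))\neq 0$ forces $P$ to be a $U$-special prime of $R$ satisfying the non\-degeneracy condition $\mathcal{K}\nsubseteq PR^\alpha$, so that the algorithm of Section \ref{section:Katzman-Zhang Algorithm} is guaranteed to list $P$ in $\mathcal{A}_{R^\alpha}$.

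By Example \ref{t3}, $H_I^i(R)$ admits a root morphism $\beta:M\to F_R(M)$ with $M\cong\coker A=R^\alpha/\im A$; the map $\beta$ is represented by an $\alpha\times\alpha$ matrix $U$ satisfying $U\im A\subseteq\im A^{[p]}$ and injective on $M$, and this is the matrix to which the algorithm is applied. Localizing at $P$ yields $\mathcal{M}_P:=H_{IR_P}^i(R_P)\cong\varinjlim_e F_{R_P}^e(M_P)$ with the same matrix $U$, now viewed over $R_P$.

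Next, assume $\crk(\mathcal{M}_P)=k\neq 0$ and use Remark \ref{l2} and Definition \ref{crk} to pick the smallest $F$-submodule $\mathcal{N}\subseteq\mathcal{M}_P$ with $\mathcal{M}_P/\mathcal{N}\cong E^k$, where $E=E_{R_P}(R_P/PR_P)$. The central step is to use the standard correspondence between $F$-submodules of $\mathcal{M}_P$ and $U$-compatible submodules of the root (valid because the root morphism is injective and $F_{R_P}$ is exact by regularity) to produce a submodule $V\subseteq R_P^\alpha$ containing $(\im A)R_P$ with $UV\subseteq V^{[p]}$; concretely, $V$ is the preimage in $R_P^\alpha$ of $M_P\cap\mathcal{N}$. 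Since $M_P/(M_P\cap\mathcal{N})$ embeds in the Artinian $E^k$ via the injection $M_P\hookrightarrow\mathcal{M}_P$, it has finite length with $\sqrt{\ann_{R_P}(R_P^\alpha/V)}=PR_P$. Thus $\ann_{R_P}(R_P^\alpha/V)$ is a $U$-special ideal whose radical has $PR_P$ as an associated prime; by Lemma \ref{sp1}(1), $PR_P$ itself is $U$-special in $R_P$, and then Lemma \ref{lemmalast} lifts this to $P$ being $U$-special in $R$.

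For the non\-degeneracy $\mathcal{K}\nsubseteq PR^\alpha$ I argue by contradiction. If $\mathcal{K}\subseteq PR^\alpha$, then Lemma \ref{ie} gives $\mathcal{K}R_P\subseteq PR_P^\alpha$, so $U^{[p^{e-1}]}\cdots U R_P^\alpha\subseteq P^{[p^e]}R_P^\alpha$ for $e\gg 0$. Applying Lemma \ref{sp1}(1) once more, one obtains from $V$ a submodule $V'\supseteq V$ with $\ann_{R_P}(R_P^\alpha/V')=PR_P$ exactly; then $V'\supseteq PR_P\cdot R_P^\alpha$ and a direct computation with Frobenius powers gives $V'^{[p^e]}\supseteq P^{[p^e]}R_P^\alpha$. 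Combining inclusions yields $U^{[p^{e-1}]}\cdots U R_P^\alpha\subseteq V'^{[p^e]}$ for large $e$, so the iterated root morphism $R_P^\alpha/V'\to R_P^\alpha/V'^{[p^e]}$ is eventually zero and the associated $F$-module vanishes. Combined with the minimality of $\mathcal{N}$ and the correspondence of $F$-submodules with their root-level data, this forces $\mathcal{M}_P/\mathcal{N}=0$, contradicting $k\neq 0$. Hence $\mathcal{K}\nsubseteq PR^\alpha$, and by the description of the algorithm's output, $P\in\mathcal{A}_{R^\alpha}$.

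The main obstacle is the final step of the non\-degeneracy argument: the vanishing of the $F$-module associated to $V'$ a priori only tells us that the $F$-submodule $\mathcal{N}'$ corresponding to $V'$ equals $\mathcal{M}_P$, and since $\mathcal{N}'\supseteq\mathcal{N}$, this is not immediately incompatible with $\mathcal{N}\subsetneq\mathcal{M}_P$. To close the loop one needs either a careful use of the minimality of $\mathcal{N}$ among $F$-submodules with $0$-dimensional quotient, or a translation of the entire argument to the Artinian side via the functors $\Delta^e$ and $\Psi^e$, where Proposition \ref{art1} identifies the nonvanishing Frobenius on the quotient $E^k$ directly with the nonvanishing of iterated $U$-action modulo $V^{[p^e]}$.
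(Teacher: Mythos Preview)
Your argument follows the paper's proof almost exactly: localize the root presentation of $H_I^i(R)$ at $P$, intersect the minimal $F$-submodule $\mathcal{N}$ with the root to produce $V\subseteq R_P^\alpha$ with $UV\subseteq V^{[p]}$ and $\ann_{R_P}(R_P^\alpha/V)$ a $PR_P$-primary $U$-special ideal, invoke Lemma~\ref{sp1}(1) to conclude $PR_P$ is $U$-special, and then descend to $R$ via the compatibility of the algorithm with localization (the paper cites Theorem~\ref{3} for this last step rather than Lemma~\ref{lemmalast}, but the content is the same).

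Where you diverge is in the treatment of the non\-degeneracy condition $\mathcal{K}\nsubseteq PR^\alpha$. The paper's proof simply passes from ``$PR_P$ is $U_P$-special'' to ``$PR_P\in\mathcal{A}_{R_P^\alpha}$'' without comment, so it does not verify this condition at all. You are right to flag it, and the obstacle you identify is genuine: passing from $V$ to the larger $V'$ with $\ann(R_P^\alpha/V')=PR_P$ and showing that the associated $F$-quotient vanishes only gives $\mathcal{N}'=\mathcal{M}_P$, which does not by itself contradict $\mathcal{N}\subsetneq\mathcal{M}_P$. One way to close this is to observe that if $U_e R_P^\alpha\subseteq V'^{[p^e]}$ then the composite $R_P^\alpha/V'^{[p^j]}\to R_P^\alpha/V'^{[p^{j+e}]}$ is zero for every $j$; combined with the injectivity of the root morphism $\coker A_P\hookrightarrow\mathcal{M}_P$, one checks that the induced map $R_P^\alpha/V'\to\mathcal{M}_P/\mathcal{N}'$ is injective precisely because $\mathcal{N}'\cap\coker A_P$ equals the saturation of $V'/\im A_P$, and chasing this carefully (or, as you suggest, dualizing via $\Psi^e$ and invoking Lemma~\ref{nil1} and Theorem~\ref{T1} on the Artinian side) yields $V'=R_P^\alpha$, contradicting $\ann(R_P^\alpha/V')=PR_P$. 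So your instinct that the Artinian translation is the cleanest route is correct; the paper itself simply leaves this point implicit.
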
 

\begin{proof}
	Since $H_{IR_{P}}^{i}(R_{P}) \cong R_{p}\otimes_{R} H_{I}^{i}(R)$, we have
	\[
	H_{IR_{P}}^{i}(R_{P}) \cong \varinjlim(\coker A_{P} \xrightarrow{U_{P}} \coker A_{P}^{[p]}\rightarrow \cdots)
	\]
	where $A_{P}$ and $U_{P}$ are localizations of $A$ and $U$, respectively. We also have that $U_{P}$ defines an injective map on $\coker A_{P}$ since $U$ defines a root morphism for $H_{I}^{i}(R)$.
	
	$\crk(H_{IR_{P}}^{i}(R_{P})) \neq 0$ implies that there exists a proper $F_{R_{P}}$-submodule $\mathcal{N}$ of $H_{IR_{P}}^{i}(R_{P})$ such that $\dim_{R_{P}}\supp (H_{IR_{P}}^{i}(R_{P})/\mathcal{N})=0$. Since $H_{IR_{P}}^{i}(R_{P})$ is $F_{R_{P}}$-finite, we have
	\[
	\mathcal{N}= \varinjlim (N \rightarrow F_{R_{P}}(N) \rightarrow F_{R_{P}}^{2}(N) \rightarrow \cdots)
	\]
	where $N=\mathcal{N} \cap \coker A_{P}$ is an $R_{P}$-submodule of $\coker A_{P}$. Thus, $N \cong V/\im A_{P}$ for some submodule $V \subseteq R_{P}^{\alpha}$ such that $U_{P}V \subseteq V^{[p]}$. Then 
	\begin{align*} 
		H_{IR_{P}}^{i}(R_{P})/\mathcal{N} &\cong \varinjlim(\coker A_{P}/N \xrightarrow{U_{P}} F_{R_{P}}(\coker A_{P}/N) \rightarrow \cdots) \\
		&\cong \varinjlim(R_{P}^{\alpha}/V \xrightarrow{U_{P}} R_{P}^{\alpha}/V^{[p]} \rightarrow \cdots ). 
	\end{align*}
	Furthermore,
	\begin{align*}
		\dim_{R_{P}}\supp (H_{IR_{P}}^{i}(R_{P})/\mathcal{N})=0 &\Rightarrow \ass(H_{IR_{P}}^{i}(R_{P})/\mathcal{N})=\{PR_{P}\} \\ &\Rightarrow \ass (R_{P}^{\alpha}/V)=\{PR_{P}\} \\ &\Rightarrow \ann_{R_{P}}(R_{P}^{\alpha}/V) \text{ is $PR_{P}$-primary}
	\end{align*}
	Therefore, $\ann_{R_{P}}(R_{P}^{\alpha}/V)$ is $U_{P}$-special and so is $PR_{P}$ by Lemma \ref{sp1}, because it is the only minimal prime ideal of $\ann_{R_{P}}(R_{P}^{\alpha}/V)$, i.e. $PR_{P} \in \mathcal{A}_{R_{P}^{\alpha}}$. Then by Theorem \ref{3}, $P \in \mathcal{A}_{R^{\alpha}}$
\end{proof}

\begin{corollary} \label{m2} $\mathcal{C}_{R}:=\{P \in \mathcal{A}_{R^{\alpha}} \mid (\im A_{P}+PR_{P}^{\alpha})^{\star U_{P}}\neq R_{P}^{\alpha}\}$ is the set of all prime ideals of $R$ which satisfy $\crk(H_{IR_{P}}^{i}(R_{P}))\neq 0$
\end{corollary}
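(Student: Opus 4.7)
The plan is to prove the set equality via a two-way inclusion, with the key idea for the harder direction being to upgrade the $PR_P$-primary annihilator supplied by Theorem \ref{m1} into an annihilator equal to $PR_P$ itself, using the associated-prime construction already recorded in the proof of Lemma \ref{sp1}(1).

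For the inclusion $\mathcal{C}_R \subseteq \{P : \crk \ne 0\}$, given $P \in \mathcal{C}_R$, set $W := (\im A_P + PR_P^\alpha)^{\star U_P}$. This submodule is proper, $U_P$-compatible, and contains $\im A_P$, so it defines a proper $F$-submodule $\mathcal{N}_W$ of $\mathcal{M} := H^i_{IR_P}(R_P)$. Its root $R_P^\alpha/W$ is killed by $PR_P$ because $W \supseteq PR_P^\alpha$, making it a finite-dimensional $\kappa$-vector space with support $\{PR_P\}$. Since Frobenius preserves support over the regular ring $R_P$, the entire $\mathcal{M}/\mathcal{N}_W$ has $0$-dimensional support, so $\crk(\mathcal{M}) \ge 1$ by Definition \ref{crk}.

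For the reverse inclusion, suppose $\crk(\mathcal{M}) \ne 0$. Theorem \ref{m1} already yields $P \in \mathcal{A}_{R^\alpha}$, so it remains to show $(\im A_P + PR_P^\alpha)^{\star U_P} \ne R_P^\alpha$. The proof of Theorem \ref{m1} produces $V \subsetneq R_P^\alpha$ with $\im A_P \subseteq V$, $U_P V \subseteq V^{[p]}$, and $\ann_{R_P}(R_P^\alpha/V)$ a $PR_P$-primary ideal. Since $PR_P$ is the unique associated prime of this ideal, I would pick $r \in R_P$ such that $(\ann_{R_P}(R_P^\alpha/V) :_{R_P} r) = PR_P$, and set $W := (V :_{R_P^\alpha} r)$ exactly as in the proof of Lemma \ref{sp1}(1). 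Then $W \supseteq V \supseteq \im A_P$, and that lemma's proof shows $\ann_{R_P}(R_P^\alpha/W) = PR_P$ and $U_P W \subseteq W^{[p]}$. Consequently $PR_P \cdot R_P^\alpha = PR_P^\alpha \subseteq W$, so $W$ is a $U_P$-compatible submodule containing $\im A_P + PR_P^\alpha$ and is proper (because its annihilator is $PR_P \ne R_P$). Therefore $(\im A_P + PR_P^\alpha)^{\star U_P} \subseteq W \subsetneq R_P^\alpha$, giving $P \in \mathcal{C}_R$.

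The main obstacle I expect is precisely the gap between having a $PR_P$-primary annihilator on the one hand, and the containment $PR_P^\alpha \subseteq W$ needed on the other. A naive attempt such as passing to $V + PR_P^\alpha$ fails because this sum need not be $U_P$-compatible, and its star-closure could in principle be $R_P^\alpha$. The insight that unblocks the argument is that the associated-prime construction from Lemma \ref{sp1}(1) converts the primary annihilator into an annihilator equal to $PR_P$ while simultaneously preserving $U_P$-compatibility and the containment of $\im A_P$; this is exactly the bridge that makes the direction $\crk \ne 0 \Rightarrow P \in \mathcal{C}_R$ work.
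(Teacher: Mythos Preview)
For the direction $\crk \neq 0 \Rightarrow P \in \mathcal{C}_R$, your argument matches the paper's, and in fact is more explicit: the paper's proof simply asserts the existence of a proper $U_P$-compatible $W \supseteq \im A_P$ with $\ann_{R_P}(R_P^\alpha/W) = PR_P$, whereas you correctly manufacture it from the $PR_P$-primary annihilator produced inside Theorem~\ref{m1} via the associated-prime construction $(V:_{R_P^\alpha} r)$ of Lemma~\ref{sp1}(1). The conclusion $(\im A_P + PR_P^\alpha)^{\star U_P} \subseteq W \subsetneq R_P^\alpha$ then follows exactly as you wrote.

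The reverse direction, $P \in \mathcal{C}_R \Rightarrow \crk \neq 0$, is not argued in the paper's own proof at all; you attempt it, but there is a genuine gap. From $W := (\im A_P + PR_P^\alpha)^{\star U_P} \subsetneq R_P^\alpha$ you assert that the $F$-submodule $\mathcal{N}_W$ generated by $W/\im A_P$ is a \emph{proper} $F$-submodule of $\mathcal{M} = H^i_{IR_P}(R_P)$. This does not follow from properness of $W$ alone. Because the root morphism is injective, one checks that $\mathcal{N}_W = \mathcal{M}$ precisely when for every $v \in R_P^\alpha$ some iterate $U_P^{[p^{e-1}]}\cdots U_P\,v$ lies in $W^{[p^e]}$, equivalently when the stable value $\mathcal{K}_P$ of Theorem~\ref{2} is contained in $W$. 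Your hypothesis $P \in \mathcal{A}_{R^\alpha}$ only gives $\mathcal{K}_P \nsubseteq PR_P^\alpha$, which is strictly weaker since $W \supseteq PR_P^\alpha$. Absent an argument ruling out $\mathcal{K}_P \subseteq W$, you cannot conclude $\mathcal{M}/\mathcal{N}_W \neq 0$, and the deduction $\crk(\mathcal{M}) \geq 1$ is unjustified. To close this direction you would need to establish $\mathcal{K}_P \nsubseteq (\im A_P + PR_P^\alpha)^{\star U_P}$ under the stated hypotheses.
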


\begin{proof}
	By Theorem \ref{m1}, $\crk(H_{IR_{P}}^{i}(R_{P}))\neq 0$ implies that $PR_{P}$ is a $U_{P}$-special prime ideal of $R_{P}$ such that $PR_{P}=\ann_{R_{P}}(R_{p}^{\alpha}/W)$ for some proper submodule $W\subset R_{p}^{\alpha}$ where $\im A_{P} \subseteq W$ and $A_{P}$ as in Theorem \ref{m1}. Since $(\im A_{P}+PR_{P}^{\alpha})^{\star U_{P}}$ is the smallest submodule of $R_{P}^{\alpha}$ which satisfies $PR_{P}=\ann_{R_{P}}(R_{p}^{\alpha}/(\im A_{P}+PR_{P}^{\alpha})^{\star U_{P}})$, if $(\im A_{P}+PR_{P}^{\alpha})^{\star U_{P}}=R_{P}^{\alpha}$, then we have a contradiction with the existence of $W$. Hence, the set of primes ideals of $R$ which satisfy $\crk(H_{IR_{P}}^{i}(R_{P})) \neq 0$ is the set $\{P \in \mathcal{A}_{R^{\alpha}} \mid (\im A_{P}+PR_{P}^{\alpha})^{\star U_{P}}\neq R_{P}^{\alpha}\}$.
\end{proof}

Corollary \ref{m2} says that if we want to compute the prime ideals of $R$ which satisfy $\crk(H_{IR_{P}}^{i}(R_{P}))\neq 0$, we pick an element $P\in \mathcal{A}_{R^{\alpha}}$ and need to check whether $(\im A_{P}+PR_{P}^{\alpha})^{\star U_{P}}$ is equal to $R_{P}^{\alpha}$.

\subsection*{Acknowledgements}  I would like to thank my supervisor Mordechai Katzman for his support, guidance and patience throughout this project. Without his helpful
advice and insights this preprint would not be possible.

\end{document}